\documentclass[12pt]{amsart}
\usepackage[margin=1.2in]{geometry}
\usepackage{graphicx,latexsym}
\usepackage{amsfonts, amssymb, amsmath, amsthm}

\newtheorem{theorem}{Theorem}[section]

\newtheorem{lemma}[theorem]{Lemma}
\newtheorem{corollary}[theorem]{Corollary}

\theoremstyle{definition}
\newtheorem{definition}[theorem]{Definition}

\theoremstyle{remark}
\newtheorem{remark}[theorem]{Remark}
\newtheorem{example}[theorem] {Example}


\begin{document}

\title[Boundary values of holomorphic functions and heat kernel method]{Boundary values of holomorphic functions and heat kernel method in translation-invariant distribution spaces}

\author[P. Dimovski]{Pavel Dimovski}
\address{Faculty of Technology and Metallurgy, Ss. Cyril and Methodius University, Ruger Boskovic 16, 1000 Skopje, Macedonia}
\email{dimovski.pavel@gmail.com}

\author[S. Pilipovi\'{c}]{Stevan Pilipovi\'{c}}
\address{Department of Mathematics and Informatics, University of Novi Sad, Trg Dositeja Obradovi\'ca 4, 21000 Novi Sad, Serbia}
\email {stevan.pilipovic@dmi.uns.ac.rs}

\author[J. Vindas]{Jasson Vindas}
\address{Department of Mathematics, Ghent University, Krijgslaan 281 Gebouw S22, 9000 Gent, Belgium}
\email{jvindas@cage.UGent.be}

\thanks{J. Vindas gratefully acknowledges support by Ghent University, through the BOF-grant 01N010114.}

\subjclass[2010]{Primary  46F20. Secondaries 46F15, 32A40}
\keywords{Boundary values of holomorphic functions on tube domains; Analytic representations of distributions; Heat kernel method; Translation-invariant distribution spaces}

\begin{abstract} We study boundary values of holomorphic functions in translation-invariant distribution spaces of type $\mathcal{D}'_{E'_{\ast}}$. New edge of the wedge theorems are obtained. The results are then applied to represent $\mathcal{D}'_{E'_{\ast}}$ as a quotient space of holomorphic functions. We also give representations of elements of $\mathcal{D}'_{E'_{\ast}}$ via the heat kernel method. Our results cover as particular instances the cases of boundary values, analytic representations, and heat kernel representations in the context of the Schwartz spaces $\mathcal{D}'_{L^{p}}$, $\mathcal{B}'$, and their weighted versions.

\end{abstract}

\maketitle

%
%
%

\section{Introduction}

The purpose of this article is to study boundary values of holomorphic functions in tube domains and solutions to the heat equation in the upper half-space in the class of translation-invariant distribution spaces of type $\mathcal{D}'_{E'_{\ast}}$, recently introduced by the authors in \cite{dpv}. These spaces are natural generalizations of the Schwartz spaces $\mathcal D'_{L^p}$ \cite{S} and their weighted versions  \cite{O1990,O}. Our considerations provide extensions of the classical theory of boundary values and analytic representations in weighted $\mathcal D'_{L^p}$ spaces, but also lead to many new results for these particular cases. We will prove a heat kernel characterization of $\mathcal{D}'_{E'_{\ast}}$, which appears to be new even for $\mathcal D'_{L^p}$.

The study of boundary values of holomorphic functions in distribution and ultradistribution spaces has shown to be quite important for a deeper understanding of properties of generalized functions which are of great relevance to the theory of PDE \cite{hormander,sato-kawai}. There is a vast literature on the subject, we refer to the book by Carmichael and Mitrovi\'{c} \cite{C-M} and references therein for an account on results concerning boundary values in distribution spaces. 

The representation of the Schwartz spaces $\mathcal{D}'_{L^{p}}$ as boundary values of holomorphic functions has also attracted much attention. See, e.g., the classical works \cite{bengel,l-z61}. More recently \cite{f-g-g}, Fern\'{a}ndez, Galbis, and G\'{o}mez-Collado have obtained various ultradistribution analogs of such results; they also obtained the representation of $\mathcal{D}'_{L^{p}}$ for $p=1,\infty$. All these works basically deal with holomorphic functions in tube domains whose bases are the orthants of $\mathbb{R}^{n}$. In a series of papers \cite{carmichael70,carmichael83,carmichael85,carmichael87}, Carmichael systematically studied boundary values in $\mathcal{D}'_{L^{p}}$ of holomorphic functions defined in more general tubes, namely, tube domains whose bases are open convex cones. The present work makes a thorough analysis of boundary values in the space $\mathcal{D}'_{E'_{\ast}}$. Many of the results we obtain in Sections \ref{boundary values}--\ref{boundary isomorphism} are new or improve earlier results even for the special case $\mathcal{D}'_{E'_{\ast}}=\mathcal{D}'_{L^{p}}$, especially in the non-reflexive cases $p=1$ or $p=\infty$.

In his seminal work \cite{M1990} Matsuzawa introduced the so-called heat kernel method in the theory of generalized
functions. His approach consists in describing distributions and hyperfunctions in terms of solutions to the heat equation fulfilling suitable growth estimates. Several authors have investigated characterizations of many others distribution, ultradistributions, and hyperfunction spaces \cite{CK,KCK,suwa, suwa-yoshino}. Our results from Section \ref{heat} add new information to Matsuzawa's program by obtaining the description of $\mathcal{D}'_{E'_{\ast}}$ via the heat kernel method. In the case of $\mathcal{D}'_{L^{p}}$, this characterization reads as follows: $f\in \mathcal{D}'_{L^{p}}$ if and only if there is a solution $U$ to the heat equation on $\mathbb{R}^{n}\times (0,t_{0})$ such that $\sup_{t\in(0,t_0)} t^{k}\|U(\:\cdot\:,t)\|_{L^{p}}<\infty$ for some $k\geq0$ and $f=\lim_{t\to0^{+}}U(\:\cdot\:,t)$.

The paper is organized as follows. Section \ref{preli} is of preliminary character, we give there a summary of properties of translation-invariant Banach spaces of tempered distributions and their associated distribution spaces $\mathcal{D}'_{E'_{\ast}}$; we also fix the notation concerning tubes and cones. Section \ref{boundary values} is devoted to the study of boundary values of holomorphic functions in $\mathcal{D}'_{E'_{\ast}}$. Our first main result (Theorem \ref{thbvw}) characterizes those holomorphic functions in truncated wedges which have boundary values in $\mathcal{D}'_{E'_{\ast}}$. It is worth pointing out that this result improves earlier knowledge about boundary values in $\mathcal{D}'_{L^{p}}$; in fact, part of our conclusion is strong convergence in $\mathcal{D}'_{L^{p}}$, $1\leq p \leq \infty$. The strong convergence was only known for $1<p<\infty$ and for certain tubes \cite{bengel,carmichael70,carmichael83,carmichael87}. Next, we consider extensions of Carmichael's generalizations of the $H^{p}$ spaces \cite{carmichael83,carmichael85,carmichael87}. In Section \ref{analytic representations} we show that every element of $\mathcal{D}'_{E'_{\ast}}$ admits a representation as a sum of boundary values. We provide in Section \ref{edge of the wedge} new edge of the wedge theorems of Epstein,
Bogoliubov, and Martineau types. Our ideas are then applied in Section \ref{boundary isomorphism} to exhibit an isomorphism between $\mathcal{D}'_{E'_{\ast}}$ and a quotient space of holomorphic functions, this quotient space is constructed in the spirit of hyperfunction theory. The paper concludes with the heat kernel characterization of $\mathcal D'_{E'_*}$ in Section \ref{heat}.
\section{Preliminaries}
\label{preli}

In this section we fix the notation and collect some notions that will be needed in the article. In particular, we give a short overview of properties of translation-invariant Banach spaces of tempered distributions and their associated distribution spaces of type $\mathcal{D}'_{E'_{\ast}}$ introduced in \cite{dpv}.  We use the standard notation from distribution theory \cite{S,V}. The distribution $\check{g}$ denotes the reflection $\check{g}(x)=g(-x)$.
If $h\in\mathbb{R}^{n}$, then $T_{h}$ is the translation operator,  $(T_{h}g)(x)=g(x+h)$. As usual,  
a subspace $Y\subset\mathcal{D}'(\mathbb{R}^{n})$ is called translation-invariant if $T_{h}(Y)= Y$ for all $h\in\mathbb{R}^{n}$. Given a locally convex space $X$, the space of $X$-valued tempered distributions is $\mathcal{S}'(\mathbb{R}^n,X)=L_b(\mathcal{S}(\mathbb{R}^n),X)$, i.e., the space of continuous linear mappings from $\mathcal{S}(\mathbb{R}^n)$ to $X$ equipped with the strong topology. The symbol ``$\hookrightarrow$'' in the expression $Y\hookrightarrow X$ means dense and continuous linear embedding.

\subsection{Tubes and cones} We start by fixing the notation concerning tubes and cones. Let $V\subseteq\mathbb{R}^{n}$ be an open subset. The tube domain
$T^{V}\subseteq \mathbb{C}^{n}$, with base $V$, is defined as
$$
T^{V}=\mathbb{R}^{n}+iV=\left\{x+iy\in \mathbb{C}^{n}:\: x\in\mathbb{R}^{n},\: y\in V \right\}.
$$
We always write $z=x+iy\in\mathbb{C}^{n}$ (and similarly for other complex variables), where $x,y\in\mathbb{R}^{n}$. We employ the notation $d_{V}(y)=\operatorname*{dist}(y,\partial V)$ for $y\in V$. The convex hull of a set $A\subset \mathbb{R}^{n}$ is denoted by $\operatorname{ch}( A)$.

Let $C\subseteq \mathbb{R}^{n}$ be an open cone (with vertex at the origin hereafter). Note that $C$ may be $\mathbb{R}^{n}$.  If $r>0$, we write in short $C(r):=C\cap \{y\in\mathbb{R}^{n}:|y|< r\}$. We denote by $\operatorname*{pr}C$ the intersection of the cone $C$ with the unit sphere of $\mathbb{R}^{n}$. We say that the subcone $C'$ is compact in $C$ and write $C'\Subset C$ if $\overline{\operatorname*{pr}C'}\subset \operatorname*{pr}C$.
It should be noticed that $d_{C}$ is homogeneous of degree 1, namely, $d_{C}(\lambda y)=\lambda d_{C}(y)$, for every $\lambda>0$.
Recall \cite{V} that the conjugate cone of $C$ is defined as
$
C^{\ast}:= \left\{ \xi\in \mathbb{R}^{n}:\ y\cdot \xi \geq 0,\: \forall y \in C\right\}.
$
Since $C$ is open, one actually has $y\cdot \xi>0$, for all $y\in C$ and $\xi\in C^{\ast}$.  For convex cones one has
$$
d_{C}(y)=\min_{\xi\in \operatorname*{pr}C^{\ast}} y\cdot \xi ,  \  \ \ y\in C.
$$
(This equality is well-known \cite[p. 61]{V}.)  The cone $C$ is called acute if $\operatorname*{int}C^{\ast}\neq\emptyset$. Given $a\geq 0$, we denote the closed Euclidean ball (centered at the origin) of radius $a$ as $\overline{B}(a)$.

\subsection{Translation-invariant Banach spaces of tempered distributions}\label{the space E}
Following \cite{dpv}, a Banach space $E$ is called a \emph{translation-invariant Banach space of tempered distributions} if it satisfies the following three conditions:
\begin{itemize}
     \item [$(a)$] $\mathcal{S}(\mathbb{R}^n)\hookrightarrow E\hookrightarrow \mathcal{S}'(\mathbb{R}^n)$.
      \item[$(b)$] $E$ is translation-invariant.
		
        \item [$(c)$] There are $M'>0$ and $\tau\geq 0$ such that
\begin{equation}\label{eqw}
\omega(h):= ||T_{-h}||_{L(E)}\leq M' (1+|h|)^\tau, \ \ \ \mbox{for all } h\in \mathbb{R}^n.
 \end{equation}
\end{itemize}
(Note that the continuity of every $T_{h}:E\to E$ is an immediate consequence of $(a)$, $(b)$, and the closed graph theorem.)

These three axioms imply \cite{dpv} the following important property:
\begin{itemize}
 \item[$(d)$] The mappings $h\mapsto T_hg$ are continuous for each $g\in E$.
 \end{itemize}

Throughout the rest of the article $E$ always stands for a translation-invariant Banach space of tempered distributions. We shall call the function $\omega$, given by (\ref{eqw}), the growth function of the translation group of $E$ (in short: \emph{the growth function of $E$}). Note that $ \omega$ is measurable, $\omega(0)=1$, and $\log \omega$ is subadditive.  
We associate to $E$ the Beurling algebra $L^{1}_{\omega}$, i.e., the Banach algebra of measurable functions $u$ such that $||u||_{1,\omega}:=\int_{\mathbb{R}^{n}}|u(x)| \:\omega(x) dx<\infty$. Recall that the dual of $L^{1}_{\omega}$ is $L^{\infty}_{\omega}$, the Banach space of measurable functions satisfying
$
||u||_{\infty,\omega}:=\operatorname*{ess}\sup_{x\in\mathbb{R}^{n}} |u(x)|/\omega(x)<\infty.$ We have proved in \cite{dpv} that 
the convolution $\ast:\mathcal{S}(\mathbb{R}^n)\times\mathcal{S}(\mathbb{R}^n)\rightarrow \mathcal{S}(\mathbb{R}^n)$ extends to $\ast:L^{1}_{\omega}\times E\rightarrow E$ in such a way that $E$ is a Banach module over the Beurling algebra $L^{1}_{\omega}$, i.e., $ ||u\ast g||_{E}\leq ||u||_{1,\omega}||g||_{E}$.

The dual space $E'$ carries two convolution structures which will play a crucial role in the rest of the article. On the one hand, setting $\check{E}=\{g\in \mathcal{S}'(\mathbb{R}^{n}):\: \check{g}\in E\}$, we clearly obtain a well-defined convolution mapping $\ast:E'\times \check{E}\to L^{\infty}_{\omega}$; the following pointwise estimate obviously holds:
\begin{equation}\label{estimateconv1}
|(f\ast g)(x)|\leq \omega(x)\| f\|_{E'}\|\check{g}\|_{E},  \ \ \ \forall x\in\mathbb{R}^{n}. 
\end{equation}
On the other hand, we can associate the Beurling algebra $L^{1}_{\check{\omega}}$ to $E'$ (recall $\check{\omega}(x)=\omega(-x)$) and the convolution of $f\in E'$ and $u\in L^{1}_{\check{\omega}}$ is  defined via transposition:
$
\left\langle u\ast f,g \right\rangle:= \left\langle f,\check {u}\ast g\right\rangle,$ $g\in E.
$ The space $E'$ then becomes a Banach modulo over $L^{1}_{\check{\omega}}$, that is, 
\begin{equation}
\label{estimateconv2}
||u\ast f||_{E'}\leq ||u||_{1,\check{\omega}}||f||_{E'}.
\end{equation}
 
It is important to notice that, in general, $E'$ is not a translation-invariant Banach space of tempered distributions. Indeed, the properties $(a)$ and $(d)$ may fail for $E'$ (e.g., take $E=L^{1}$). We have introduced in \cite{dpv} the space $E'_{\ast}=L^{1}_{\check{\omega}}\ast E'$. Since the Beurling algebra $L^{1}_{\check{\omega}}$ admits bounded approximation unities, it follows from the Cohen-Hewitt factorization theorem \cite{kisynski} that $E'_{\ast}$ is actually a closed linear subspace of $E'$. Thus, $E'_{\ast}$ inheres the Banach modulo structure over $L^{1}_{\check{\omega}}$.  The Banach space of distributions $E'_{\ast}$ possesses the properties $(b)$, $(c)$, and $(d)$; moreover, we have the explicit description \cite[Prop. 5]{dpv} $E'_{\ast}=\{f\in E': \lim_{h\to 0}\|T_{h}f-f\|_{E'}=0\}$ . One can also show \cite[Thm. 2]{dpv} that if $f\in E'_{\ast}$ and $\phi \in\mathcal{S}(\mathbb{R}^{n})$ is such that $\int_{\mathbb{R}^{n}}\phi(x)dx=1$, then 
\begin{equation}
\label{eqapprox}
\lim_{\varepsilon\to0^{+}}\|f-\phi_{\varepsilon}\ast f\|_{E'}=0,
\end{equation}
where $ \phi_{\varepsilon}(x)=\varepsilon^{-n}\phi \left(x/ \varepsilon \right)$. When $E$ is reflexive, we have proved \cite{dpv} that $E'$ is also a translation-invariant Banach space of tempered distributions and in fact $E'=E'_{\ast}$.

\begin{example} \label{ex1}
Typical examples of $E$ are the $L^{p}$-weighted spaces. Let $\eta$ be a polynomially bounded weight, that is, a measurable function $\eta:\mathbb{R}^n\rightarrow (0,\infty)$ that fulfills the requirement $\eta(x+h)\leq M'\eta(x)(1+|h|)^\tau$, for some $M',\tau>0$. We consider the norms
$
||g||_{p,\eta}=\|g \eta\|_{p}=\|g \eta\|_{L^{p}(\mathbb{R}^{n})}$ for $p\in [1,\infty)$ and $||g||_{\infty,\eta}=\|g/\eta\|_{\infty}
$.
Then the space $L^{p}_{\eta}$ consists of those measurable functions such that $||g||_{p,\eta}<\infty$. One clearly has that $E=L^{p}_{\eta}$ are translation-invariant Banach spaces of tempered distributions for $p\in[1,\infty)$. The case $p=\infty$ is an exception, because the properties $(a)$ and $(d)$ fail for $L^{\infty}_{\eta}$. In view of reflexivity, the space $E'_{\ast}$ corresponding to $E=L^{p}_{\eta^{-1}}$ is $E'_{\ast}=E'=L^{q}_{\eta}$ whenever $1<p<\infty$, where $q$ is the conjugate index to $p$. On the other hand, $E'_{\ast}=UC_{\eta}:= \left\{u\in L^{\infty}_{\eta}: \lim_{h\to0}||T_{h}u-u||_{\infty,\eta}=0 \right\}$ for $E=L^{1}_{\eta}$. The weight function of $L^{p}_{\eta}$ is $\omega(h)= \|T_{h}\eta\|_{\infty,\eta}$ for $p\in[1,\infty)$, while that for $L^{\infty}_{\eta}$ is $\check{\omega}$ (cf. \cite[Prop. 10]{dpv}). Another instance of $E$ is the closed subspace of $L^{\infty}_{\eta}$ given by $E=C_{\eta}=\left\{g\in C(\mathbb{R}^{n}): \: \lim_{|x|\to \infty} g(x)/\eta(x)=0\right\};$ in this case, $E'_{\ast}=L^{1}_{\eta}$. 
\end{example}

\subsection{The distribution space $\mathcal{D}'_{E'_{\ast}}$}
\label{the distribution space}

For every nonnegative integer $N$ define the Banach spaces 
$\mathcal{D}_E^N=\{\varphi\in E:\,\|\varphi\|_{E,N}=\max_{|\alpha| \leq N}\|\varphi^{(\alpha)}\|_E<\infty\}$
and the test function space
$$\mathcal{D}_E=\projlim_{N\rightarrow\infty}\mathcal{D}_E^N.$$
We have shown \cite[Prop. 8]{dpv} that $\mathcal{D}_E$ is a Fr\'{e}chet space of smooth functions and actually the following dense and continuous inclusions hold: 
\begin{equation}\label{eqemb1}
\mathcal{S}(\mathbb{R}^n)\hookrightarrow\mathcal{D}_E\hookrightarrow \mathcal{O}_{C}(\mathbb{R}^{n}) \hookrightarrow \mathcal{E}(\mathbb{R}^n),
\end{equation}
where $\mathcal{O}_{C}(\mathbb{R}^{n})$ stands for the test function space corresponding to the space of convolutors $\mathcal{O}_{C}'(\mathbb{R}^{n})$ of $\mathcal{S}'(\mathbb{R}^n)$. When $E$ is reflexive, one can show that $\mathcal{D}_{E}$ is reflexive as well (cf. \cite[Prop. 9]{dpv}).

We define the distribution space $\mathcal{D}'_{E'_{\ast}}$ as the strong dual of $\mathcal{D}_{E}$. 
When $E$ is reflexive, we write $\mathcal{D}'_{E'}=\mathcal{D}'_{E'_{\ast}}$. By (\ref{eqemb1}), we have the (continuous) inclusions:
\begin{equation}\label{eqemb2}
 \mathcal{E}'(\mathbb{R}^{n})\to\mathcal{O}'_{C}(\mathbb{R}^{n})\to\mathcal{D}'_{E'_{\ast}}\to\mathcal{S}'(\mathbb{R}^{n}).
\end{equation}

The notation $\mathcal{D}'_{E'_{\ast}}=(\mathcal{D}_{E})'$ is motivated by the next structural theorem, which in particular tells that every element of $\mathcal{D}'_{E'_{\ast}}$ is the sum of partial derivatives of elements of $E'_{\ast}$. This fact will be extremely important in the subsequent sections, because, contrary to $E'$ in general, the condition $(d)$ from Subsection \ref{the space E} holds for $E'_{\ast}$. See \cite[Thm. 3 and Cor. 4]{dpv} (and their proofs) for a proof of Theorem \ref{th structure}.

\begin{theorem}[\cite{dpv}]\label{th structure}
Let $B'\subset \mathcal{S}'(\mathbb{R}^n)$. The following statements are equivalent:
\begin{itemize}
\item [$(i)$] $B'$ is a bounded subset of $\mathcal{D}'_{E'_{\ast}}$.

\item [$(ii)$] $\psi \ast B'=\{\psi \ast f:\: f\in B'\}$ is a bounded subset of $E'_{\ast}$ for each $\psi\in \mathcal{S}(\mathbb{R}^n)$.

\item [$(iii)$] There are $M>0$ and $N\in\mathbb{N}$ such that every $f\in B'$ admits a representation
\begin{equation}\label{eq:representation}
 f=\sum_{|\alpha|\leq N}f_{\alpha}^{(\alpha)}
\end{equation}
 with continuous functions $f_{\alpha}\in E'_{\ast}\cap UC_{\omega}\subset E'\cap L^{\infty}_{\omega}$ satisfying the uniform bounds $||f_{\alpha}||_{E'}<M$ and $||f_{\alpha}||_{\infty,\omega}<M$. 
\end{itemize}
Furthermore, the functions in the representation (\ref{eq:representation}) can be chosen to have the form
$f_{\alpha}= f\ast \check{\varrho}_{\alpha}$, where the $\varrho_{\alpha}\in E$ are continuous functions of compact support that depend only on the set $B'$. 
\end{theorem}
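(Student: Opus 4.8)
The plan is to establish the cycle $(iii)\Rightarrow(i)\Rightarrow(ii)\Rightarrow(iii)$, treating the passage to $(iii)$ as the substantive part. Since $\mathcal{D}_E$ is a Fr\'echet space it is barrelled, so by Banach--Steinhaus $B'$ is bounded in $\mathcal{D}'_{E'_{\ast}}$ if and only if it is equicontinuous; that is, $(i)$ is equivalent to the existence of $C>0$ and $N\in\mathbb{N}$ with $|\langle f,\varphi\rangle|\le C\|\varphi\|_{E,N}$ for all $f\in B'$ and all $\varphi\in\mathcal{D}_E$. Granting this, $(iii)\Rightarrow(i)$ is immediate: if $f=\sum_{|\alpha|\le N}f_{\alpha}^{(\alpha)}$ with $\|f_{\alpha}\|_{E'}<M$, then $|\langle f,\varphi\rangle|\le\sum_{|\alpha|\le N}\|f_{\alpha}\|_{E'}\|\varphi^{(\alpha)}\|_{E}\le C_{N,n}M\,\|\varphi\|_{E,N}$.

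For $(i)\Rightarrow(ii)$ I would define, for $\psi\in\mathcal{S}(\mathbb{R}^{n})$, the convolution $\psi\ast f\in\mathcal{D}'_{E'_{\ast}}$ by $\langle\psi\ast f,\varphi\rangle=\langle f,\check\psi\ast\varphi\rangle$. Using the $L^{1}_{\omega}$-module estimate $\|u\ast g\|_{E}\le\|u\|_{1,\omega}\|g\|_{E}$ together with $(\check\psi\ast\varphi)^{(\alpha)}=\check\psi^{(\alpha)}\ast\varphi$, the equicontinuity estimate gives $|\langle\psi\ast f,\varphi\rangle|\le C\big(\max_{|\alpha|\le N}\|\check\psi^{(\alpha)}\|_{1,\omega}\big)\|\varphi\|_{E}$ for $\varphi\in\mathcal{D}_E$; since $\mathcal{D}_E$ is dense in $E$, this shows $\psi\ast f\in E'$ with a uniform bound over $B'$. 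Membership in $E'_{\ast}$ (i.e.\ $\lim_{h\to0}\|T_h(\psi\ast f)-\psi\ast f\|_{E'}=0$, uniformly over $B'$) follows from the same estimate applied to $T_h(\psi\ast f)-\psi\ast f=(T_{h}\psi-\psi)\ast f$, because $T_{-h}\check\psi\to\check\psi$ in $\mathcal{S}$ and hence in $L^{1}_{\omega}$ together with all derivatives.

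The core is $(ii)\Rightarrow(iii)$, carried out by a parametrix construction. Let $G=\mathcal{F}^{-1}\big((1+|\xi|^{2})^{-k}\big)$ be the Bessel kernel, so $(1-\Delta)^{k}G=\delta$; for $k$ large $G\in C^{m}$ with $m$ as large as desired and decays exponentially. Choosing $\phi\in\mathcal{D}(\mathbb{R}^{n})$ equal to $1$ near the origin, locality of $(1-\Delta)^{k}$ yields
\[
\delta=(1-\Delta)^{k}(\phi G)-\theta,\qquad \theta:=(1-\Delta)^{k}(\phi G)-\delta\in\mathcal{D}(\mathbb{R}^{n}),
\]
whence
\[
f=f\ast\delta=\sum_{|\beta|\le 2k}c_{\beta}\,(f\ast\phi G)^{(\beta)}-f\ast\theta .
\]
Here I would fix $k$ so that $m\ge N$ \emph{and} $m\ge\ell$, where $\ell$ is the order of a Schwartz seminorm dominating the embedding $\mathcal{S}\hookrightarrow E$; a mollification argument then shows $C^{\ell}_{c}(\mathbb{R}^{n})\subset E$, so that $\phi G,\theta\in E\cap C_{c}$ and the functions in the representation have the required form $f_{\alpha}=f\ast\check\varrho_{\alpha}$. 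To give meaning to $f\ast\phi G$ and bound it uniformly, I would first invoke Banach--Steinhaus for the pointwise-bounded family $\{f\mapsto\psi\ast f\}_{f\in B'}\subset L_{b}(\mathcal{S},E')$ provided by $(ii)$, obtaining a Schwartz seminorm $p$ with $\|\psi\ast f\|_{E'}\le p(\psi)$ for all $\psi\in\mathcal{S}$, $f\in B'$; approximating $\phi G$ in $C^{m}_{c}$ by $\mathcal{D}$-functions $\varrho^{(j)}$ then makes $f\ast\varrho^{(j)}$ Cauchy in $E'$, with limit $f\ast\phi G\in E'_{\ast}$ and $\sup_{f\in B'}\|f\ast\phi G\|_{E'}\le p(\phi G)=:M_{0}$. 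Continuity of the $f_{\alpha}$, the uniform $UC_{\omega}$ bound, and the pointwise estimate $\|f_{\alpha}\|_{\infty,\omega}<M$ I would extract from the two convolution structures: writing $\varrho_{\alpha}$ as a convolution $\sigma\ast\tau$ of compactly supported factors, one has $f\ast\check\varrho_{\alpha}=(f\ast\check\sigma)\ast\check\tau$ with $f\ast\check\sigma\in E'$ uniformly bounded, and the pointwise estimate \eqref{estimateconv1} gives $\|(f\ast\check\sigma)\ast\check\tau\|_{\infty,\omega}\le\|f\ast\check\sigma\|_{E'}\|\tau\|_{E}$, while continuity in $x$ and uniform weighted continuity follow from property $(d)$ for $E$.

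The main obstacle is precisely this last step of $(ii)\Rightarrow(iii)$: starting only from $f\in\mathcal{D}'_{E'_{\ast}}$ (so that the naive norm $\|f\|_{E'}$ is meaningless), one must produce the explicit convolution representatives $f_{\alpha}=f\ast\check\varrho_{\alpha}$ that lie in $E'_{\ast}$ rather than merely in $E'$, are genuine continuous functions, and satisfy the \emph{two} uniform bounds $\|f_{\alpha}\|_{E'}<M$ and $\|f_{\alpha}\|_{\infty,\omega}<M$ simultaneously. This forces a careful bookkeeping of three quantitative inputs at once---the uniform order $N$ of the equicontinuous family $B'$, the smoothness $\ell$ ensuring $C^{\ell}_{c}\subset E$, and the regularity of the parametrix kernel $\phi G$---and a deliberate use of both convolution modules \eqref{estimateconv1} and \eqref{estimateconv2} (the former for the weighted sup bound, the latter to keep the representatives inside the closed subspace $E'_{\ast}=L^{1}_{\check\omega}\ast E'$). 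Once $(iii)$ is secured the cycle closes via the trivial $(iii)\Rightarrow(i)$ above, and in the reflexive case the identification $E'=E'_{\ast}$ makes the statement symmetric.
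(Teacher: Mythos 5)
The paper itself contains no proof of Theorem \ref{th structure}: it is imported verbatim from \cite{dpv}, with the reader sent to \cite[Thm.~3 and Cor.~4]{dpv}. So the comparison is against that cited argument, and your reconstruction follows the same standard route used for structure theorems of this kind: equicontinuity of $B'$ via barrelledness of the Fr\'{e}chet space $\mathcal{D}_E$, Banach--Steinhaus applied to the maps $\psi\mapsto\psi\ast f$ to extract a uniform Schwartz-seminorm bound $\|\psi\ast f\|_{E'}\le p(\psi)$, and a parametrix of $(1-\Delta)^{k}$ to produce the representation. Your implications $(iii)\Rightarrow(i)$ and $(i)\Rightarrow(ii)$ are sound as written; in particular the identity $T_h(\psi\ast f)-\psi\ast f=(T_h\psi-\psi)\ast f$, combined with continuity of translation in $L^{1}_{\omega}$ and the description $E'_{\ast}=\{g\in E':\lim_{h\to0}\|T_hg-g\|_{E'}=0\}$, correctly yields membership in $E'_{\ast}$ uniformly over $B'$. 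The observation that $C^{\ell}_{c}(\mathbb{R}^{n})\subset E$ by mollification is also correct and is exactly what is needed to make sense of $\|\phi G\|_{E}$ and $p(\phi G)$.

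The one genuine gap is the sentence ``writing $\varrho_{\alpha}$ as a convolution $\sigma\ast\tau$ of compactly supported factors.'' The kernels your parametrix actually produces are $\phi G$ and $\theta$, and a given compactly supported $C^{m}$ function need not factor as a convolution of two compactly supported functions; as stated, this step fails. It is not cosmetic: the factorization is precisely what lets you use \eqref{estimateconv2} on the inner factor (for the uniform $E'$ bound) and \eqref{estimateconv1} together with property $(d)$ on the outer factor (for continuity, the bound $\|f_{\alpha}\|_{\infty,\omega}<M$, and membership in $UC_{\omega}$). The repair is standard but must be built into the construction rather than asserted afterwards: convolve the parametrix identity with itself, i.e.\ use $\delta=\delta\ast\delta=(1-\Delta)^{2k}\bigl(\phi G\ast\phi G\bigr)-2(1-\Delta)^{k}\bigl(\phi G\ast\theta\bigr)+\theta\ast\theta$, so that every kernel appearing in the representation of $f$ is, by construction, a convolution of two functions in $C^{m}_{c}\subset E$; then your chain $f\ast\check{\varrho}_{\alpha}=(f\ast\check{\sigma})\ast\check{\tau}$ and all the uniform estimates go through, and the ``furthermore'' clause (kernels depending only on $B'$) holds because $k$ depends only on the Banach--Steinhaus seminorm attached to $B'$. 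Two smaller repairs: Banach--Steinhaus on $\mathcal{S}(\mathbb{R}^{n})$ requires each map $\psi\mapsto\psi\ast f$ to be continuous into $E'$, which you should justify by the closed graph theorem (the graph is closed because $E'\to\mathcal{S}'(\mathbb{R}^{n})$ is injective and $\psi_{j}\ast f\to\psi\ast f$ in $\mathcal{S}'(\mathbb{R}^{n})$); and your requirement ``$m\ge N$'' is impossible for the Bessel kernel when $N=2k$ (one only has $G\in C^{m}$ for $m<2k-n$), but it is also never used---only $m\ge\ell$ matters, which holds for $k$ large.
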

We shall often apply Theorem \ref{th structure} to the case of singleton sets $B'=\{f\}$. Part $(ii)$ from Theorem \ref{th structure} also suggests to embed the distribution space $\mathcal{D}_{E'_{\ast}}'$ into the space of $E'_{\ast}$-valued tempered distributions as follows. Define the injection
\begin{equation}
\label{embedding}
\iota:\mathcal{D}_{E'_{\ast}}'\to\mathcal{S}'(\mathbb{R}^{n},E'_{\ast}),
\end{equation}
where $\iota(f)=\mathbf{f}$ is given by 
\begin{equation}
\label{eq:8}
\left\langle\mathbf{f},\varphi\right\rangle=f\ast \check{\varphi}, \ \ \ \varphi\in\mathcal{S}(\mathbb{R}^{n}).
\end{equation}
One can prove \cite[Sect. 4]{dpv} that (\ref{embedding}) is continuous, has closed range given by the subspace of $E'_{\ast}$-valued distributions that commute with every translation operator, namely \cite[Cor. 3]{dpv}, 
\begin{equation}
\label{eqrange}
\iota(\mathcal{D}_{E'_{\ast}}')=\{\mathbf{f}\in\mathcal{S}'(\mathbb{R}^{n},E'_{\ast}):\: \left\langle T_{h}\mathbf{f},\varphi\right\rangle = T_{h} \left\langle \mathbf{f},\varphi\right\rangle, \:\forall h\in\mathbb{R}^{n},\:\forall\varphi\in \mathcal{S}(\mathbb{R}^{n})\},
\end{equation}
and its inverse mapping is sequentially continuous. We collect the latter fact in the first part of the following theorem (see \cite[Cor. 5]{dpv} for a proof).

\begin{theorem}[\cite{dpv}]\label{sequences} A sequence $\left\{f_{j}\right\}_{j=0}^{\infty}$ $($or similarly, a filter with a countable or bounded basis$)$ is (strongly) convergent in $\mathcal{D}_{E'_{\ast}}'$ if and only if $\left\{f_{j}\ast \psi\right\}_{j=0}^{\infty}$ is convergent in $E'$ for all $\psi\in\mathcal{S}(\mathbb{R}^{n})$. In addition, these statements are also equivalent to the existence of $N\in\mathbb{N}$ and continuous functions of compact support $\varrho_{\alpha}\in E$, $|\alpha|\leq N$, such that  $f_j=\sum_{|\alpha|\leq N} f^{(\alpha)}_{\alpha,j}$ and the sequences $\left\{f_{\alpha,j}\right\}_{j=0}^{\infty}$ are convergent in both $E'_{\ast}$ and $L^{\infty}_{\omega}$, where $f_{\alpha,j}=f_{j}\ast \check{\varrho}_{\alpha}\in E'_{\ast}\cap UC_{\omega}$.
\end{theorem}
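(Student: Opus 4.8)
The plan is to establish the cycle of implications among three statements: (A) $\{f_j\}$ converges strongly in $\mathcal{D}'_{E'_{\ast}}$; (B) $\{f_j\ast\psi\}$ converges in $E'$ for every $\psi\in\mathcal{S}(\mathbb{R}^{n})$; and (C) the representation with $E'_{\ast}$- and $L^{\infty}_{\omega}$-convergent coefficients $f_{\alpha,j}$ exists. I would first dispose of the two routine directions. For (A)$\Rightarrow$(B): since $\iota$ in (\ref{embedding}) is continuous and linear, strong convergence of $f_j$ forces $\mathbf{f}_j=\iota(f_j)$ to converge strongly in $\mathcal{S}'(\mathbb{R}^{n},E'_{\ast})$; evaluating at the fixed test function $\check{\psi}$ is continuous and gives convergence of $\langle\mathbf{f}_j,\check{\psi}\rangle=f_j\ast\psi$ in $E'_{\ast}\subseteq E'$ by (\ref{eq:8}). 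For (C)$\Rightarrow$(A): the inclusion $E'_{\ast}\hookrightarrow\mathcal{D}'_{E'_{\ast}}$ and the differentiation operators $\partial^{\alpha}$ are continuous on $\mathcal{D}'_{E'_{\ast}}$, so $E'_{\ast}$-convergence of each $f_{\alpha,j}$ yields convergence of $f_j=\sum_{|\alpha|\leq N}f_{\alpha,j}^{(\alpha)}$.

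The core of the argument is (B)$\Rightarrow$(A). Statement (B) says, after noting that $f_j\ast\psi\in E'_{\ast}$ and that $E'_{\ast}$ is closed in $E'$, that the family $\{\mathbf{f}_j\}$ converges pointwise in $L_b(\mathcal{S}(\mathbb{R}^{n}),E'_{\ast})$. Since $\mathcal{S}(\mathbb{R}^{n})$ is barrelled and $E'_{\ast}$ is a Banach space, the Banach--Steinhaus theorem shows $\{\mathbf{f}_j\}$ is equicontinuous and its pointwise limit $\mathbf{g}$ is continuous, i.e.\ $\mathbf{g}\in\mathcal{S}'(\mathbb{R}^{n},E'_{\ast})$. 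Equicontinuity together with the Montel property of $\mathcal{S}(\mathbb{R}^{n})$ (bounded sets are relatively compact) upgrades pointwise to uniform convergence on bounded sets, so $\mathbf{f}_j\to\mathbf{g}$ strongly. The commutation relation defining the range in (\ref{eqrange}) is preserved under this limit (using that each $T_h$ is bounded on $E'_{\ast}$), hence $\mathbf{g}=\iota(f)$ for some $f\in\mathcal{D}'_{E'_{\ast}}$; the sequential continuity of $\iota^{-1}$ then yields $f_j\to f$ strongly, which is (A).

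It remains to pass from (A) to the explicit representation in (C). Applying Theorem \ref{th structure} to the bounded set $B'=\{f_j\}\cup\{f\}$ produces $N$ and fixed continuous compactly supported $\varrho_{\alpha}\in E$ with $f_j=\sum_{|\alpha|\leq N}f_{\alpha,j}^{(\alpha)}$, $f_{\alpha,j}=f_j\ast\check{\varrho}_{\alpha}\in E'_{\ast}\cap UC_{\omega}$, and uniform bounds $\|f_{\alpha,j}\|_{E'},\|f_{\alpha,j}\|_{\infty,\omega}<M$; the same holds for $f$ with coefficients $f_{\alpha}=f\ast\check{\varrho}_{\alpha}$. To prove $f_{\alpha,j}\to f_{\alpha}$ in $E'_{\ast}$ I would mollify: fixing $\phi\in\mathcal{S}(\mathbb{R}^{n})$ with $\int\phi=1$, one has $\phi_{\varepsilon}\ast\check{\varrho}_{\alpha}\in\mathcal{S}(\mathbb{R}^{n})$, so (B) gives $\phi_{\varepsilon}\ast f_{\alpha,j}=f_j\ast(\phi_{\varepsilon}\ast\check{\varrho}_{\alpha})\to\phi_{\varepsilon}\ast f_{\alpha}$ in $E'$ for each fixed $\varepsilon$. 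A three-term split then reduces matters to the uniform approximation $\sup_j\|f_{\alpha,j}-\phi_{\varepsilon}\ast f_{\alpha,j}\|_{E'}\to0$ as $\varepsilon\to0^{+}$ (and its $\|\cdot\|_{\infty,\omega}$ analogue), handling the term involving $f_{\alpha}$ by (\ref{eqapprox}).

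This uniform estimate is the step I expect to be the main obstacle: membership in $E'_{\ast}$ gives strong continuity of translations for each individual $f_{\alpha,j}$, but not across the family. I would resolve it by writing
$$f_{\alpha,j}-\phi_{\varepsilon}\ast f_{\alpha,j}=\int_{\mathbb{R}^{n}}\phi(w)\,f_j\ast\bigl(\check{\varrho}_{\alpha}-T_{-\varepsilon w}\check{\varrho}_{\alpha}\bigr)\,dw,$$
which exploits that the kernel $\varrho_{\alpha}$ is the same for all $j$: combining the boundedness of $\{f_j\}$ in $\mathcal{D}'_{E'_{\ast}}$ (equivalently, the equicontinuity of $\{\iota(f_j)\}$ and a resulting uniform bound of the form $\|f_j\ast\check{g}\|_{E'}\le C\|g\|_{E}$) with the continuity of the single translation orbit $w\mapsto T_{\varepsilon w}\varrho_{\alpha}$ in $E$ furnished by property $(d)$, and dominating via the growth function so that dominated convergence applies. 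The $L^{\infty}_{\omega}$ convergence follows by the identical scheme using the pointwise estimate (\ref{estimateconv1}) and $f_{\alpha,j}\in UC_{\omega}$. Finally, the filter version with a countable or bounded basis is obtained verbatim, since completeness, the Banach--Steinhaus input, and the boundedness supplied by Theorem \ref{th structure} all hold in that generality.
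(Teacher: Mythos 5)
The paper itself gives no proof of this theorem (it is quoted from \cite[Cor.\ 5]{dpv}), so your proposal has to stand on its own; its overall architecture is sensible, and the implications (A)$\Rightarrow$(B) and (C)$\Rightarrow$(A) are fine. The genuine gap is in the step you yourself flag as the crux: the uniform-in-$j$ estimate $\sup_j\|f_{\alpha,j}-\phi_{\varepsilon}\ast f_{\alpha,j}\|_{E'}\to0$. You propose to obtain it from ``the equicontinuity of $\{\iota(f_j)\}$ and a resulting uniform bound of the form $\|f_j\ast\check{g}\|_{E'}\le C\|g\|_{E}$''. No such bound follows from equicontinuity, and it is in fact false. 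Banach--Steinhaus applied to $\{\iota(f_j)\}\subset L_b(\mathcal{S}(\mathbb{R}^{n}),E'_{\ast})$ yields a bound $\|f_j\ast\check{g}\|_{E'}\le C\,q_N(g)$ for some \emph{Schwartz} seminorm $q_N$ involving derivatives of $g$ up to a positive order $N$; convolution with a distribution of positive order loses derivatives, so the $E$-norm alone can never control the left-hand side. Concretely: with $E=L^{1}(\mathbb{R})$, $E'=L^{\infty}$, $\mathcal{D}'_{E'_{\ast}}=\mathcal{B}'$, and the constant sequence $f_j=\delta'$, your bound applied to the integrand reads
$$\|f_{\alpha,j}-T_{-\varepsilon w}f_{\alpha,j}\|_{E'}\le C\,\|\varrho_{\alpha}-T_{\varepsilon w}\varrho_{\alpha}\|_{E},$$
i.e.\ $\sup_x|\varrho_\alpha'(x)-\varrho_\alpha'(x+\varepsilon w)|\le C\|\varrho_\alpha-T_{\varepsilon w}\varrho_\alpha\|_{L^{1}}$. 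The right-hand side is $O(\varepsilon|w|)$ for $\varrho_\alpha\in C^{1}_{c}$, while the left-hand side is the modulus of continuity of $\varrho_\alpha'$, which may decay arbitrarily slowly (and for merely continuous $\varrho_\alpha$ the left-hand side need not even be finite, since $\delta'\ast\check{g}=\pm(\check{g})'$ need not be an $L^{\infty}$ function). So the dominated-convergence scheme has no valid majorant, and the key uniformity remains unproven.

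A second, lesser problem: your proof of (B)$\Rightarrow$(A) invokes ``the sequential continuity of $\iota^{-1}$'', but within this paper that assertion \emph{is} the first part of the theorem being proven (the text states it and then says the theorem ``collects'' it, citing \cite[Cor.\ 5]{dpv}); as used, that implication is circular. It would be harmless if your route (B)$\Rightarrow$(C)$\Rightarrow$(A) closed independently, but by the above it does not. What a repair needs is to match the smoothness of the kernels to the Banach--Steinhaus order: once equicontinuity gives the order $N$, one must use (or re-derive) parametrix kernels $\varrho_{\alpha}$ of class $C^{N}_{c}$, so that $\varrho_{\alpha}$ and its translates lie in the completion of $\mathcal{S}(\mathbb{R}^{n})$ under $q_N$ (equivalently under $\|\cdot\|_{E,N}$); then the standard three-epsilon argument --- uniformly bounded operators converging pointwise on a dense subspace of a Banach space converge pointwise everywhere --- yields convergence of $f_j\ast\check{\varrho}_{\alpha}$ in $E'$ directly, with no uniform translation estimate required. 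Your mollification identity is a reasonable start, but without this order-matching device the argument fails exactly where you predicted it would be hardest.
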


We conclude this section with some examples of spaces $\mathcal{D}'_{E'_{\ast}}$.
\begin{example}\label{ex2} The Schwartz spaces $\mathcal{D}'_{L^{p}}$ and $\mathcal{B}'$ are particular instances of $\mathcal{D}'_{E'_{\ast}}$. More generally, retaining the notation from Example \ref{ex1}, the choices $E=L^{q}_{\eta^{-1}}$ lead to the spaces $\mathcal{D}'_{L^{p}_{\eta}}$, $1<p<\infty$. When $p=1$, we use the notation $\mathcal{B}'_{\eta}:=(\mathcal{D}_{L^{1}_{\eta}})'$. We set $\dot{\mathcal{B}}_{\eta}:=\mathcal{D}_{C_{\eta}}$ so that $\mathcal{D}_{L_{\eta}^1}'=(\mathcal{D}_{C_{\eta}})'=(\dot{\mathcal{B}}_\eta)'$. We denote as  $\dot{\mathcal{B}}'_{\eta}$ the closure of $\mathcal{D}(\mathbb{R}^{n})$ in $\mathcal{B}'_{\eta}$.
\end{example}

\begin{remark}
\label{remarkemb1} The embeddings (\ref{eqemb1}) and (\ref{eqemb2}) can be refined to $\mathcal{D}_{L^{1}_{\omega}}\hookrightarrow\mathcal{D}_{E}\hookrightarrow\dot{\mathcal{B}}_{\check{\omega}}$ and hence the continuous inclusions $\mathcal{D}'_{L^{1}_{\check{\omega}}}\rightarrow\mathcal{D}'_{E'_{\ast}}\rightarrow{\mathcal{B}}'_{\omega}$ (cf. \cite[Thm. 4 ]{dpv}). When $E$ is reflexive $\mathcal{D}'_{L^{1}_{\check{\omega}}}\hookrightarrow\mathcal{D}'_{E'}\hookrightarrow\dot{\mathcal{B}}'_{\omega}$.
\end{remark}
\section{Boundary values of holomorphic functions} \label{boundary values}
In this section we study boundary values in the context of the space $\mathcal{D}'_{E'_{\ast}}$. We shall characterize those holomorphic functions on tube domains, whose bases are open convex cones, that have boundary values in the strong topology of $\mathcal{D}'_{E'_{\ast}}$. Our first goal is to obtain such characterization for holomorphic functions defined on a truncated wedge. We begin with a useful lemma.
The constants $M'$ and $\tau$ are those occurring in the estimate (\ref{eqw}) for the weight function of the translation-invariant Banach space of tempered distributions $E$.

\begin{lemma}\label{bvl1} Let $V\subsetneq\mathbb{R}^{n}$ be an open set and let $F$ be holomorphic on the tube $T^{V}$. Suppose that $F(\:\cdot \:+ iy)\in E'$ for $y\in V$ and
\begin{equation}
\label{bveq01}
\sup_{y\in V}\frac{(d_{V}(y))^{\kappa_{1}}}{(1+d_{V}(y))^{\kappa_{2}}}||F(\:\cdot \:+ iy)||_{E'}=M<\infty \ \ \ \ (\kappa_{1},\kappa_{2}\geq 0) .
\end{equation}
Then, for every $\alpha\in\mathbb{N}^{n}$ one has $F^{(\alpha)}(\:\cdot \:+ iy)\in E'$ for all $y\in V$ and
\begin{equation}
\label{bveq02}
\sup_{y\in V}\frac{(d_{V}(y))^{\kappa_{1}+|\alpha|}}{(1+d_{V}(y))^{\kappa_{2}+\tau}}||F^{(\alpha)}(\:\cdot \:+ iy)||_{E'}\leq (2\pi)^{n/2}M M'\frac{(1+\lambda)^{\kappa_{2}}}{(1-\lambda)^{\kappa_{1}}}\left(\frac{\sqrt{n}}{\lambda}\right)^{|\alpha|}\alpha!, \ \ \ \lambda\in(0,1).
\end{equation}
Furthermore, the $E'$-valued mapping $\mathbf{F}:T^{V}\to E'$ is holomorphic, where
\begin{equation}
\label{bveq03}
\mathbf{F}(x+iy)=T_{x}(F(\:\cdot\:+y)).
\end{equation}
\end{lemma}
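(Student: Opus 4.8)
The plan is to prove the two assertions in turn, handling the derivative estimates via the Cauchy integral formula and then deducing holomorphy of the $E'$-valued map $\mathbf{F}$ from these estimates. The key structural fact I will exploit throughout is the pointwise convolution estimate (\ref{estimateconv1}) together with the translation-invariance of $E'$ via the growth function $\omega$ satisfying (\ref{eqw}), which converts analytic control of $F$ into $E'$-norm control.

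For the derivative bound (\ref{bveq02}), fix $y \in V$ and $\lambda \in (0,1)$. The idea is to apply the multidimensional Cauchy integral formula to $F^{(\alpha)}(\:\cdot\:+iy)$ over a small polydisc around each point, but done in the $E'$-valued sense. Concretely, for $\alpha \in \mathbb{N}^n$ I would write $F^{(\alpha)}(x+iy)$ as a Cauchy integral of $F$ over a torus of polyradius comparable to $\lambda\, d_V(y)/\sqrt{n}$ in each coordinate; the factor $\sqrt n$ enters because a Euclidean ball of radius $d_V(y)$ contains a polydisc of polyradius $d_V(y)/\sqrt n$, which keeps us inside $T^V$. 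Each point on this torus has the form $x + i y'$ with $y' \in V$ and $d_V(y')$ comparable to $(1-\lambda)d_V(y)$ up to $(1+\lambda)d_V(y)$, so the hypothesis (\ref{bveq01}) gives $\|F(\:\cdot\:+iy')\|_{E'} \le M (1+d_V(y'))^{\kappa_2}/(d_V(y'))^{\kappa_1}$. Taking $E'$-norms under the integral sign (the $E'$-valued integrability follows since $F(\:\cdot\:+iy') \in E'$ by hypothesis and the integrand is continuous into $E'$) and estimating the geometric quantities $d_V(y')$ from below by $(1-\lambda)d_V(y)$ in the denominator and from above by $(1+\lambda)d_V(y)$ in the numerator produces the factors $(1-\lambda)^{-\kappa_1}$, $(1+\lambda)^{\kappa_2}$, the $(\sqrt n/\lambda)^{|\alpha|}$ from the Cauchy kernel's polyradius raised to $-(\alpha+\mathbf{1})$, and the $\alpha!$ from differentiating the kernel. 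The constant $(2\pi)^{n/2}$ and the extra $(1+d_V(y))^{\tau}$ with $M'$ arise when reorganizing into the claimed normalization; the $\tau$-power specifically comes from the slack $(1+d_V(y'))^{\kappa_2}$ versus $(1+d_V(y))^{\kappa_2+\tau}$. I would present the torus integral, bound it termwise, and collect constants.

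For the holomorphy of $\mathbf{F}:T^V \to E'$ defined by (\ref{bveq03}), note first that $\mathbf{F}$ is well-defined into $E'$ because $F(\:\cdot\:+iy) \in E'$ and translation is a bounded operator on $E'$. Since $E'$ is a Banach space, holomorphy of an $E'$-valued map is equivalent to weak holomorphy, so I would verify that $z \mapsto \langle \mathbf{F}(z), \phi \rangle$ is holomorphic for each test functional, or more directly establish complex differentiability by showing the difference quotients converge in $E'$-norm. The cleanest route is to use the derivative estimate (\ref{bveq02}) with $|\alpha|=1$: it shows that $\partial_{z_j} F(\:\cdot\:+iy)$ exists as an element of $E'$ and depends on $z$ with the right growth, and that the Cauchy--Riemann relations hold coordinatewise in the $E'$-valued sense because they hold scalarly for $F$ and differentiation commutes with the continuous translation action $T_x$. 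The continuity of $x \mapsto T_x g$ in $E'$ is exactly property $(d)$ for $E'_\ast$, but here I only need boundedness of $T_x$ on $E'$ plus the norm convergence of difference quotients coming from the $|\alpha|=1$ estimate, so I would assemble these to conclude that $\mathbf{F}$ has a complex derivative at each point of $T^V$, hence is holomorphic.

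The main obstacle I anticipate is making the Cauchy integral formula rigorous as an $E'$-valued (Bochner or Pettis) integral and justifying the interchange of the $E'$-norm with the integral: one must confirm that $y' \mapsto F(\:\cdot\:+iy')$ is a continuous $E'$-valued map on the relevant torus (so the integral exists) and that the scalar Cauchy formula, applied against each element of $E$ through the duality $\langle F(\:\cdot\:+iy'), g\rangle$, reproduces the $E'$-valued identity. Once the vector-valued Cauchy representation is in hand, the estimate is bookkeeping of the geometric factors; and the holomorphy statement is then a short corollary. I would therefore devote the bulk of the argument to setting up the torus parametrization, checking it stays in $T^V$ with the stated distance comparisons, and verifying the $E'$-valued integrability, after which the constants in (\ref{bveq02}) follow by direct estimation.
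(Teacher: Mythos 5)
Your overall route---the full $n$-dimensional Cauchy integral formula over the distinguished boundary of a polydisc, scalarized through the duality with $E$, followed by a vector-valued holomorphy criterion---is viable, and it differs in technique from the paper's proof, which instead pairs $F$ with $\varphi\in\mathcal{D}(\mathbb{R}^{n})$, applies \emph{one-variable} Cauchy inequalities to $G(s)=\int_{\mathbb{R}^{n}}F(x+iy+s\zeta)\varphi(x)\,dx$ along complex lines in directions $\zeta$ of the torus, and then isolates each coefficient $\langle F^{(\alpha)}(\,\cdot\,+iy),\varphi\rangle/\alpha!$ by a Parseval (orthogonality) argument on $(\partial\mathbb{D})^{n}$. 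However, your sketch has a concrete error at precisely the point where the translation-invariant structure of $E'$ matters. Points of the distinguished boundary of the polydisc centered at $x+iy$ are \emph{not} of the form $x+iy'$: they are $\bigl(x+\rho\cos\theta\bigr)+i\bigl(y+\rho\sin\theta\bigr)$ with $\rho=\lambda d_{V}(y)/\sqrt{n}$ and $\rho\cos\theta:=(\rho\cos\theta_{1},\dots,\rho\cos\theta_{n})$, so their \emph{real} parts are displaced by as much as $\lambda d_{V}(y)$. Consequently the integrand, read as an $E'$-valued function of $\theta$ relative to the base variable, is $T_{\rho\cos\theta}F(\,\cdot\,+i(y+\rho\sin\theta))$, and estimating its $E'$-norm costs the translation factor $\|T_{\rho\cos\theta}\|_{L(E')}\leq\omega(\rho\cos\theta)\leq M'(1+d_{V}(y))^{\tau}$ by (\ref{eqw}). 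This, and only this, is the source of the factor $M'(1+d_{V}(y))^{\tau}$ in (\ref{bveq02}); your attribution of the $\tau$-power to ``slack'' between $(1+d_{V}(y'))^{\kappa_{2}}$ and $(1+d_{V}(y))^{\kappa_{2}+\tau}$ is incorrect (the distance comparisons alone only yield $(1+\lambda)^{\kappa_{2}}(1+d_{V}(y))^{\kappa_{2}}$), and as written your chain of estimates never generates $M'$ or $\tau$ at all. Compare with the paper's identity $G(s)=\langle T_{tu-\sigma v}F(\,\cdot\,+i(y+tv+\sigma u)),\varphi\rangle$, $s=t+i\sigma$, $\zeta=u+iv$, where the factor $\omega(tu-\sigma v)$ is incurred and bounded in exactly this way.

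The same oversight undermines your plan for justifying the vector-valued integral. The continuity you propose to verify, that of $y'\mapsto F(\,\cdot\,+iy')$ into $E'$, concerns the wrong map; the true integrand contains the translations $T_{\rho\cos\theta}$, and $h\mapsto T_{h}f$ is in general \emph{not} norm-continuous on $E'$ (property $(d)$ may fail for $E'$; take $E=L^{1}$, $E'=L^{\infty}$---this failure is the very reason the paper introduces $E'_{\ast}$). Hence the Bochner formulation is unavailable for general $E'$, and you must fall back on the duality version you mention in passing: pair with $\varphi\in\mathcal{D}(\mathbb{R}^{n})$, apply the scalar Cauchy formula and Fubini, estimate using (\ref{eqw}) and (\ref{bveq01}), and conclude by density of $\mathcal{D}(\mathbb{R}^{n})$ in $E$; this is also, in essence, how the paper avoids vector-valued integration altogether. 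With these repairs your derivative estimate does go through (indeed without the factor $(2\pi)^{n/2}$), and the holomorphy of (\ref{bveq03}) then follows either from weak-$\ast$ holomorphy against the dense subspace $\mathcal{D}(\mathbb{R}^{n})\subset E$ combined with local boundedness of $\|\mathbf{F}\|_{E'}$, or from the norm-convergent power series expansion used in the paper; note only that your ``difference quotient'' route requires the $|\alpha|=2$ bounds of (\ref{bveq02}) to control the Taylor remainder, not merely $|\alpha|=1$.
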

\begin{proof} The assumption $V\neq \mathbb{R}^{n}$ is only used to ensure that $d_{V}(y)<\infty$ for all $y\in V$. Fix $0<\lambda<1$. Let $\varphi\in\mathcal{D}(\mathbb{R}^{n})$. Let $\zeta=u+iv=(\zeta_1,\zeta_2,\dots,\zeta_n)$ be an arbitrary point in the distinguished boundary of the polydisc $\mathbb{D}^{n}$, that is, $|\zeta_{1}|=|\zeta_2|=\dots=|\zeta_n|=1$. We write $s=t+i\sigma\in \mathbb{C}$. For arbitrary $y\in V$, define the function $G(s)=G_{y,\zeta}(s)=\int_{\mathbb{R}^{n}}F(x+iy+s\zeta)\varphi(x)dx = \left\langle T_{tu-\sigma v}F(\:\cdot \:+ i(y+tv+\sigma u )), \varphi  \right\rangle$. It is clear that $G$ is defined and holomorphic in the disc $\{s\in\mathbb{C}: |s|<d_{V}(y)/\sqrt{n}\}.$  Note that
\begin{align*}
|G(s)|&\leq \frac{M\omega(tu-\sigma v) (1+d_{V}(y+tv+\sigma u))^{\kappa_{2}}}{(d_{V}(y+tv+\sigma u))^{\kappa_{1}}}\:\|\varphi\|_{E}
\\
&
\leq \frac{(1+\lambda)^{\kappa_{2}}MM'(1+d_{V}(y))^{\tau+\kappa_2}}{((1-\lambda)d_{V}(y))^{\kappa_1}} \:\|\varphi\|_{E} \ \ \   \mbox{for } |s|\leq\frac{\lambda d_{V}(y)}{\sqrt{n}}.
\end{align*}
The Cauchy inequality for derivatives applied to circle $|s|=(\lambda/\sqrt{n}) d_{V}(y)$ thus yields
$$
|G^{(N)}(0)|\leq \frac{n^{N/2}(1+\lambda)^{\kappa_{2}}MM'(1+|d_{V}(y)|)^{\tau+\kappa_2}}{\lambda^{N}(1-\lambda)^{\kappa_{1}}(d_{V}(y))^{\kappa_1+N}} \: N!\|\varphi\|_{E}, \ \ \ N=0,1,2,\dots,
$$
i.e.,
$$
\left|P_{N}(\zeta)\right|\leq \frac{n^{N/2}(1+\lambda)^{\kappa_{2}}MM'(1+|d_{V}(y)|)^{\tau+\kappa_2}}{\lambda^{N}(1-\lambda)^{\kappa_{1}}(d_{V}(y))^{\kappa_1+N}}\:\|\varphi\|_{E}, \ \ \ N=0,1,2,\dots,
$$
where $P_{N}(\zeta)=\sum_{|\alpha|=N}\zeta^{\alpha}\left\langle F^{(\alpha)}(\:\cdot \:+ iy),\varphi\right\rangle/\alpha!$. Integrating $|P_{N}(\zeta)|^{2}$ over $(\partial\mathbb{D})^{n}$, we obtain
$$
\left|\left\langle F^{(\alpha)}(\:\cdot \:+ iy),\varphi\right\rangle\right|\leq \frac{(1+\lambda)^{\kappa_{2}}MM' n^{|\alpha|/2} \alpha!(2\pi)^{n/2}(1+|d_{V}(y)|)^{\tau+\kappa_2}}{\lambda^{|\alpha|}(1-\lambda)^{\kappa_{1}}(d_{V}(y))^{\kappa_1+|\alpha|}}\:\|\varphi\|_{E},
$$
for all $\varphi\in\mathcal{D}(\mathbb{R}^{n})$, $y\in V$, and $\alpha\in\mathbb{N}^{n}$. The very last inequality is equivalent to (\ref{bveq02}). To show that (\ref{bveq03}) is holomorphic, it is enough to fix $z\in V$ and $\zeta \in (\partial \mathbb{D})^{n}$ and to verify that $\mathbf{F}(z+s\zeta)$ is holomorphic in $|s|< d_{V}(y)/\sqrt{n}$. By the previous argument, $F(\:\cdot\:+iy+s\zeta)=\sum_{k=0}^{\infty}s^{k}g_{k}$, with $g_{k}=\sum_{|\alpha|=k} \zeta^{\alpha} F^{(\alpha)}(\:\cdot \:+ iy)/\alpha!$, is a convergent power series in $E'$ for $|s|< d_{V}(y)/\sqrt{n}$. Employing the continuity of $T_{x}$, we obtain $\mathbf{F}(z+s\zeta)=\sum_{k=0}^{\infty}s^{k}T_{x}g_{k}$. Hence, $\mathbf{F}$ is holomorphic.
\end{proof}

Lemma \ref{bvl1} has the ensuing consequence.
\begin{corollary}
\label{bvc1} Let $V\subseteq \mathbb{R}^{n}$ and let $F$ be holomorphic in $T^{V}$ such that $F(\:\cdot\:+iy)\in E'$ for all $y\in V$ and $\sup_{y\in K}\|F(\:\cdot\:+iy)\|_{E'}<\infty$ for every compact subset $K\subset V$. Then $\lim_{y\to y_{0}}\|F(\:\cdot\:+iy)-F(\:\cdot\:+iy_0)\|_{E'}=0$ for each $y_{0}\in V$.
\end{corollary}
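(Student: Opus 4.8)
The plan is to reduce the assertion to a local statement near each fixed $y_{0}$ and then to invoke the holomorphy of the $E'$-valued map $\mathbf{F}$ supplied by Lemma \ref{bvl1}. Fix $y_{0}\in V$. Since $V$ is open, I would first choose $r>0$ so small that $\{y\in\mathbb{R}^{n}:|y-y_{0}|\leq r\}\subset V$, and then set $V_{0}=\{y\in\mathbb{R}^{n}:|y-y_{0}|<r\}$, which is an open and bounded, hence proper, subset of $\mathbb{R}^{n}$. Because $\overline{V_{0}}$ is a compact subset of $V$, the local boundedness hypothesis yields $\sup_{y\in V_{0}}\|F(\:\cdot\:+iy)\|_{E'}\leq \sup_{y\in \overline{V_{0}}}\|F(\:\cdot\:+iy)\|_{E'}=:M<\infty$. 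This is exactly condition (\ref{bveq01}) for $F$ on the base $V_{0}$ with the choice $\kappa_{1}=\kappa_{2}=0$; moreover $V_{0}\subsetneq\mathbb{R}^{n}$, so the hypotheses of Lemma \ref{bvl1} are met on $T^{V_{0}}$ even in the case $V=\mathbb{R}^{n}$ that is excluded from the lemma itself.

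Applying Lemma \ref{bvl1} on $T^{V_{0}}$, I obtain in particular that the $E'$-valued mapping $\mathbf{F}\colon T^{V_{0}}\to E'$ of (\ref{bveq03}) is holomorphic, and therefore norm-continuous on its domain. Composing $\mathbf{F}$ with the continuous map $y\mapsto iy$ from $V_{0}$ into $T^{V_{0}}$ and using $\mathbf{F}(iy)=T_{0}(F(\:\cdot\:+iy))=F(\:\cdot\:+iy)$, I conclude that $y\mapsto F(\:\cdot\:+iy)$ is continuous from $V_{0}$ into $E'$. Since $y_{0}\in V_{0}$, this gives $\lim_{y\to y_{0}}\|F(\:\cdot\:+iy)-F(\:\cdot\:+iy_{0})\|_{E'}=0$, and as $y_{0}\in V$ was arbitrary the corollary follows.

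Essentially all of the substantive work has already been carried out in Lemma \ref{bvl1}, so I do not anticipate a genuine obstacle here; the only point demanding care is the localization, which serves the double purpose of upgrading the merely local bound to the uniform bound (\ref{bveq01}) and of guaranteeing the properness $V_{0}\neq\mathbb{R}^{n}$ required by the lemma. If one prefers a quantitative route that avoids quoting the continuity of Banach-space-valued holomorphic maps, one may instead shrink to the ball $\{y:|y-y_{0}|<r/2\}$, where estimate (\ref{bveq02}) with $|\alpha|=1$ bounds the first-order derivatives $\|F^{(\alpha)}(\:\cdot\:+iy)\|_{E'}$ uniformly, the quantity $d_{V_{0}}(y)$ being bounded below by $r/2$ and above by $r$ there. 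Combining this with the Cauchy--Riemann relation $\partial_{y_{j}}[F(\:\cdot\:+iy)]=i\,F^{(\alpha)}(\:\cdot\:+iy)$ (for $\alpha$ the $j$th coordinate unit multi-index), valid in the norm of $E'$, and integrating along the segment from $y_{0}$ to $y$ then produces a local Lipschitz estimate $\|F(\:\cdot\:+iy)-F(\:\cdot\:+iy_{0})\|_{E'}\leq C\,|y-y_{0}|$, which again yields the claim.
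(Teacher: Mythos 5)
Your proof is correct and follows essentially the same route as the paper: localize to a small ball around $y_{0}$ (which simultaneously makes the bound uniform and ensures the base is a proper subset of $\mathbb{R}^{n}$), then invoke the holomorphy, hence norm-continuity, of the $E'$-valued map $\mathbf{F}$ from Lemma \ref{bvl1} and evaluate at $iy$. Your extra quantitative variant via (\ref{bveq02}) is a fine addition but not needed; the paper's proof is exactly your first paragraph in compressed form.
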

\begin{proof} The statement is local, so we may assume $V\neq \mathbb{R}^{n}$. The mapping (\ref{bveq03}) is continuous at $z_{0}=iy_{0}$ and $F(\:\cdot\:+iy)=\mathbf{F}(iy)$.
\end{proof}

In the rest of the section we mainly focus our attention on tubes whose bases are either truncated convex cones or full convex cones. We now show our first main result:

\begin{theorem}\label{thbvw}
Let $C$ be an open convex cone and let $r>0$. Suppose that $F$ is holomorphic on the tube $T^{C(r)}$ and satisfies
\begin{equation}
\label{bveq04} F(\: \cdot\:+iy)\in \mathcal{D}'_{E'_{\ast}}\:, \  \  \ \mbox{for every }y\in C(r),
\end{equation}
and the sets $\{F(\: \cdot\:+iy): \: r'<|y|<r,\: y\in C\}$ are bounded in $\mathcal{D}'_{E'_{\ast}}$ for each $r'>0$.
Then, the following three statements are equivalent:

\begin{itemize}
\item [$(i)$] $F$ satisfies
\begin{equation}
\label{bveq05} F(\: \cdot\:+iy)\in E', \  \  \ y\in C(r),
\end{equation}
and the bound
\begin{equation}
\label{bveq06} \|F(\: \cdot\:+iy)\|_{E'}\leq \frac{M}{(d_{C(r)}(y))^{\kappa}}, \  \  \ y\in  C(r).
\end{equation}
\item [$(ii)$] $F$ has boundary values in $\mathcal{D}'_{E'_{\ast}}$, namely, there is $f\in\mathcal{D}'_{E'_{\ast}}$ such that
\begin{equation}
\label{bveq07}
f= \underset{y\in C}{\lim_{y\to 0}}\: F(\:\cdot\:+iy)\  \  \  \mbox{strongly in }\mathcal{D}'_{E'_{\ast}}.
\end{equation}
\item [$(iii)$] The set $\{F(\: \cdot\:+iy): \: y\in C(r)\}$ is bounded in $\mathcal{D}'_{E'_{\ast}}$.
\end{itemize}
In addition, if any of these equivalent conditions is satisfied, then $F(\:\cdot\:+ y)\in E'_{\ast}$ for every $y\in C(r)$.
\end{theorem}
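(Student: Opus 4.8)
The plan is to prove the two equivalences $(i)\Leftrightarrow(iii)$ and $(ii)\Leftrightarrow(iii)$ separately, using (iii) as a hub, and then to read off the final assertion from (i). Throughout, the guiding principle is to test everything against $\mathcal{S}$ by convolution: by Theorem \ref{th structure} a family is bounded in $\mathcal{D}'_{E'_{\ast}}$ exactly when its convolutions with each $\psi\in\mathcal{S}$ are bounded in $E'$, and by Theorem \ref{sequences} the filter $\{F(\:\cdot\:+iy)\}$ (which has a countable basis as $y\to 0$ in $C$) converges in $\mathcal{D}'_{E'_{\ast}}$ exactly when $\psi\ast F(\:\cdot\:+iy)$ converges in $E'$ for every $\psi$. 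The single device that makes the singular factor $d(y)^{-\kappa}$ manageable is that, via the module estimate \eqref{estimateconv2}, one may transfer spatial derivatives off $F$ onto the test function: $\partial_x^\beta(\psi\ast F(\:\cdot\:+iy))=(\partial^\beta\psi)\ast F(\:\cdot\:+iy)$, so that \emph{every} derivative of $\psi\ast F(\:\cdot\:+iy)$ decays at the same rate $d(y)^{-\kappa}$, rather than at the worsening rate $d(y)^{-\kappa-|\beta|}$ furnished by Lemma \ref{bvl1}.

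For $(i)\Rightarrow(iii)$, which I expect to be the crux, I would fix a direction $\theta\in C$ and exploit that $d_{C}$ is concave and positively homogeneous, hence superadditive, giving $d_{C}(y+s\theta)\geq d_{C}(y)+s\,d_{C}(\theta)\geq s\delta$ with $\delta:=d_{C}(\theta)>0$. For fixed $\psi$ and small $t_{0}$ I would set $u(s)=\psi\ast F(\:\cdot\:+i(y+s\theta))$, an $E'$-valued smooth function by Lemma \ref{bvl1}, and Taylor-expand $u(0)$ about $s=t_{0}$ to an order $N>\kappa$. The derivative transfer above bounds $\|u^{(j)}(s)\|_{E'}\leq C_{j}M/(s\delta)^{\kappa}$ uniformly in $y$; hence the finite Taylor terms are controlled by $(t_{0}\delta)^{-\kappa}$ and, decisively, the integral remainder is dominated by $\int_{0}^{t_{0}}s^{N-1-\kappa}\,ds<\infty$ precisely because $N>\kappa$. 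This yields $\sup_{y}\|\psi\ast F(\:\cdot\:+iy)\|_{E'}<\infty$ for $|y|$ small; combining with the standing boundedness on the annuli $r'<|y|<r$ and invoking Theorem \ref{th structure} gives (iii).

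For $(iii)\Rightarrow(ii)$ I would first isolate a short lemma: if $g$ is an $E'$-valued holomorphic map on $T^{C(r)}$ whose \emph{gradient} is bounded up to the boundary, $\sup_{y}\|\nabla g(\:\cdot\:+iy)\|_{E'}<\infty$, then $g(\:\cdot\:+iy)$ is norm-Cauchy as $y\to 0$ in $C$. Indeed, joining $y,y'$ by the segment inside the convex cone and integrating $\partial_{y_{k}}g=i\,\partial_{x_{k}}g$ gives $\|g(\:\cdot\:+iy)-g(\:\cdot\:+iy')\|_{E'}\leq C|y-y'|\to 0$. Applying this to $g=\psi\ast F$—whose gradient is bounded exactly by the derivative-transfer trick together with (iii) applied to $\psi$ and to $\partial_{k}\psi$—shows $\psi\ast F(\:\cdot\:+iy)$ converges in $E'$, whence (ii) by Theorem \ref{sequences}. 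The reverse $(ii)\Rightarrow(iii)$ is soft: a convergent filter in the normed space $E'$ is bounded on a tail, so $\psi\ast F(\:\cdot\:+iy)$ is bounded near $0$, and the standing hypothesis supplies boundedness away from $0$; Theorem \ref{th structure} then returns (iii).

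To close the first equivalence I would deduce $(iii)\Rightarrow(i)$ by converting the derivatives of the structural representation into negative powers of $d(y)$. Writing $F(\:\cdot\:+iy)=\sum_{|\alpha|\leq N}f_{\alpha,y}^{(\alpha)}$ with $f_{\alpha,y}=F(\:\cdot\:+iy)\ast\check\varrho_{\alpha}\in E'$ uniformly bounded (Theorem \ref{th structure}), the Cauchy--Riemann equations let me replace each $\partial_{x}^{\alpha}$ by $(-i)^{|\alpha|}\partial_{y}^{\alpha}$ acting on the holomorphic $E'$-valued map $y\mapsto f_{\alpha,y}$, and the Cauchy estimates of Lemma \ref{bvl1} (with $\kappa_{1}=\kappa_{2}=0$) bound $\|\partial_{y}^{\alpha}f_{\alpha,y}\|_{E'}$ by $C_{\alpha}/d(y)^{|\alpha|}$. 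Summing gives both $F(\:\cdot\:+iy)\in E'$ and \eqref{bveq06} with $\kappa=N$. Finally, since any of the conditions forces (i), the map $\mathbf{F}(x+iy)=T_{x}(F(\:\cdot\:+iy))$ is $E'$-valued holomorphic by Lemma \ref{bvl1}, so $h\mapsto T_{h}F(\:\cdot\:+iy)=\mathbf{F}(h+iy)$ is $E'$-continuous at $h=0$; by the characterization $E'_{\ast}=\{f\in E':\lim_{h\to 0}\|T_{h}f-f\|_{E'}=0\}$ this is precisely $F(\:\cdot\:+iy)\in E'_{\ast}$. The main obstacle is $(i)\Rightarrow(iii)$: taming $d(y)^{-\kappa}$ at the boundary, which hinges on the interplay of the derivative-transfer identity with the superadditivity of $d_{C}$ in the Taylor remainder.
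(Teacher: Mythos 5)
Your proposal is correct, and it is organized genuinely differently from the paper's proof. The paper proves $(ii)\Rightarrow(iii)$ (soft), then $(i)\Rightarrow(ii)$ \emph{directly}: it decomposes $F=\sum_{|\alpha|\leq N}\partial_z^{\alpha}F_{\alpha}$ via Theorem \ref{th structure}, uses the pointwise bounds on the $F_\alpha$ to import the existence of boundary values in $\mathcal{S}'(\mathbb{R}^{n})$ from the classical literature, and then upgrades to convergence in $E'_{\ast}$ by a Stokes-theorem (Taylor) formula \`a la H\"ormander together with dominated convergence for Bochner integrals; finally $(iii)\Rightarrow(i)$ is done exactly as you do it (structure theorem plus Lemma \ref{bvl1} with $\kappa_1=\kappa_2=0$). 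You instead use $(iii)$ as a hub: your $(i)\Rightarrow(iii)$ is morally the same Taylor-with-remainder device as the paper's Stokes formula, but applied directly to $\psi\ast F$ (exploiting the derivative transfer $\partial^{\beta}(\psi\ast F)=(\partial^{\beta}\psi)\ast F$ and superadditivity of $d_{C}$) and used only to get \emph{boundedness}; and your $(iii)\Rightarrow(ii)$ via the gradient-bound/Lipschitz/completeness lemma is new relative to the paper --- it replaces the appeal to $\mathcal{S}'$-boundary-value theory and Bochner dominated convergence by the elementary fact that a norm-Cauchy net in the Banach space $E'$ converges, and it even yields a Lipschitz rate $\|\psi\ast F(\cdot+iy)-\psi\ast F(\cdot+iy')\|_{E'}\leq M_{\psi}|y-y'|$. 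Your proof of the final assertion ($F(\cdot+iy)\in E'_{\ast}$) via norm-continuity of $h\mapsto T_{h}F(\cdot+iy)=\mathbf{F}(h+iy)$ and the characterization $E'_{\ast}=\{f\in E':\lim_{h\to0}\|T_{h}f-f\|_{E'}=0\}$ is also a clean alternative to the paper's argument (which uses instead that $E'_{\ast}$ is closed and that derivatives of the $E'_{\ast}$-valued holomorphic maps $\mathbf{F}_{\alpha}$ stay in $E'_{\ast}$). Two small points you should patch: (1) the degenerate case $C=\mathbb{R}^{n}$ (which the paper admits and treats separately) must be excluded from your superadditivity argument, since then $C^{\ast}=\{0\}$ and $d_{C}$ is not given by the minimum formula; in that case $d_{C(r)}(y)=r-|y|$ and hypothesis $(i)$ already gives boundedness of $\|F(\cdot+iy)\|_{E'}$ for $|y|\leq r/2$, so $(iii)$ is immediate. (2) The $E'$-valued smoothness of $s\mapsto\psi\ast F(\cdot+i(y+s\theta))$ and the identities $\partial_{y_{k}}(\psi\ast F)=i(\partial_{k}\psi)\ast F$ should be anchored in Lemma \ref{bvl1} (holomorphy of $\mathbf{F}$) together with the boundedness of the convolution operator $\psi\ast(\cdot)$ on $E'$; this is routine and at the same level of rigor as the paper's own proof.
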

\begin{proof}
The implication $(ii)\Rightarrow(iii)$ is obvious.

$(i)\Rightarrow(ii)$. Assume (\ref{bveq05}) and (\ref{bveq06}). If $C=\mathbb{R}^{n}$, the result follows from Corollary \ref{bvc1}. Suppose then that $C\neq \mathbb{R}^{n}$ (i.e., $0\notin C$).  Thanks to Theorem \ref{th structure}, we can write
\begin{equation}
\label{bveq08}
F(z)=\sum_{|\alpha|\leq N} \partial ^{\alpha}_z F_{\alpha}(z), \  \  \  z\in T^{C(r)},
\end{equation}
where each $F_{\alpha}$ has the form
\begin{equation}
\label{bveq09}
F_{\alpha}(z)= (F(\:\cdot\:+iy)\ast \check{\varrho}_{\alpha})(x)=\int_{\operatorname*{supp}\varrho_{\alpha}}F_{\alpha}(z+\xi) \varrho_{\alpha}(\xi)d\xi \ \ \ (z=x+iy)
\end{equation} and each $\varrho_{\alpha}\in E$ is a continuous function of compact support. Thus, each $F_{\alpha}$ is also holomorphic on the tube $T^{C(r)}$, satisfies $F_{\alpha}(\:\cdot\:+iy)\in E'_{\ast}$ for every $y\in C(r)$ (because $\check{\varrho}_{\alpha}\in L^{1}_{\omega}$),
the $E'$-norm estimate (cf. (\ref{estimateconv2}))
\begin{equation}
\label{bveq010}
\|F_{\alpha}(\:\cdot\:+iy)\|_{E'}\leq \frac{M\|\varrho_{\alpha}\|_{1,\omega}}{(d_{C(r)}(y))^{\kappa}}, \ \ \ y\in C(r),
\end{equation}
and the pointwise estimate (cf. (\ref{estimateconv1}))
\begin{equation}
\label{bveq011}
|F_{\alpha}(x+iy)|\leq \frac{M \|\varrho_{\alpha}\|_{E}\:\omega(x)}{(d_{C(r)}(y))^{\kappa}}, \ \ \ x+iy\in T^{C(r)}.
\end{equation}
Making use of by Corollary \ref{bvc1}, the mappings $y\in C(r)\mapsto F_{\alpha}(\:\cdot\: +i y)\in E'_{\ast}$ are continuous. The pointwise estimate (\ref{bveq011}) implies that each $F_{\alpha}$ has boundary values in $\mathcal{S}'(\mathbb{R}^{n})$ \cite{C-M,Vcomplex}. Set
\begin{equation}
\label{bveq012}
f_{\alpha}=\underset{y\in C}{\lim_{y\to 0}}F_{\alpha}(\:\cdot\: +i y) \ \ \  \mbox{in } \mathcal{S}'(\mathbb{R}^{n}), \ \ |\alpha|\leq N.
\end{equation}
In view of (\ref{bveq08}), it suffices to show that each $f_{\alpha}\in\mathcal{D}'_{E'_{\ast}}$ and that the limit (\ref{bveq012}) actually holds in $\mathcal{D}'_{E'_{\ast}}$. We may assume that $\kappa\in\mathbb{N}$. Let $\psi\in\mathcal{S}(\mathbb{R}^{n})$ and write $\Psi(x,y)=\sum_{|\beta|\leq \kappa} \psi^{(\beta)}(x)(iy)^{\beta}/\beta!$. Pick $\theta\in C(r/4)$. Since $-\theta\notin C$, we can find $M_{1}$ such that $\lambda\leq M_{1}d_{C}(y+\lambda \theta)$ for every $y\in C$ and $\lambda>0$. In particular, $\lambda\leq M_{1}d_{C(r)}(y+\lambda \theta)$ for $\lambda\in (0,1)$ and $y\in C(r/4)$.  Applying the Stokes theorem as in \cite[p. 67]{hormander}, we can write
$$
f_{\alpha}\ast\psi= \Psi(\:\cdot\:,\theta)\ast F_{\alpha}(\cdot\: +i\theta)+\sum_{|\beta|= \kappa+1}\frac{(i\theta)^{\beta}(\kappa+1)}{\beta!}\int_{0}^{1}\lambda^{\kappa}(F_{\alpha}(\:\cdot\:+i\lambda\theta)\ast\psi^{(\beta)})\:d\lambda
$$
and, for $y\in C(r/4)$,
$$
F_{\alpha}(\: \cdot \: +iy)\ast\psi= \Psi(\:\cdot\:,\theta)\ast F_{\alpha}(\cdot\: +i\theta+iy)+\sum_{|\beta|= \kappa+1}\frac{(i\theta)^{\beta}(\kappa+1)}{\beta!}\int_{0}^{1}\lambda^{\kappa}(F_{\alpha}(\:\cdot\:+i\lambda\theta+iy)\ast\psi^{(\beta)})\:d\lambda,
$$
where the integrals are interpreted as $E'_{\ast}$-valued integrals in the Bochner sense. By Theorem \ref{sequences} and Corollary \ref{bvc1}, the net $\Psi(\:\cdot\:,\theta)\ast F_{\alpha}(\cdot\: +i\theta+iy)\to \Psi(\:\cdot\:,\theta)\ast F_{\alpha}(\cdot\: +i\theta)$ in $E'_{\ast}$. Furthermore, using the estimate (\ref{bveq010}), we majorize $\lambda^{\kappa}\|F_{\alpha}(\:\cdot\:+i\lambda\theta+iy)\ast\psi^{(\beta)}\|_{E'}\leq (M_{1})^{\kappa}M \|\psi^{(\beta)}\|_{1,\omega}\|\varrho_{\alpha}\|_{1,\omega}$ and the dominated convergence theorem for Bochner integrals thus yields
$$
f_{\alpha}\ast\psi=
\underset{y\in C}{\lim_{y\to0} }(F_{\alpha}(\cdot\: +iy)\ast\psi) \  \  \ \mbox{in } E'_{\ast}.
$$
Since this holds for every $\psi\in \mathcal{S}(\mathbb{R}^{n})$, Theorem \ref{sequences} implies
$$
f_{\alpha}=
\underset{y\in C}{\lim_{y\to0} }F_{\alpha}(\cdot\: +iy) \  \  \ \mbox{strongly in } \mathcal{D}'_{E'_{\ast}}
$$
and (\ref{bveq07}) follows at once.

$(iii)\Rightarrow(i)$. Using part $(iii)$ of Theorem \ref{th structure}, we can write $F$ as in (\ref{bveq08}) where each $F_{\alpha}$ is holomorphic in $T^{C(r)}$, $F_{\alpha}(\:\cdot\:+iy)\in E'_{\ast}$ and $\sup_{y\in C(r)}\|F_{\alpha}(\:\cdot\:+iy)\|_{E'}<\infty$. The assertion $(i)$ is a consequence of Lemma \ref{bvl1}. In addition, we get that the holomorphic function (\ref{bveq03}) actually takes values in $E'_{\ast}$. Thus $\mathbf{F}^{(\alpha)}_{\alpha}(z)\in E'_{\ast}$ for all $z\in T^{C(r)}$, whence $\mathbf{F}:T^{C(r)}\to E'_{\ast}$.
\end{proof}

\begin{corollary}
\label{bvc2} Let $V\subseteq \mathbb{R}^{n}$ be an open set and let $F$ be holomorphic in $T^{V}$. If $F(\:\cdot\:+iy)\in \mathcal{D}'_{E'_{\ast}}$ for all $y\in V$ and $\{F(\:\cdot\:+iy): \: y\in K\}$ is bounded in $\mathcal{D}'_{E'_{\ast}}$ for every compact subset $K\subset V$, then actually $F(\:\cdot\:+iy)\in E'_{\ast}$ for all $y\in V$, $\sup_{y\in K}\|F(\:\cdot\:+iy)\|_{E'}<\infty$ for every compact $K\subset V$, and the $E'_{\ast}$-valued function (\ref{bveq03}) is holomorphic in $T^{V}$.  If in addition $V\neq \mathbb{R}^{n}$ and the set $\{F(\:\cdot\:+iy): \: y\in V\}$ is bounded in $\mathcal{D}'_{E'_{\ast}}$, then there is $\kappa\geq 0$ such that $\sup_{y\in V}(d_{V}(y))^{\kappa}(1+d_{V}(y))^{-\tau}\|F(\:\cdot\:+iy)\|_{E'}<\infty$.
\end{corollary}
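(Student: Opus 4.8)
The plan is to run the same machinery used for the implication $(iii)\Rightarrow(i)$ of Theorem \ref{thbvw} and its concluding sentence, but now localized in the base $V$, and to treat the final growth estimate by applying that machinery globally. Since the first three assertions are local in $y$, I would fix $y_{0}\in V$ and choose an open set $W$ with $y_{0}\in W$ and $\overline{W}\subset V$ compact (a small Euclidean ball will do). The family $\{F(\:\cdot\:+iy):y\in\overline{W}\}$ is then bounded in $\mathcal{D}'_{E'_{\ast}}$, so Theorem \ref{th structure}$(iii)$ supplies $N$, continuous compactly supported $\varrho_{\alpha}\in E$, and $M>0$ (depending only on this family) with $F(\:\cdot\:+iy)=\sum_{|\alpha|\le N}F_{\alpha}^{(\alpha)}(\:\cdot\:+iy)$ for $y\in W$, where $F_{\alpha}(\:\cdot\:+iy)=F(\:\cdot\:+iy)\ast\check{\varrho}_{\alpha}\in E'_{\ast}$ and $\sup_{y\in W}\|F_{\alpha}(\:\cdot\:+iy)\|_{E'}\le M$. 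As in (\ref{bveq09}), each $F_{\alpha}(z)=\int F(z+\xi)\varrho_{\alpha}(\xi)\,d\xi$ is holomorphic on $T^{W}$, so Lemma \ref{bvl1} (with $\kappa_{1}=\kappa_{2}=0$ on the bounded base $W\ne\mathbb{R}^{n}$) gives $F_{\alpha}^{(\alpha)}(\:\cdot\:+iy)\in E'$ and the holomorphy of $\mathbf{F}_{\alpha}:T^{W}\to E'$. Hence $F(\:\cdot\:+iy)\in E'$; and for compact $K\subset W$ one has $d_{W}(y)\ge\delta>0$ on $K$, so (\ref{bveq02}) forces $\sup_{y\in K}\|F(\:\cdot\:+iy)\|_{E'}<\infty$, which is the local boundedness.

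The upgrade from $E'$ to $E'_{\ast}$ and the holomorphy of $\mathbf{F}$ rest on one observation. Because $F_{\alpha}(\:\cdot\:+iy)\in E'_{\ast}$ and $E'_{\ast}$ is translation-invariant, the $E'$-valued holomorphic map $\mathbf{F}_{\alpha}(x+iy)=T_{x}(F_{\alpha}(\:\cdot\:+iy))$ in fact takes values in the closed subspace $E'_{\ast}$, and is therefore holomorphic as an $E'_{\ast}$-valued map. Its complex derivatives then remain $E'_{\ast}$-valued and holomorphic, and evaluating at $x=0$ yields $F_{\alpha}^{(\alpha)}(\:\cdot\:+iy)=\partial_{z}^{\alpha}\mathbf{F}_{\alpha}(iy)\in E'_{\ast}$. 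Summing over $\alpha$ shows $F(\:\cdot\:+iy)\in E'_{\ast}$ and that $\mathbf{F}=\sum_{|\alpha|\le N}\partial_{z}^{\alpha}\mathbf{F}_{\alpha}$ is $E'_{\ast}$-valued holomorphic on $T^{W}$; since $y_{0}$ was arbitrary and holomorphy is a local property, $\mathbf{F}$ is holomorphic on all of $T^{V}$.

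For the additional global estimate I would repeat the decomposition, but now apply Theorem \ref{th structure}$(iii)$ to the whole bounded family $\{F(\:\cdot\:+iy):y\in V\}$, obtaining a single representation $F=\sum_{|\alpha|\le N}F_{\alpha}^{(\alpha)}$ with $\sup_{y\in V}\|F_{\alpha}(\:\cdot\:+iy)\|_{E'}\le M$. Lemma \ref{bvl1} on $T^{V}$ (again $\kappa_{1}=\kappa_{2}=0$, using $V\ne\mathbb{R}^{n}$ to guarantee $d_{V}(y)<\infty$) then gives $\|F_{\alpha}^{(\alpha)}(\:\cdot\:+iy)\|_{E'}\le C_{\alpha}(d_{V}(y))^{-|\alpha|}(1+d_{V}(y))^{\tau}$ for each $\alpha$. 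Summing and taking $\kappa=N$, the factor $(1+d_{V}(y))^{-\tau}$ appearing in the conclusion is precisely what cancels the translation-weight factor $(1+d_{V}(y))^{\tau}$ produced by the Cauchy estimates in (\ref{bveq02}), so that $(d_{V}(y))^{\kappa}(1+d_{V}(y))^{-\tau}\|F(\:\cdot\:+iy)\|_{E'}$ is majorized by $\sum_{|\alpha|\le N}C_{\alpha}(d_{V}(y))^{N-|\alpha|}$, which is controlled by the diameter of the base.

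The routine parts are the holomorphy of the convolutions $F_{\alpha}$ and the mechanical invocations of Lemma \ref{bvl1}. The step I expect to be the main obstacle is the bookkeeping in the last paragraph: one must check that the separate estimates for the pieces $F_{\alpha}^{(\alpha)}$ assemble into a single clean bound of the shape $(d_{V}(y))^{-\kappa}(1+d_{V}(y))^{\tau}$, with $\kappa$ taken equal to the order $N$ furnished by Theorem \ref{th structure} and with $(1+d_{V}(y))^{-\tau}$ absorbing the weight growth inherent to the non–translation-invariant dual $E'$. A secondary subtlety, already settled in the second paragraph, is the passage from $E'$ to the smaller $E'_{\ast}$: this is not automatic under differentiation and relies on reading $F_{\alpha}^{(\alpha)}$ as values of the $E'_{\ast}$-valued holomorphic function $\partial_{z}^{\alpha}\mathbf{F}_{\alpha}$, the decisive fact being that $E'_{\ast}$ is a closed, translation-invariant subspace of $E'$.
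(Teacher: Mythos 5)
Your argument is, in substance, the paper's argument: the published proof is a one-line reference to the implication $(iii)\Rightarrow(i)$ of Theorem \ref{thbvw}, i.e., precisely the combination you spell out — the decomposition $F(\:\cdot\:+iy)=\sum_{|\alpha|\le N}F_{\alpha}^{(\alpha)}(\:\cdot\:+iy)$ with $F_{\alpha}(\:\cdot\:+iy)=F(\:\cdot\:+iy)\ast\check{\varrho}_{\alpha}\in E'_{\ast}$ furnished by Theorem \ref{th structure}$(iii)$, the Cauchy estimates of Lemma \ref{bvl1} with $\kappa_{1}=\kappa_{2}=0$, and the observation that the $E'$-valued holomorphic map $\mathbf{F}_{\alpha}$ takes values in the closed, translation-invariant subspace $E'_{\ast}$, hence so do all of its derivatives. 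Your first two paragraphs (localization, the local $E'$-bounds, and the $E'\to E'_{\ast}$ upgrade) are correct and complete, and match what the paper intends.

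The gap is exactly where you feared, in the last paragraph, and it cannot be closed in the stated generality. The quantity $\sum_{|\alpha|\le N}C_{\alpha}(d_{V}(y))^{N-|\alpha|}$ is controlled by $\sup_{y\in V}d_{V}(y)$, not by the hypotheses: the second assertion assumes only $V\neq\mathbb{R}^{n}$, so $V$ may be a half-space, $d_{V}$ unbounded, and then the $\alpha=0$ term grows like $(d_{V}(y))^{N}$. In fact the estimate as printed fails in that generality: take $E=L^{1}(\mathbb{R})$ (so $\omega\equiv1$ and $\tau=0$), $V=(0,\infty)$, and $F(z)=1+z^{-N}$ with $N\ge 1$. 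Then $\{F(\:\cdot\:+iy):\,y>0\}$ is bounded in $\mathcal{D}'_{L^{\infty}}$ (for $\psi\in\mathcal{S}(\mathbb{R})$, $\psi\ast F(\:\cdot\:+iy)=\int\psi+c_{N}\,\psi^{(N-1)}\ast(\:\cdot\:+iy)^{-1}$ is uniformly bounded), yet $\|F(\:\cdot\:+iy)\|_{L^{\infty}}\to1$ as $y\to\infty$ while $\|F(\:\cdot\:+iy)\|_{L^{\infty}}\ge y^{-N}-1$ as $y\to0^{+}$, so no $\kappa\ge0$ makes $y^{\kappa}\|F(\:\cdot\:+iy)\|_{L^{\infty}}$ bounded. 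What your computation actually proves is $\sup_{y\in V}(d_{V}(y))^{\kappa}(1+d_{V}(y))^{-\tau-\kappa}\|F(\:\cdot\:+iy)\|_{E'}<\infty$ with $\kappa=N$ — equivalently a bound against the weight $\left(1+1/d_{V}(y)\right)^{\kappa}$ appearing in Definition \ref{defa2} — and this coincides with the stated inequality precisely when $\sup_{y\in V}d_{V}(y)<\infty$ (bounded bases, truncated cones, which is how the corollary is used later). So your write-up reproduces the paper's proof faithfully, defect included: the paper's ``exactly the same argument'' yields only this corrected estimate, and the corollary as printed should be read with that weight or with the extra hypothesis that $d_{V}$ is bounded on $V$.
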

\begin{proof} The first part of the corollary follows from the second one. Exactly the same argument from the proof of the implication $(iii)\Rightarrow(i)$ of Theorem \ref{thbvw} shows the second assertion.
\end{proof}

Using Theorem \ref{thbvw}, we can derive the following result.
\begin{corollary}
\label{bvc3} Let $X\subset \mathcal{S}'(\mathbb{R}^{n})$ be a Banach space. Assume that the inclusion mapping $X\to \mathcal{S}'(\mathbb{R}^{n})$ is continuous. Let $C$ be an open convex cone and $r>0$. If $F$ is holomorphic on the tube $T^{C(r)}$ and satisfies
$$
 F(\: \cdot\:+iy)\in X \ \mbox{ and } \   \|F(\: \cdot\:+iy)\|_{X}\leq \frac{M}{(d_{C(r)}(y))^{\kappa}}, \  \  \ y\in C(r),
$$
then $\displaystyle\underset{y\in C}{\lim_{y \to0}} F(\: \cdot\:+iy)$ exists in $\mathcal{S}'(\mathbb{R}^{n})$.
\end{corollary}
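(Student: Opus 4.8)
The plan is to reduce the statement to the translation-invariant setting by regularization, and then to remove the regularization through a factorization argument. First I would exploit the continuity of the embedding $X\to\mathcal{S}'(\mathbb{R}^{n})$. Since $X$ is Banach and $\mathcal{S}(\mathbb{R}^{n})$ is Fr\'echet, the separately continuous bilinear form $(g,\phi)\mapsto\langle g,\phi\rangle$ on $X\times\mathcal{S}(\mathbb{R}^{n})$ is jointly continuous, so there are $N\in\mathbb{N}$ and $C>0$ with $|\langle g,\phi\rangle|\leq C\|g\|_{X}\|\phi\|_{N}$ for all $g\in X$ and $\phi\in\mathcal{S}(\mathbb{R}^{n})$, where $\|\cdot\|_{N}$ is one of the standard seminorms defining the topology of $\mathcal{S}(\mathbb{R}^{n})$. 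Fixing $\psi\in\mathcal{S}(\mathbb{R}^{n})$ and setting $F_{\psi}:=F\ast\psi$ (a function holomorphic on $T^{C(r)}$), this yields the pointwise bound
$$|F_{\psi}(x+iy)|=\bigl|\langle F(\:\cdot\:+iy),T_{-x}\check{\psi}\rangle\bigr|\leq C\,\|F(\:\cdot\:+iy)\|_{X}\,\|T_{-x}\check\psi\|_{N}\leq \frac{C_{\psi}\,M\,(1+|x|)^{N}}{(d_{C(r)}(y))^{\kappa}},$$
in which, crucially, the exponent $N$ is independent of $\psi$.

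With the polynomially bounded weight $\eta(x)=(1+|x|)^{N}$ and $E=L^{1}_{\eta}$ (so that $E'=L^{\infty}_{\eta}$), the previous estimate states precisely that $F_{\psi}(\:\cdot\:+iy)\in E'$ with $\|F_{\psi}(\:\cdot\:+iy)\|_{E'}\leq C_{\psi}M/(d_{C(r)}(y))^{\kappa}$, i.e.\ $F_{\psi}$ satisfies a bound of the form \eqref{bveq06}. Running the mechanism behind the implication $(i)\Rightarrow(ii)$ of Theorem \ref{thbvw} — specifically the fact used at \eqref{bveq012} that a pointwise estimate of exactly this shape forces a boundary value in $\mathcal{S}'(\mathbb{R}^{n})$ (see \cite{C-M,Vcomplex}) — gives
$$f_{\psi}:=\underset{y\in C}{\lim_{y\to0}}\:F_{\psi}(\:\cdot\:+iy)\qquad\text{in }\mathcal{S}'(\mathbb{R}^{n}).$$
One should be aware that the full standing hypothesis of Theorem \ref{thbvw} (boundedness of the truncated-wedge sets in $\mathcal{D}'_{E'_{\ast}}$) need \emph{not} hold for $F_\psi$, since the $E'$-norm is only bounded above and may genuinely blow up near $\partial C$; this is why only the $\mathcal{S}'$-conclusion, and not the sharper $\mathcal{D}'_{E'_{\ast}}$-convergence, is extracted here.

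It remains to pass from the regularizations $F_{\psi}$ to $F$ itself, and this is the step I expect to be the main obstacle. The naive route — convolving with an approximate identity $\psi_{\varepsilon}$ and interchanging the limits $\varepsilon\to0$ and $y\to0$ — is blocked precisely because the hypothesis bounds $\|F(\:\cdot\:+iy)\|_{X}$ by $M/(d_{C(r)}(y))^{\kappa}$, which may tend to $\infty$ as $y\to0$, so the family $\{F(\:\cdot\:+iy)\}$ has no available equicontinuity in $\mathcal{S}'(\mathbb{R}^{n})$. Instead I would invoke the Dixmier--Malliavin factorization theorem, asserting that every $\phi\in\mathcal{S}(\mathbb{R}^{n})$ is a \emph{finite} sum $\phi=\sum_{i}\psi_{i}\ast\varphi_{i}$ with $\psi_{i},\varphi_{i}\in\mathcal{S}(\mathbb{R}^{n})$. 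For such a decomposition,
$$\langle F(\:\cdot\:+iy),\phi\rangle=\sum_{i}\langle F_{\check\psi_{i}}(\:\cdot\:+iy),\varphi_{i}\rangle\longrightarrow\sum_{i}\langle f_{\check\psi_{i}},\varphi_{i}\rangle\qquad(y\to0,\ y\in C),$$
so $\langle F(\:\cdot\:+iy),\phi\rangle$ converges for every test function $\phi$; because the decomposition is exact and finite, no uniform bound is required. Finally, applying the Banach--Steinhaus theorem along any sequence $y_{j}\to0$ in $C$ identifies the limit functional $f$ as an element of $\mathcal{S}'(\mathbb{R}^{n})$, the Montel property of $\mathcal{S}(\mathbb{R}^{n})$ upgrades the weak-$\ast$ convergence to convergence in the strong topology, and since the limit does not depend on the chosen sequence one concludes $\lim_{y\to0,\,y\in C}F(\:\cdot\:+iy)=f$ in $\mathcal{S}'(\mathbb{R}^{n})$, as claimed.
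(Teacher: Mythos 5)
Your argument is correct, but after its opening step it follows a genuinely different route from the paper's proof. Both proofs begin by converting continuity of $X\to\mathcal{S}'(\mathbb{R}^{n})$ into a single uniform estimate $|\langle g,\phi\rangle|\leq C\|g\|_{X}\,q_{N}(\phi)$: you get it from joint continuity of the separately continuous pairing on $X\times\mathcal{S}(\mathbb{R}^{n})$, while the paper derives the equivalent statement $\|f\|_{\mathcal{S}_{j_{0}}'(\mathbb{R}^{n})}\leq M_{1}\|f\|_{X}$ from the regularity of the inductive limit $\mathcal{S}'(\mathbb{R}^{n})=\operatorname*{ind}\lim_{j}\mathcal{S}_{j}'(\mathbb{R}^{n})$. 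At that point the paper is essentially finished: it observes that $\mathcal{S}_{j_{0}}(\mathbb{R}^{n})$ is itself a translation-invariant Banach space of tempered distributions and applies Theorem \ref{thbvw} with $E=\mathcal{S}_{j_{0}}(\mathbb{R}^{n})$, so the corollary is a two-line specialization which moreover yields \emph{strong} convergence in the space $\mathcal{D}'_{E'_{\ast}}$ attached to that choice of $E$. You instead scalarize ($F\mapsto F\ast\psi$), invoke only the classical pointwise-bound boundary value theorem (the same fact the paper cites at \eqref{bveq012}), and then remove the mollification exactly, with no limit interchange, via the Dixmier--Malliavin factorization of $\mathcal{S}(\mathbb{R}^{n})$, finishing with Banach--Steinhaus and the Montel property. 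What your route buys: it bypasses the vector-valued machinery behind Theorem \ref{thbvw} (the structure theorem, the Stokes/Bochner-integral argument), and, as you rightly note, it never relies on the standing boundedness hypothesis in the statement of Theorem \ref{thbvw}; your caution there is warranted, and the same remark in principle applies to the paper's own application of that theorem --- which is nonetheless sound, because the proof of $(i)\Rightarrow(ii)$ only ever uses the weighted set $\{(d_{C(r)}(y))^{\kappa}F(\:\cdot\:+iy):y\in C(r)\}$, bounded in $E'$, when appealing to Theorem \ref{th structure}. What your route costs: it imports the nontrivial Dixmier--Malliavin theorem, which is external to the paper, and it recovers only convergence in $\mathcal{S}'(\mathbb{R}^{n})$, rather than the finer $\mathcal{D}'_{E'_{\ast}}$-convergence (and the accompanying structural information on the boundary value) that the paper's specialization gives for free.
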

\begin{proof}
Let $\mathcal{S}_{j}(\mathbb{R}^{n})$ be the completion of 
$\mathcal{S}(\mathbb{R}^{n})$ in the norm $$q_j(\varphi)=\underset{|\alpha|\leq j}{\sup_{x\in \mathbb{R}^n}}(1+|x|)^j|\varphi^{(\alpha)}(x)|, \ \ \ j\in\mathbb{N}.
$$ Notice that each $\mathcal{S}_{j}(\mathbb{R}^{n})$ is a translation-invariant Banach spaces of tempered distributions. The embeddings $\mathcal{S}_{j+1}(\mathbb{R}^{n})\hookrightarrow\mathcal{S}_{j}(\mathbb{R}^{n})$  are compact, $\mathcal{S}(\mathbb{R}^{n})=\operatorname*{proj} \lim_{j\in \mathbb{N}}\mathcal{S}_{j}(\mathbb{R}^{n})$, and hence $\mathcal{S}'(\mathbb{R}^{n})=\operatorname*{ind}\lim_{j\in\mathbb{N}}\mathcal{S}_{j}'(\mathbb{R}^{n})$ is a regular inductive limit of Banach spaces. Thus, there are $M_1>0$ and $j_{0}\in\mathbb{N}$ such that $\|f\|_{\mathcal{S}_{j_{0}}'(\mathbb{R}^{n})}\leq M_{1}\|f\|_{X}$, for all $f\in X$. The assertion then follows by applying Theorem \ref{thbvw} with $E'=\mathcal{S}_{j_{0}}'(\mathbb{R}^{n})$.
\end{proof}

Observe that Corollary \ref{bvc3} provides sufficient conditions for the existence of boundary values in $\mathcal{S}'(\mathbb{R}^{n})$ in terms of rather general norms; however, in contrast with Theorem \ref{thbvw}, very little can be said about the boundary distribution $f=\lim_{y\in C\to 0} F(\: \cdot\:+iy)$ unless the Banach space $X$ possesses a richer structure. It should also be noticed that, as well-known, the holomorphic function $F$ is uniquely determined by its distributional boundary values $f$.

 We now turn our attention to holomorphic functions satisfying global estimates over a tube having a full open convex cone as base. We need to introduce some notation in order to move further.
Let $C\subset \mathbb{R}^{n}$ be an open convex cone. Set $\mathcal{S}'(C^\ast+\overline{B}(a))=\{g\in\mathcal{S}'(\mathbb{R}^{n}):\:\operatorname*{supp}g \subseteq C^{\ast}+ \overline{B}(a)\}$. The Laplace transform of $g\in\mathcal{S}'(C^\ast+\overline{B}(a))$ is defined \cite{V} as the holomorphic function
$$
\mathcal{L}\left\{g;z\right\}=\left\langle g(\xi),e^{iz\cdot \xi}\right\rangle, \  \  \ z\in T^{C}.
$$
The above distributional evaluation is well-defined because  $\mathcal{S}'(C^\ast+\overline{B}(a))$ is canonically isomorphic to the dual of the function space $\mathcal{S}(C^\ast+\overline{B}(a))$ (cf. \cite{Vcomplex,vladimirov-d-z}).

We are interested in the class of holomorphic functions $F:T^{C}\to \mathbb{C}$ that satisfy the following two conditions:
\begin{equation}
\label{bveq013} F(\: \cdot\:+iy)\in E', \  \  \ \mbox{for all }y\in C,
\end{equation}
and the estimate (for some constants $M$, $m$, and $k$)
\begin{equation}
\label{bveq014} \|F(\: \cdot\:+iy)\|_{E'}\leq M (1+|y|)^{m}e^{a|y|}\left(1+\frac{1}{d_{C}(y)}\right)^{k}, \  \  \ y\in C.
\end{equation}
Because of Corollary \ref{bvc2}, the membership relation (\ref{bveq013}) is equivalent to $F(\: \cdot \:+iy)\in E'_{\ast}$.

We now show that these holomorphic functions are in one-to-one correspondence with those elements of $\mathcal{D}'_{E'_{\ast}}$ having Fourier transforms with supports in the set $C^\ast+\overline{B}(a)$. We work with the constants in the Fourier transform as
$$\hat{\varphi}(\xi)=\int_{\mathbb{R}^n}e^{-i\xi\cdot x} \varphi(x)dx, \ \ \ \varphi\in\mathcal{S}(\mathbb{R}^{n}).$$
The next theorem extends various results by Carmichael \cite{carmichael83,carmichael87} and Vladimirov \cite{V} (obtained by them in the particular cases when $E'=L^{p}$ or when $E'$ is an $L^{2}$ based Sobolev space).
\begin{theorem}\label{analyticc} Let $C\subset \mathbb{R}^{n}$ be an open convex cone and let $a\geq 0$. If
$f\in \mathcal{D}_{E'_{\ast}}'$ is such that $\hat{f}\in\mathcal{S}'(C^\ast+\overline{B}(a))$, then the holomorphic function
\begin{equation}
\label{bveq015} F(z)=(2\pi)^{-n}\mathcal{L}\{\hat{f};z\},  \  \  \  z\in T^{C},
\end{equation}
satisfies (\ref{bveq013}), (\ref{bveq014}) and (\ref{bveq07}).

Conversely, if $F$ is a holomorphic function on $T^{C}$ that satisfies the condition (\ref{bveq013}) and for every subcone $C'\Subset C$ and $\varepsilon>0$ there are $M=M(C',\varepsilon), \kappa=\kappa(C',\varepsilon)>0$ such that
\begin{equation}
\label{bveq016} \|F(\: \cdot\:+iy)\|_{E'}\leq M\frac{e^{(a+\varepsilon)|y|}}{|y|^{\kappa}}, \  \  \ y\in C',
\end{equation}
then there is $f\in \mathcal{D}'_{E'_{\ast}}$ with $ \operatorname*{supp} \hat{f}\subseteq C^\ast+\overline{B}(a)$ such that (\ref{bveq015}) holds.

\end{theorem}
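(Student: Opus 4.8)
The plan is to prove the two implications separately, in each case reducing to machinery already at hand: Theorem \ref{th structure}, the module estimate \eqref{estimateconv2}, Theorem \ref{thbvw}, and---for the converse---the classical $\mathcal{S}'$ theory of Laplace transforms. For the forward implication, suppose $f\in\mathcal{D}'_{E'_{\ast}}$ with $\operatorname{supp}\hat{f}\subseteq C^{\ast}+\overline{B}(a)$. Using Theorem \ref{th structure} I would write $f=\sum_{|\alpha|\leq N}f_{\alpha}^{(\alpha)}$ with $f_{\alpha}=f\ast\check{\varrho}_{\alpha}\in E'_{\ast}$ and $\|f_{\alpha}\|_{E'}\leq M_{0}$; since $\hat{f}_{\alpha}=\hat{f}\,\widehat{\check{\varrho}_{\alpha}}$, each $\hat{f}_{\alpha}$ is again supported in $C^{\ast}+\overline{B}(a)$. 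Differentiating the kernel $e^{iz\cdot\xi}$ inside the Laplace pairing gives $F=\sum_{\alpha}\partial_{z}^{\alpha}F_{\alpha}$ with $F_{\alpha}(z)=(2\pi)^{-n}\mathcal{L}\{\hat{f}_{\alpha};z\}$, so the holomorphy of $F$ on $T^{C}$ is automatic and it suffices to estimate $F_{\alpha}(\:\cdot\:+iy)=\mathcal{F}^{-1}[\hat{f}_{\alpha}\,e^{-y\cdot\xi}]$. Writing this as a convolution $f_{\alpha}\ast K_{y}$ against a suitable kernel and invoking \eqref{estimateconv2} reduces matters to bounding a Beurling-weighted $L^{1}$ norm of $K_{y}$ and finitely many of its derivatives.

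This weighted kernel bound is the computational core and the source of the three features of \eqref{bveq014}. I would factor $e^{-y\cdot\xi}=e^{a|y|}\,e^{-y\cdot\xi-a|y|}$, noting that the support function of $C^{\ast}+\overline{B}(a)$ gives $\sup_{\xi\in C^{\ast}+\overline{B}(a)}e^{-y\cdot\xi}=e^{a|y|}$ for $y\in C$, so the remainder $e^{-y\cdot\xi-a|y|}$ is bounded by $1$ on $\operatorname{supp}\hat{f}_{\alpha}$ and decays like $e^{-|\xi|d_{C}(y)}$ into the cone. Taking $K_{y}=e^{a|y|}\mathcal{F}^{-1}[\tilde{\phi}_{y}]$, where $\tilde{\phi}_{y}$ is a \emph{bounded} smooth extension of this remainder (rather than a naive cutoff of the exponential itself, which would only yield $e^{(a+\delta)|y|}$), one keeps exactly the rate $e^{a|y|}$; the $\xi$-derivatives of $\tilde{\phi}_{y}$ produce factors of $|y|$, hence the polynomial weight $(1+|y|)^{m}$ together with the contribution of $\tau$, while integrating the cone-decay against powers of $\xi$ yields the negative powers of $d_{C}(y)$, i.e. the factor $(1+1/d_{C}(y))^{k}$. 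Summing over $|\alpha|\leq N$ gives \eqref{bveq013} (in fact membership in $E'_{\ast}$) and the estimate \eqref{bveq014}. For the boundary value \eqref{bveq07}, restricting \eqref{bveq014} to a truncation $C(r)$ turns it into a bound of the form \eqref{bveq06}, so Theorem \ref{thbvw}, implication $(i)\Rightarrow(ii)$, furnishes a limit $g\in\mathcal{D}'_{E'_{\ast}}$ (the standing boundedness hypothesis of that theorem being verified through part $(ii)$ of Theorem \ref{th structure}); that $g=f$ is read off on the Fourier side, since $\widehat{F(\:\cdot\:+iy)}=\hat{f}\,e^{-y\cdot\xi}\to\hat{f}$ in $\mathcal{S}'$ as $y\to0$ because $\hat{f}$ is supported in $C^{\ast}+\overline{B}(a)$.

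For the converse, I would first push the hypothesis into $\mathcal{S}'$. Dualizing the dense inclusion $\mathcal{S}(\mathbb{R}^{n})\hookrightarrow E$ yields a continuous embedding $E'\hookrightarrow\mathcal{S}'(\mathbb{R}^{n})$ which, by the regularity of $\mathcal{S}'=\operatorname*{ind}\lim_{j}\mathcal{S}'_{j}$ used in Corollary \ref{bvc3}, factors through some $\mathcal{S}'_{j_{0}}$; thus \eqref{bveq016} gives $\|F(\:\cdot\:+iy)\|_{\mathcal{S}'_{j_{0}}}\leq M_{1}e^{(a+\varepsilon)|y|}|y|^{-\kappa}$ on each $C'\Subset C$. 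The classical Laplace transform theorem in $\mathcal{S}'$ (Vladimirov \cite{V}) then yields $f\in\mathcal{S}'$ with $\operatorname{supp}\hat{f}\subseteq C^{\ast}+\overline{B}(a)$ and $F=(2\pi)^{-n}\mathcal{L}\{\hat{f}\}$, which is \eqref{bveq015}. It remains to upgrade $f$ to $\mathcal{D}'_{E'_{\ast}}$: by Corollary \ref{bvc2} we already have $F(\:\cdot\:+iy)\in E'_{\ast}$, and on a truncated subcone $C'(r)$ the bound \eqref{bveq016} reads $\|F(\:\cdot\:+iy)\|_{E'}\leq M/(d_{C'(r)}(y))^{\kappa}$, because $d_{C'(r)}(y)\leq|y|$ and $e^{(a+\varepsilon)|y|}$ is bounded there; hence hypothesis $(i)$ of Theorem \ref{thbvw} holds and produces a boundary value in $\mathcal{D}'_{E'_{\ast}}$, which by uniqueness of $\mathcal{S}'$ limits must coincide with $f$. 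Therefore $f\in\mathcal{D}'_{E'_{\ast}}$, as required.

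I expect the main obstacle to be the sharp weighted kernel estimate in the forward direction, and within it the construction of the bounded extension $\tilde{\phi}_{y}$: it must simultaneously reproduce $e^{-y\cdot\xi-a|y|}$ on $\operatorname{supp}\hat{f}$ so as to keep the exact rate $e^{a|y|}$, remain uniformly bounded so as not to reintroduce exponential growth, inherit the cone-decay that yields the $d_{C}(y)^{-k}$ factor, and have $\xi$-derivatives growing only polynomially in $|y|$. (One sees the necessity of this care already in the $L^{2}$ model case, where Plancherel gives the exact rate $e^{a|y|}$ for free, whereas a crude multiplier estimate for general $E'$ would degrade it to $e^{(a+\delta)|y|}$.) By contrast the converse is comparatively soft, its content being the bookkeeping that lets Theorem \ref{thbvw} and the known $\mathcal{S}'$ theory cooperate.
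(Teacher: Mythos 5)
Your converse contains the one genuine logical gap. It hinges on applying ``the classical Laplace transform theorem in $\mathcal{S}'$ (Vladimirov)'' to $F$ after transferring the bound (\ref{bveq016}) to the norm of $\mathcal{S}'_{j_0}$. But Vladimirov's theorem (the one the paper also invokes, \cite[p.\ 167]{V}) requires \emph{pointwise} bounds of the type $|F(x+iy)|\leq M(1+|x|)^{m}e^{(a+\varepsilon)|y|}|y|^{-\kappa}$ on $T^{C'}$; a bound on $\|F(\:\cdot\:+iy)\|_{\mathcal{S}'_{j_{0}}}$ is a distributional bound and does not by itself control pointwise values of a holomorphic function. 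The statement you actually need --- that a holomorphic family obeying (\ref{bveq018}) in the norm of a Banach space $X\hookrightarrow\mathcal{S}'(\mathbb{R}^{n})$ is a Laplace transform with the stated support --- is precisely Corollary \ref{bvc5}, which the paper \emph{deduces from} Theorem \ref{analyticc}; invoking it (or an unproven norm-bound version of Vladimirov's theorem) at this point is circular. The gap is bridgeable in two ways: (a) regularize first, as the paper does --- write $F=\sum_{|\alpha|\leq N}\partial_{z}^{\alpha}F_{\alpha}$ with $F_{\alpha}(\:\cdot\:+iy)=F(\:\cdot\:+iy)\ast\check{\varrho}_{\alpha}$ via Theorem \ref{th structure}, so that (\ref{estimateconv1}) yields honest pointwise bounds $|F_{\alpha}(x+iy)|\leq M_{\alpha}\,\omega(x)\,e^{(a+\varepsilon)|y|}|y|^{-\kappa}$, apply Vladimirov to each $F_{\alpha}$, upgrade each resulting $f_{\alpha}$ to $\mathcal{D}'_{E'_{\ast}}$ by Theorem \ref{thbvw}, and assemble $f=\sum_{|\alpha|\leq N}f_{\alpha}^{(\alpha)}$; or (b) convert your norm bounds into pointwise bounds on $F$ itself by a mean-value-property argument, pairing $F(\:\cdot\:+iy)$ with translated bumps whose $\mathcal{S}_{j_{0}}$-norms grow like $(1+|x|)^{j_{0}}$. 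Either way, a nontrivial step is missing as written; your final upgrading step (Corollary \ref{bvc2} plus Theorem \ref{thbvw} on truncated subcones, identification of the limit in $\mathcal{S}'$) is fine once the existence of $f$ is secured.

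In the forward direction your architecture is sound and genuinely different from the paper's: you decompose $f$ by Theorem \ref{th structure} and reduce (\ref{bveq014}) to a weighted $L^{1}$ multiplier estimate via (\ref{estimateconv2}). However, the entire analytic content is then concentrated in the construction of the bounded extension $\tilde{\phi}_{y}$ and the bound $\|\mathcal{F}^{-1}[\tilde{\phi}_{y}]\|_{1,\check{\omega}}\leq C(1+|y|)^{m}\left(1+1/d_{C}(y)\right)^{k}$, which you describe qualitatively but never carry out; as a proof this is a black box exactly at the crux. (It can be executed: mollify the indicator function of $C^{\ast}+\overline{B}\left(a+1/(1+|y|)\right)$ at scale $1/(1+|y|)$, so the extension is supported in $C^{\ast}+\overline{B}\left(a+2/(1+|y|)\right)$, stays bounded by $e^{2}$, has derivatives growing like powers of $1+|y|$, and retains the decay $e^{-|\xi_{1}|d_{C}(y)}$; then control $\mathcal{F}^{-1}$ through $L_{\xi}^{1}$-norms of $n+1+\lceil\tau\rceil$ derivatives.) The paper sidesteps the multiplier problem entirely: it lifts $f$ to $\mathbf{f}=\iota(f)\in\mathcal{S}'(\mathbb{R}^{n},E'_{\ast})$, views $\hat{\mathbf{f}}$ as a continuous map $\mathcal{S}(C^{\ast}+\overline{B}(a))\to E'_{\ast}$, and gets (\ref{bveq017}) --- hence (\ref{bveq013}) and (\ref{bveq014}) --- in one line by evaluating the continuity seminorm at $\phi(\xi)=e^{iz\cdot\xi}$ and using $y\cdot\xi_{1}\geq|\xi_{1}|\,d_{C}(y)$ on $C^{\ast}$; the boundary relation (\ref{bveq07}) then drops out of $\iota(F(\:\cdot\:+iy))\to\iota(f)$ together with Theorem \ref{sequences}. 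Your alternative derivation of (\ref{bveq07}) through Theorem \ref{thbvw} and Fourier-side identification is correct, conditionally on (\ref{bveq013})--(\ref{bveq014}) being established.
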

\begin{proof}
Assume that $f\in \mathcal{D}_{E'_{\ast}}'$ is such that $\operatorname*{supp}\hat{f}\subseteq C^\ast+\overline{B}(a)$. Set $\mathbf{f}=\iota(f)\in \mathcal{S}'(\mathbb{R}^{n},E'_{\ast})$ (cf. (\ref{eq:8})). Then,
$\hat{\mathbf{f}}\in\mathcal{S}'(C^\ast+\overline{B}(a),E'_{\ast})=\left\{\mathbf{g}\in\mathcal{S}'(\mathbb{R}^{n},E'_{\ast}):\:\operatorname*{supp} \mathbf{g}\subseteq C^\ast+\overline{B}(a)\right\}.$ The same procedure used to identify $\mathcal{S}'(C^\ast+\overline{B}(a))$ with the dual of $\mathcal{S}(C^\ast+\overline{B}(a))$ \cite{vladimirov-d-z} shows that $\mathcal{S}'(C^\ast+\overline{B}(a),E'_{\ast})$ is canonically isomorphic to $L_{b}(\mathcal{S}(C^\ast+\overline{B}(a)),E'_{\ast})$. So we identify the latter two spaces. This allows us to define the Laplace transform of the $E'_{\ast}$-valued distribution $(2\pi)^{-n}\hat{\mathbf{f}}\in\mathcal{S}'(C^\ast+\overline{B}(a),E'_{\ast})$ as
$\mathbf{F}(z):=(2\pi)^{-n}\mathcal{L}\{\hat{\mathbf{f}};z\}=(2\pi)^{-n}\langle \hat{\mathbf{f}},e^{iz\cdot \xi}\rangle\in E'_{\ast},$ $z\in T^{C}.$
Clearly, $\mathbf{F}$ is holomorphic in $z\in T^{C}$ with values in $E'_{\ast}$ and $\mathbf{F}(z)\to\mathbf{f}$ as $z\in C\to 0$ in $\mathcal{S}'(\mathbb{R}^{n},E'_{\ast})$. It is easy to see that $\mathbf{F}(x+iy)=T_{x}F(\:\cdot\:+iy)\in E'_{\ast}$ and we obtain at once (\ref{bveq013}) by setting $x=0$. Furthermore, $\iota(F(\:\cdot\:+iy))=\mathbf{F}(\:\cdot\:+iy)\to\mathbf{f}=\iota(f)$ in $E'_{\ast}$; hence, Theorem \ref{sequences} yields the limit relation (\ref{bveq07}). Next, one readily sees that $\mathbf{F}(z)$ satisfies the estimate
\begin{equation}
\label{bveq017}
\left\|\mathbf{F}(z)\right\|_{E'}\leq M (1+|z|)^{m}e^{a|\Im m\:z|}\left(1+\frac{1}{d_{C}\left(\Im m\:z\right)}\right)^{k},  \  \  \  z\in T^{C},
\end{equation}
for some constants $m,k,M>0$. The bound (\ref{bveq014}) follows by setting $z=iy$ in (\ref{bveq017}). The proof of (\ref{bveq017}) is exactly the same as in the scalar-valued case. We give it for the sake of completeness. Since $\hat{\mathbf{f}}:\mathcal{S}(C^\ast+\overline{B}(a))\to E'_{\ast}$ is continuous, there are constants $k\in\mathbb{N}$ and $M_1>0$ such that
$$
(2\pi)^{-n}\|\langle \hat{\mathbf{f}},\phi\rangle\|_{E'}\leq M_1\underset{\:\xi\in C^\ast+\overline{B}(a)}{\sup_{\: 0\leq |\alpha|\leq k}}(1+|\xi|)^{k}\left|\phi^{(\alpha)}(\xi)\right|, \  \  \  \forall\phi\in \mathcal{S}(C^\ast+\overline{B}(a)).
$$
Setting $\phi(\xi)=e^{i z\cdot\xi}$, $z=x+iy\in T^{C}$, in the above inequality, we obtain
\begin{align*}
\left\|\mathbf{F}(z)\right\|_{E'}&\leq M_1(1+|z|)^{k} \underset{|\xi_{2}|\leq a}{\sup_{\xi_{1}\in C^{\ast}}}(1+|\xi_{1}+\xi_{2}|)^{k}e^{-y\cdot\xi_{1}}e^{-y\cdot \xi_{2}}
\\
&
\leq (a+1)^{k}M_1(1+|z|)^{k}e^{a|y|} \sup_{\xi\in C^{\ast}}(1+|\xi|)^{k}e^{-|\xi|d_{C}(y)}
\\
&
\leq M e^{a|y|}(1+|z|)^{k}\left(1+\frac{1}{d_{C}\left(y\right)}\right)^{k},
\end{align*}
which gives (\ref{bveq017}) with $M=(a+1)^{k}M_1\sup_{\xi\in C^{\ast}}(1+|\xi|)^{k}e^{-|\xi|}$ and $m=k$.

Conversely, assume (\ref{bveq013}) and (\ref{bveq016}). As in the proof of Theorem \ref{thbvw}, we express $F$ as in (\ref{bveq08}), where each $F_{\alpha}$ is holomorphic in $T^{C}$ and satisfies: $F_{\alpha}(\:\cdot\:+iy)\in E'_{\ast}$ for $y\in C$ and the estimates
$$
\|F_{\alpha}(\:\cdot\:+iy)\|_{E'}\leq M_{\alpha}\frac{e^{(a+\varepsilon)|y|}}{|y|^{\kappa}} \ \mbox{ and } |F_{\alpha}(x+iy)|\leq M_{\alpha}\frac{\omega(x)e^{(a+\varepsilon)|y|}}{|y|^{\kappa}}, \ \ \ x+iy\in T^{C'},
$$
where the constants $M_{\alpha}$ and $\kappa$ are only dependent on the subcone $C'\Subset C$ and $\varepsilon$. The pointwise estimate and Vladimirov's theorem \cite[p. 167]{V} imply that there are $f_{\alpha}\in \mathcal{S}'(\mathbb{R}^{n})$ with $\operatorname*{supp}\hat{f}_{\alpha}\subseteq C^\ast+\overline{B}(a)$ such that $F_{\alpha}(z)=(2\pi)^{-n}\mathcal{L}\{\hat{f}_{\alpha};z\}$. Theorem \ref{thbvw} gives $f_{\alpha}\in \mathcal{D}'_{E'_{\ast}}$. Hence, (\ref{bveq015}) holds with $f=\sum_{|\alpha|\leq N}f^{(\alpha)}_{\alpha}$. This completes the proof.
\end{proof}

Theorem \ref{analyticc} leads to the following general criterion for concluding that a holomorphic function is the Laplace transform of a tempered distribution. The proof goes in the same lines as that of Corollary \ref{bvc3} and we therefore omit it.
\begin{corollary}
\label{bvc5} Let $X\subset \mathcal{S}'(\mathbb{R}^{n})$ be a Banach space for which the inclusion mapping $X\to \mathcal{S}'(\mathbb{R}^{n})$ is continuous and let $C$ be an open convex cone. If $F$ is holomorphic on the tube $T^{C}$ and satisfies
\begin{equation}\label{bveq018}
 F(\: \cdot\:+iy)\in X \ \mbox{ and } \   \|F(\: \cdot\:+iy)\|_{X}\leq M(C',\varepsilon)\frac{e^{(a+\varepsilon)y}}{|y|^{\kappa(C',\varepsilon)}}, \  \  \ y\in C',
\end{equation}
for any subcone $C'\Subset C$ and $\varepsilon>0$ , then there is $g\in \mathcal{S}'(C^\ast+\overline{B}(a))$ such that $F(z)=\mathcal{L}\{g;z\}$.
\end{corollary}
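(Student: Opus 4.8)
The plan is to reduce the assertion to the converse direction of Theorem \ref{analyticc} by realizing the abstract Banach space $X$ inside a concrete scale of translation-invariant Banach spaces of tempered distributions, exactly as in the proof of Corollary \ref{bvc3}. First I would recall the spaces $\mathcal{S}_{j}(\mathbb{R}^{n})$, the completions of $\mathcal{S}(\mathbb{R}^{n})$ in the norms $q_{j}$ introduced there, together with the facts that each $\mathcal{S}_{j}(\mathbb{R}^{n})$ is a translation-invariant Banach space of tempered distributions and that $\mathcal{S}'(\mathbb{R}^{n})=\operatorname*{ind}\lim_{j\in\mathbb{N}}\mathcal{S}_{j}'(\mathbb{R}^{n})$ is a regular inductive limit of Banach spaces. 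Since the inclusion $X\to\mathcal{S}'(\mathbb{R}^{n})$ is continuous and $X$ is a Fr\'echet (indeed Banach) space, Grothendieck's factorization theorem yields $j_{0}\in\mathbb{N}$ and $M_{1}>0$ such that $\|h\|_{\mathcal{S}_{j_{0}}'}\leq M_{1}\|h\|_{X}$ for every $h\in X$; in particular $X$ embeds continuously into $\mathcal{S}_{j_{0}}'(\mathbb{R}^{n})$.

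Next I would set $E:=\mathcal{S}_{j_{0}}(\mathbb{R}^{n})$, so that $E'=\mathcal{S}_{j_{0}}'(\mathbb{R}^{n})$, and transfer the hypotheses to this $E'$. From $F(\:\cdot\:+iy)\in X\subseteq E'$ one obtains (\ref{bveq013}), while the bound (\ref{bveq018}) combined with the factorization inequality gives, for every $C'\Subset C$ and $\varepsilon>0$,
$$
\|F(\:\cdot\:+iy)\|_{E'}\leq M_{1}M(C',\varepsilon)\frac{e^{(a+\varepsilon)|y|}}{|y|^{\kappa(C',\varepsilon)}}, \qquad y\in C',
$$
which is precisely the estimate (\ref{bveq016}). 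Thus $F$ meets the hypotheses of the converse part of Theorem \ref{analyticc} with this choice of $E$, and that theorem produces $f\in\mathcal{D}'_{E'_{\ast}}$ with $\operatorname*{supp}\hat{f}\subseteq C^{\ast}+\overline{B}(a)$ and $F(z)=(2\pi)^{-n}\mathcal{L}\{\hat{f};z\}$ on $T^{C}$.

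Finally, since $\mathcal{D}'_{E'_{\ast}}\to\mathcal{S}'(\mathbb{R}^{n})$ by (\ref{eqemb2}), the distribution $g:=(2\pi)^{-n}\hat{f}$ lies in $\mathcal{S}'(\mathbb{R}^{n})$ and satisfies $\operatorname*{supp}g\subseteq C^{\ast}+\overline{B}(a)$, so $g\in\mathcal{S}'(C^{\ast}+\overline{B}(a))$; by the normalization of the Laplace transform, $F(z)=\mathcal{L}\{g;z\}$, which is the desired conclusion. I expect the only genuine point requiring care to be the factorization step---locating a single member $\mathcal{S}_{j_{0}}'$ of the scale whose norm dominates $\|\cdot\|_{X}$---and this is exactly the ingredient already carried out in Corollary \ref{bvc3}; once it is in place, the remainder is a verbatim transcription of the growth estimates followed by an application of Theorem \ref{analyticc}, which is why the argument runs parallel to that of Corollary \ref{bvc3}.
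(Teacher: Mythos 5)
Your proposal is correct and is essentially the argument the paper intends: the paper omits the proof, stating it "goes in the same lines as that of Corollary \ref{bvc3}," which is exactly your reduction to the converse part of Theorem \ref{analyticc} via the scale $\mathcal{S}_{j}(\mathbb{R}^{n})$ and the embedding $X\to\mathcal{S}_{j_{0}}'(\mathbb{R}^{n})$. The only cosmetic difference is that you invoke Grothendieck's factorization theorem where the paper obtains the same inequality from the regularity of the inductive limit $\mathcal{S}'(\mathbb{R}^{n})=\operatorname*{ind}\lim_{j}\mathcal{S}_{j}'(\mathbb{R}^{n})$; both yield the identical localization step.
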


We also obtain the following corollary, a result of Paley-Wiener type.
\begin{corollary}
\label{bvc6} A necessary and sufficient condition for $f\in\mathcal{D}'_{E'_{\ast}}$ to have $\operatorname*{supp}\hat{f}\subset \overline{B}(a)$ is that $f$ is the restriction to $\mathbb{R}^{n}$ of an entire function $F$ that satisfies $F(\:\cdot\:+iy)\in E'$ for all $y\in\mathbb{R}^{n}$ and the estimate $\sup_{y\in\mathbb{R}^{n}}(1+|y|)^{-m}e^{-a|y|}\|F(\:\cdot\:+iy)\|_{E'}<\infty$ for some $m\geq0$ (or equivalently, $\sup_{y\in\mathbb{R}^{n}}e^{-(a+\varepsilon)|y|}\|F(\:\cdot\:+iy)\|_{E'}<\infty$ for each $\varepsilon>0$).
\end{corollary}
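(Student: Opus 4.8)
The plan is to deduce Corollary \ref{bvc6} from Theorem \ref{analyticc} by specializing to the degenerate base $C=\mathbb{R}^{n}$ (permitted, as noted in Section \ref{preli}), for which $T^{C}=\mathbb{C}^{n}$ and ``boundary values'' are genuine restrictions to $\mathbb{R}^{n}$. Two degeneracies make the general statement collapse onto the corollary. First, the conjugate cone degenerates to $(\mathbb{R}^{n})^{\ast}=\{0\}$, so $C^{\ast}+\overline{B}(a)=\overline{B}(a)$ and the hypothesis $\operatorname*{supp}\hat{f}\subseteq C^{\ast}+\overline{B}(a)$ of Theorem \ref{analyticc} is exactly $\operatorname*{supp}\hat{f}\subseteq\overline{B}(a)$. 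Second, $d_{\mathbb{R}^{n}}(y)=\operatorname*{dist}(y,\partial\mathbb{R}^{n})=+\infty$, so the distance factor $(1+1/d_{C}(y))^{k}$ in (\ref{bveq014}) is identically $1$ and that bound reduces to $\|F(\cdot+iy)\|_{E'}\leq M(1+|y|)^{m}e^{a|y|}$, i.e.\ to the polynomially-corrected estimate in the statement.

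For necessity, I would feed an $f\in\mathcal{D}'_{E'_{\ast}}$ with $\operatorname*{supp}\hat{f}\subseteq\overline{B}(a)$ into the direct half of Theorem \ref{analyticc} taken with $C=\mathbb{R}^{n}$. This yields the entire function $F(z)=(2\pi)^{-n}\mathcal{L}\{\hat{f};z\}$ satisfying (\ref{bveq013}) together with the reduced bound just described. That $F$ restricts to $f$ on $\mathbb{R}^{n}$ is immediate from Fourier inversion, since $F(x)=(2\pi)^{-n}\langle\hat{f}(\xi),e^{ix\cdot\xi}\rangle=f(x)$ (it also follows from the limit relation (\ref{bveq07})).

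For sufficiency, assume $f=F|_{\mathbb{R}^{n}}$ with $F$ entire, $F(\cdot+iy)\in E'$, and the polynomial bound holding. The only thing to check is the hypothesis (\ref{bveq016}) of the converse half of Theorem \ref{analyticc} with $C=\mathbb{R}^{n}$, and this is a one-line absorption: for each $\varepsilon>0$ there is $C_{\varepsilon}$ with $(1+|y|)^{m}\leq C_{\varepsilon}e^{\varepsilon|y|}$, so $\|F(\cdot+iy)\|_{E'}\leq MC_{\varepsilon}e^{(a+\varepsilon)|y|}$, which is (\ref{bveq016}) with $\kappa=0$; being global in $y$, it holds a fortiori on every subcone $C'\Subset\mathbb{R}^{n}$. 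The converse half then returns a $g\in\mathcal{D}'_{E'_{\ast}}$ with $\operatorname*{supp}\hat{g}\subseteq\overline{B}(a)$ and $F(z)=(2\pi)^{-n}\mathcal{L}\{\hat{g};z\}$, and Fourier inversion again identifies $g=F|_{\mathbb{R}^{n}}=f$.

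It remains to address the parenthetical equivalence between the polynomial form $\sup_{y}(1+|y|)^{-m}e^{-a|y|}\|F(\cdot+iy)\|_{E'}<\infty$ (for some $m$) and the exponential form $\sup_{y}e^{-(a+\varepsilon)|y|}\|F(\cdot+iy)\|_{E'}<\infty$ (for each $\varepsilon>0$). The implication from the polynomial form to the exponential form is the same elementary absorption as above. The reverse implication is the step I expect to be the genuine obstacle: it is not accessible by direct estimation, because the exponential form is merely a family of bounds of order $a+\varepsilon$ with no polynomial control at the critical rate $a$. I would instead route it through the theorem: the exponential form still verifies (\ref{bveq016}) with $\kappa=0$, hence the converse half of Theorem \ref{analyticc} gives $f\in\mathcal{D}'_{E'_{\ast}}$ with $\operatorname*{supp}\hat{f}\subseteq\overline{B}(a)$, and then the necessity direction hands back the polynomial form. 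Thus both estimates are equivalent precisely because each is equivalent to the support condition, and the corollary holds with either one.
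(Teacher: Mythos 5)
Your argument is correct in substance and follows the same overall reduction as the paper: both directions are extracted from Theorem \ref{analyticc}, and your treatment of the parenthetical equivalence (routing the hard implication through the support condition rather than attempting a direct estimate at the critical rate $a$) is exactly what the paper's proof does implicitly. The genuine difference is \emph{where} you evaluate the theorem. You apply both halves at the degenerate base $C=\mathbb{R}^{n}$, whereas the paper never does: for sufficiency it restricts $F$ to $T^{C}$ for every \emph{proper} open convex cone $C$, applies the converse half there to get $\operatorname*{supp}\hat{f}\subset\overline{B}(a)+C^{\ast}$, and concludes with the geometric identity $\bigcap_{C}\bigl(\overline{B}(a)+C^{\ast}\bigr)=\overline{B}(a)$ (note that if the degenerate cone were being used, this intersection step would be pointless); for necessity it reruns the Laplace-transform construction from the proof of Theorem \ref{analyticc}, which is effectively what your application at $C=\mathbb{R}^{n}$ amounts to, since that computation goes through verbatim when $C^{\ast}=\{0\}$. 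The reason for the paper's detour is that Theorem \ref{analyticc} at $C=\mathbb{R}^{n}$ is not unambiguously available: the bound (\ref{bveq014}) involves $d_{C}(y)$, which is meaningless when $\partial C=\emptyset$; the proof of the converse half rests on Vladimirov's theorem, whose validity at the degenerate base is precisely a Paley--Wiener--Schwartz statement that would have to be checked separately; and the paper itself regards Corollary \ref{bvc6} as the \emph{substitute} for the $C=\mathbb{R}^{n}$ case of Theorem \ref{analyticc} (see the remark following Definition \ref{defa2}), so quoting that case in the proof of the corollary skirts circularity. Your route is defensible if one reads the convention of Section \ref{preli} (``$C$ may be $\mathbb{R}^{n}$'') into the statement of Theorem \ref{analyticc} and accepts its proof in that case, but the paper's intersection argument over proper cones is the cleaner and safer device, and it is the natural repair for your sufficiency step should the degenerate case be challenged.
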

\begin{proof} If $\sup_{y\in\mathbb{R}^{n}}(1+|y|)^{-m}e^{-a|y|}\|F(\:\cdot\:+iy)\|_{E'}<\infty$ for some $m\geq0$, then $\operatorname*{supp}\hat{f}\subset \overline{B}(a)+C^{\ast}$ for every open convex cone and $\bigcap_{C}(\overline{B}(a)+C^{\ast})=\overline{B}(a)$. The other direction can be established as in the proof of Theorem \ref{analyticc}.
\end{proof}

\section{Analytic representations}\label{analytic representations}

The results from  Section \ref{boundary values} enable us to obtain analytic representations of arbitrary elements of $\mathcal D'_{E'_*}$. We need to make use of the following convolution property of $\mathcal D'_{E'_*}$. Recall that $\mathcal{O}'_{C}(\mathbb{R}^{n})$ stands for the space of convolutors of $\mathcal{S}'(\mathbb{R}^{n})$. We have shown in \cite[Prop. 11]{dpv} that $\mathcal D'_{E'_*}$ is closed under convolution with elements of $\mathcal{O}'_{C}(\mathbb{R}^{n})$; furthermore, the convolution mapping $\ast: \mathcal{O}'_{C}(\mathbb{R}^{n}) \times \mathcal D'_{E'_*}\to \mathcal D'_{E'_*}$ is continuous.

Let $C_{1},C_{2},\dots,C_{m}$ be open convex cones of $\mathbb{R}^{n}$. We assume that $\mathbb{R}^{n}=\bigcup_{j=1}^{m}C^\ast_{j}$. For example, the $C_{j}$ might be the $2^{n}$ pairwise disjoint open orthants of $\mathbb{R}^{n}$.

\begin{lemma} \label{arl1} Given $a>0$, there are convolutors $\chi_{1},\chi_{2},\dots,\chi_{m}\in \mathcal{O}'_{C}(\mathbb{R}^{n})$ such that $\delta=\sum_{j=1}^{m}\chi_{j}$ and $\operatorname{supp}\chi_{j}\subset C_{j}^{\ast}+\overline{B}(a)$.
\end{lemma}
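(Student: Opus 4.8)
The plan is to obtain the $\chi_{j}$ from their Fourier transforms, reading the support requirement off a Paley--Wiener theorem for cones. With the paper's convention $\hat{\varphi}(\xi)=\int_{\mathbb{R}^{n}}e^{-i\xi\cdot x}\varphi(x)\,dx$, a tempered distribution $\chi$ has $\operatorname{supp}\chi\subseteq C_{j}^{\ast}+\overline{B}(a)$ exactly when the map $\xi\mapsto\langle\chi,e^{-i\xi\cdot x}\rangle$ continues to a function $G_{j}$ holomorphic on the tube $\mathbb{R}^{n}+i(-C_{j})$ and satisfying a bound $|G_{j}(\xi+i\eta)|\leq K(1+|\xi+i\eta|)^{N}e^{a|\eta|}$ for $\eta\in -C_{j}$; this is the conic Paley--Wiener--Schwartz theorem recorded in \cite[p.~167]{V}. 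The polynomial growth on the real slice $\eta=0$ moreover places $G_{j}\in\mathcal{O}_{M}(\mathbb{R}^{n})$, i.e.\ $\chi\in\mathcal{O}'_{C}(\mathbb{R}^{n})$. Thus the whole problem is reduced to producing holomorphic functions $G_{1},\dots,G_{m}$, each of the indicated exponential type on its own tube $\mathbb{R}^{n}+i(-C_{j})$, whose boundary values add up to the constant $1$ on $\mathbb{R}^{n}$: putting $\chi_{j}=\mathcal{F}^{-1}G_{j}$ then forces $\operatorname{supp}\chi_{j}\subseteq C_{j}^{\ast}+\overline{B}(a)$ and $\chi_{j}\in\mathcal{O}'_{C}$, while $\sum_{j}G_{j}\equiv 1=\hat{\delta}$ delivers $\delta=\sum_{j}\chi_{j}$.

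The construction of such a holomorphic resolution of the identity is the heart of the matter. Since $\bigcup_{j}C_{j}^{\ast}=\mathbb{R}^{n}$, the open Minkowski sums $U_{j}:=C_{j}^{\ast}+B(a)$ form a finite open cover of $\mathbb{R}^{n}$; fix a smooth partition of unity $\{\theta_{j}\}$ subordinate to it, homogeneous of degree $0$ outside a fixed ball, so that $\theta_{j}\in\mathcal{O}_{M}$ and $\sum_{j}\theta_{j}\equiv1$. These $\theta_{j}$ localize the constant $1$ to the fattened dual cones on the real space only; to gain the support statement for $\chi_{j}$ I would holomorphically continue the decomposition into the tubes $\mathbb{R}^{n}+i(-C_{j})$ by means of the Cauchy kernels of those tubes, in the spirit of the Vladimirov--Bochner calculus \cite{V}, thereby replacing each $\theta_{j}$ by a genuinely tube-holomorphic $G_{j}$ of exponential type $a$ while redistributing the continuation errors among the remaining indices so as to keep $\sum_{j}G_{j}\equiv1$. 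Here the freedom granted by choosing $a>0$ is essential: it is the bounded fattening $\overline{B}(a)$ that allows the directional partition to be smoothed into holomorphic boundary values of finite exponential type.

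I expect the main obstacle to be this last continuation step, because the three demands on the $G_{j}$ pull in different directions. Tube holomorphy, with imaginary part confined to $-C_{j}$, is what pins the support of $\chi_{j}$ to the conic direction of $C_{j}^{\ast}$; the bound $e^{a|\eta|}$ is what caps the fattening at $\overline{B}(a)$; and the exact identity $\sum_{j}G_{j}\equiv1$ is what upgrades $\sum_{j}\chi_{j}$ from an approximate unit to $\delta$ itself. The first two requirements are local and can be arranged cone by cone, but the third couples all the $G_{j}$ globally, and reconciling it with the holomorphic continuation is the delicate point; it is precisely at this step that $a>0$ is used. Once the $G_{j}$ are secured, the verification that $\chi_{j}=\mathcal{F}^{-1}G_{j}\in\mathcal{O}'_{C}$ have supports in $C_{j}^{\ast}+\overline{B}(a)$ and sum to $\delta$ is immediate from the Paley--Wiener characterization recalled in the first paragraph.
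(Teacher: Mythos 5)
There is a genuine gap, and it sits exactly where you placed your hopes. Your argument reduces the lemma to producing a ``holomorphic resolution of the identity'': functions $G_{j}$ holomorphic on the tubes $\mathbb{R}^{n}+i(-C_{j})$, of exponential type $a$, with $\sum_{j}G_{j}\equiv 1$ on $\mathbb{R}^{n}$. But this object is never constructed: smoothing a degree-zero-homogeneous partition $\{\theta_{j}\}$ by Cauchy kernels of the tubes does not in any straightforward way yield boundary values of tube-holomorphic functions with type capped at $a$, and ``redistributing the continuation errors among the remaining indices so as to keep $\sum_{j}G_{j}\equiv 1$'' is precisely the hard part — a Martineau-type decomposition problem — which you acknowledge but do not solve. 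As it stands, the proposal is a program whose central step is deferred, so it does not prove the lemma.

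More fundamentally, you have placed the support condition on the wrong side of the Fourier transform. The way the lemma is used in Theorem \ref{arth1} is through $f_{j}=\chi_{j}\ast f$ and the requirement $\operatorname{supp}\hat{f}_{j}=\operatorname{supp}(\hat{\chi}_{j}\hat{f})\subset C_{j}^{\ast}+\overline{B}(a)$; what is needed (and what the paper's proof delivers) is $\operatorname{supp}\hat{\chi}_{j}\subset C_{j}^{\ast}+\overline{B}(a)$, the statement's notation notwithstanding. Note that the literal reading you adopted, $\operatorname{supp}\chi_{j}\subset C_{j}^{\ast}+\overline{B}(a)$, is trivially satisfiable by $\chi_{1}=\delta$ and $\chi_{2}=\dots=\chi_{m}=0$, since $0\in C_{j}^{\ast}+\overline{B}(a)$ — a warning sign that the elaborate Paley--Wiener machinery is aimed at the wrong target. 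Under the correct reading no holomorphy is needed at all. Following Vladimirov, the paper takes $\rho_{j}\in C^{\infty}(\mathbb{R}^{n})$ with $\rho_{j}=1$ on $C_{j}^{\ast}$, $\operatorname{supp}\rho_{j}\subset C_{j}^{\ast}+\overline{B}(a)$, $0\leq\rho_{j}\leq1$, and all derivatives bounded; since $\bigcup_{j}C_{j}^{\ast}=\mathbb{R}^{n}$ forces $\sum_{j}\rho_{j}\geq 1$ everywhere, one may simply divide and set $\hat{\chi}_{\nu}=\rho_{\nu}/\bigl(\sum_{j}\rho_{j}\bigr)\in\mathcal{O}_{C}(\mathbb{R}^{n})\subset\mathcal{O}_{M}(\mathbb{R}^{n})$, whence $\chi_{\nu}\in\mathcal{O}'_{C}(\mathbb{R}^{n})$, $\sum_{\nu}\hat{\chi}_{\nu}\equiv1$ gives $\delta=\sum_{\nu}\chi_{\nu}$ exactly, and the supports lie where they are needed. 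The normalization by $\sum_{j}\rho_{j}$ achieves in one line the exact identity you tried to obtain by holomorphic continuation, and it costs nothing because only membership of $\hat{\chi}_{\nu}$ in $\mathcal{O}_{M}$ is required, never tube holomorphy of exponential type; the Paley--Wiener/Laplace-transform considerations enter the paper only afterwards, via Theorem \ref{analyticc} applied to $\hat{f}_{j}$, not in this lemma.
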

\begin{proof} As in \cite[p. 7]{V}, there are $\rho_{1},\dots,\rho_{m}\in C^{\infty}(\mathbb{R}^{n})$ such that
$\operatorname*{supp}\rho_{j}\subset C^{\ast}_{j}+\overline{B}(a)$, $0\leq\rho_{j}\leq 1$, $\rho_{j}(x)=1$ for $x\in C^{\ast}_{j}$, and $\sup_{x\in \mathbb{R}^{n}}|\rho_{j}^{(\alpha)}(x)|\leq M_{\alpha} a^{-|\alpha|}$, $j=1,2,\dots,m$. The distributions $\chi_{\nu}$ given in Fourier side as
$
\hat{\chi}_{\nu}= \rho_{\nu}/(\sum_{j=1}^{m}\rho_{j}) \in \mathcal{O}_{C}(\mathbb{R}^{n})\subset \mathcal{O}_{M}(\mathbb{R}^{n}),$ $\nu=1,2,\dots,m,$
satisfy the requirements.

\end{proof}

We now show that every element of $\mathcal{D}'_{E'_{\ast}}$  can be represented as the sum of boundary values of holomorphic functions.
\begin{theorem}
\label{arth1} Every $f\in \mathcal{D}'_{E'_{\ast}}$ admits the boundary value representation
\begin{equation}\label{areq1}
f=\sum_{j=1}^{m}\underset{y\in C_{j}}{\lim_{y\to 0}}F_{j}(\:\cdot\: +iy) \ \ \ \mbox{strongly in }\mathcal{D}'_{E'_{\ast}},
\end{equation}
where each $F_{j}$ is holomorphic in the tube $T^{C_{j}}$.
\end{theorem}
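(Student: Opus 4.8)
The plan is to decompose $f$ using the partition of the Dirac delta furnished by Lemma \ref{arl1}. Fix some $a>0$ and take $\chi_1,\dots,\chi_m\in\mathcal{O}'_C(\mathbb{R}^n)$ as provided by that lemma, so that $\delta=\sum_{j=1}^m\chi_j$ with $\operatorname{supp}\chi_j\subset C_j^\ast+\overline{B}(a)$. Since convolution with elements of $\mathcal{O}'_C(\mathbb{R}^n)$ maps $\mathcal{D}'_{E'_\ast}$ continuously into itself, setting $f_j:=\chi_j\ast f\in\mathcal{D}'_{E'_\ast}$ yields $f=\delta\ast f=\sum_{j=1}^m f_j$. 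The whole point of the partition is that $\widehat{f_j}=\widehat{\chi_j}\,\widehat{f}$ has $\operatorname{supp}\widehat{f_j}\subseteq\operatorname{supp}\widehat{\chi_j}\subseteq C_j^\ast+\overline{B}(a)$, which puts each $f_j$ exactly in the situation governed by the direct implication of Theorem \ref{analyticc}.

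Next I would apply the first (direct) half of Theorem \ref{analyticc} to each $f_j$ separately. Because $f_j\in\mathcal{D}'_{E'_\ast}$ and $\widehat{f_j}\in\mathcal{S}'(C_j^\ast+\overline{B}(a))$, that theorem produces a holomorphic function on the tube $T^{C_j}$, namely
\begin{equation*}
F_j(z)=(2\pi)^{-n}\mathcal{L}\{\widehat{f_j};z\},\qquad z\in T^{C_j},
\end{equation*}
which satisfies the growth estimate (\ref{bveq014}) and, most importantly, recovers $f_j$ as a strong boundary value: $f_j=\lim_{y\in C_j\to 0}F_j(\:\cdot\:+iy)$ in $\mathcal{D}'_{E'_\ast}$, as guaranteed by (\ref{bveq07}). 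Summing these $m$ boundary value limits over $j$ and using that addition is continuous on $\mathcal{D}'_{E'_\ast}$ gives exactly the representation (\ref{areq1}).

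The argument is essentially an assembly of machinery already established, so there is no deep obstacle; the only point requiring a moment of care is the verification that $\operatorname{supp}\widehat{f_j}\subseteq C_j^\ast+\overline{B}(a)$, i.e. that the support of $\widehat{\chi_j}\,\widehat{f}$ is contained in the support of $\widehat{\chi_j}$. This is the standard fact that the Fourier support of a product (here a convolution on the physical side) is contained in the support of either factor, so each $f_j$ genuinely falls within the hypotheses of Theorem \ref{analyticc}. Once this is noted, the holomorphy of each $F_j$ on $T^{C_j}$ and the strong convergence of the sum are immediate from the cited results.
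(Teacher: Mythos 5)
Your proposal is correct and is essentially identical to the paper's own proof: decompose $f=\sum_j \chi_j\ast f$ via Lemma \ref{arl1}, invoke the continuity of $\ast:\mathcal{O}'_{C}(\mathbb{R}^{n})\times\mathcal{D}'_{E'_{\ast}}\to\mathcal{D}'_{E'_{\ast}}$ to get $f_j\in\mathcal{D}'_{E'_{\ast}}$, note $\operatorname{supp}\hat{f}_j\subseteq C_j^{\ast}+\overline{B}(a)$, and apply the direct half of Theorem \ref{analyticc}. Your side remark about the support transfer is the right reading: the relevant condition furnished by Lemma \ref{arl1} is on the Fourier side (its proof constructs $\hat{\chi}_{\nu}$ supported in $C_{\nu}^{\ast}+\overline{B}(a)$), which is exactly what makes $\operatorname{supp}\hat{f}_j\subseteq\operatorname{supp}\hat{\chi}_j\subseteq C_j^{\ast}+\overline{B}(a)$ go through.
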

\begin{proof} Set $f_{j}=\chi_{j}\ast f$ so that $f=\sum_{j=1}^{m}f_{j}$, where $\chi_{1},\dots,\chi_{m}\in \mathcal{O}'_{C}(\mathbb{R}^{n})$ are the distributions from Lemma \ref{arl1}. As mentioned above, by \cite[Prop. 11]{dpv}, each $f_{j}\in \mathcal{D}'_{E'_{\ast}}$. In addition, $\operatorname*{supp} \hat{f}_{j}\subset C_{j}^{\ast}+\overline{B}(a)$. Theorem \ref{analyticc} gives the representation (\ref{areq1}) with $F_{j}(z)=(2\pi)^{-n}\mathcal{L}\{\hat{f}_{j};z\}$.
\end{proof}

The analytic functions $F_{j}$ from Theorem \ref{arth1} of course have the properties (\ref{bveq013}) and (\ref{bveq014}) on the corresponding cone $C_{j}$.


\section{Edge of the wedge theorems}\label{edge of the wedge} 

 This section deals with $\mathcal{D}'_{E'_{\ast}}$-versions of edge of the wedge theorems. Our first results are of Epstein and Bogoliubov type and they are related to the following classes of holomorphic functions on tubes, whose definitions are motivated by Corollary \ref{bvc2}.

\begin{definition}
\label{defa1} Let $V\subseteq\mathbb{R}^{n}$ be open set. The vector space $\mathcal{O}_{E'}(T^{V})$ consists of all holomorphic functions $F$ on the tube $T^{V}=\mathbb{R}^{n}+iV$ satisfying $F(\:\cdot\:+iy)\in E'$ for all $y\in V$ and $\sup_{y\in K}\|F(\:\cdot\:+iy)\|_{E'}<\infty$ for every compact $K\subset V$. The space $\mathcal{O}^{\: b}_{\mathcal{D}'_{E'_{\ast}}}(T^{V})$ is defined as
$$
\mathcal{O}^{\:b}_{\mathcal{D}'_{E'_{\ast}}}(T^{V})=\{F\in \mathcal{O}_{E'}(T^{V}):\: \{F(\:\cdot\:+iy):\: y\in V\} \mbox{ is bounded in } \mathcal{D}'_{E'_{\ast}}\}.
$$
\end{definition}

It should be noticed that if $F\in\mathcal{O}^{\:b}_{\mathcal{D}'_{E'_{\ast}}}(T^{V})$ and $V$ is a truncated cone, then Theorem \ref{thbvw} guarantees that ${\lim_{y\in V\to0}}F(\:\cdot\:+iy)$ exists (strongly) in $\mathcal{D}'_{E'_{\ast}}$. 

We need the following lemma, which is a variant of a result shown by Rudin in \cite[Sect. 3]{rudin1971}.

\begin{lemma}\label{bvl2} Let $V_1$ and $V_2$ be open connected bounded subsets of $\mathbb{R}^{n}$ such that $0\in \partial V_{1}\cap \partial V_{2}$. Set $V=V_1\cup V_2$. Then, any function $F$ that is holomorphic in the tube $T^{V}$, continuous on $T^{V}\cup \mathbb{R}^{n}$, and satisfies $ \sup_{x+iy\in T^{V}} (1+|x|^{2})^{-N/2}|F(x+iy)|<\infty$, for some $N\geq 0$, extends to a function $\tilde{F}$, which is holomorphic in the tube $T^{\operatorname*{ch}(V)}$ and satisfies
\begin{equation}\label{bveq13}
\sup_{x+iy\in T^{\operatorname*{ch}(V)}} \frac{|\tilde{F}(x+iy)|}{(1+|x|^{2})^{nN/2}}\leq M_{N}\sup_{x+iy\in T^{V}} \frac{|F(x+iy)|}{(1+|x|^{2})^{N/2}},
\end{equation}
where the constant $M_{N}$ does not depend on $F$.
\end{lemma}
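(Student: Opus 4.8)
The plan is to reduce the $n$-dimensional edge-of-the-wedge/convexification statement to a one-variable Cauchy integral argument, in the spirit of Rudin's treatment in \cite{rudin1971}. The idea is that a point $w=\xi+i\eta\in T^{\operatorname{ch}(V)}$ has its imaginary part $\eta\in\operatorname{ch}(V)$ expressible, by Carath\'{e}odory's theorem, as a convex combination $\eta=\sum_{k} t_k y_k$ of finitely many points $y_k\in V$, with at most $n+1$ of them. Since $F$ is already holomorphic on $T^V$ and continuous up to $\mathbb{R}^n$, and since $0\in\partial V_1\cap\partial V_2$ forces $V=V_1\cup V_2$ to sit ``on both sides'' of the edge $\mathbb{R}^n$, one gets holomorphic extension across the common boundary piece. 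First I would set up the extension $\tilde F$ by a Cauchy-type representation: for fixed real directions, write $\tilde F(w)$ as an iterated contour integral of $F$ along paths whose imaginary parts trace out segments inside $V$, using the continuity on $\mathbb{R}^n$ to glue the pieces coming from $V_1$ and from $V_2$. The boundedness hypothesis $\sup_{T^V}(1+|x|^2)^{-N/2}|F|<\infty$ guarantees the defining integrals converge and are independent of the admissible paths chosen, which is what makes $\tilde F$ single-valued and holomorphic on all of $T^{\operatorname{ch}(V)}$.

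The quantitative bound \eqref{bveq13} is where the factor $nN/2$ in the exponent (versus $N/2$ for $F$) originates, and obtaining it cleanly is the main obstacle. The mechanism is that each contour-integration step in a complex variable $z_j$ moves the real part and can only be controlled by estimating $|F|$ on shifted real arguments; bounding $(1+|x|^2)^{N/2}$ after such a shift costs a factor that is polynomial in $|x|$, and iterating this over the $n$ coordinates (or over the $\le n+1$ simplex vertices from Carath\'{e}odory) multiplies the polynomial growth, producing the power $nN$ in place of $N$. Concretely, I would estimate the integrand at each stage using $\sup_{T^V}(1+|x|^2)^{-N/2}|F(x+iy)|=:A$, choose contours lying in $V$ at distance bounded below away from $\partial V$ (possible because $V$ is open and the relevant segments can be taken in the interior), and track how $(1+|x|^2)$ transforms under the real shifts induced by moving the contour. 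Keeping $V_1,V_2$ bounded is used here to keep all contours within a fixed compact region so that the distance to $\partial V$ and the lengths of the contours contribute only to the universal constant $M_N$, not to the $x$-dependence.

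The key technical point I would isolate as a sublemma is the path-independence: I would verify that any two admissible polygonal paths in $V$ joining a base point to the fiber over $\eta$ give the same value of the iterated integral, using Cauchy's theorem in each complex variable together with the holomorphy of $F$ on $T^V$ and its continuity on $T^V\cup\mathbb{R}^n$ to pass contours through the edge $\mathbb{R}^n$. This is precisely the place where the two-piece structure $V=V_1\cup V_2$ with $0$ on both boundaries is essential: it allows a contour starting in $V_1$ to be continuously deformed, via the real boundary, into one ending in $V_2$, which is what realizes the convex hull $\operatorname{ch}(V)$ as the natural domain of holomorphy. Once path-independence and the growth estimate are in hand, $\tilde F$ restricts to $F$ on $T^V$ by construction, and \eqref{bveq13} follows by collecting the per-step polynomial factors into the single constant $M_N$, which visibly depends only on $N$, $n$, and the fixed geometry of $V_1,V_2$ (hence not on $F$).
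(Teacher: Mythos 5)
Your central mechanism — defining $\tilde{F}$ by iterated one-variable contour integrals and proving single-valuedness by deforming contours ``through the edge $\mathbb{R}^{n}$'' via Cauchy's theorem — does not work when $n\geq 2$, and it is precisely the hard content of the lemma that this mechanism is being asked to carry. Any contour-deformation or Cauchy-theorem argument requires the integrand to be holomorphic on an open set containing the region swept by the deformation. Here $F$ is defined only on $T^{V}\cup\mathbb{R}^{n}$: a point of $T^{\operatorname{ch}(V)}$ whose imaginary part lies outside $V\cup\{0\}$ has no complex neighborhood contained in $T^{V}\cup\mathbb{R}^{n}$, and a deformation carrying a contour from $T^{V_{1}}$ to $T^{V_{2}}$ necessarily sweeps through exactly such points, where $F$ is simply undefined. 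In one variable this is rescued by Painlev\'{e}/Morera, because the edge $\mathbb{R}$ has real codimension one in $\mathbb{C}$; but for $n\geq2$ the edge $\mathbb{R}^{n}$ is a totally real set of codimension $n$, continuity there yields no Morera-type conclusion, and this is exactly why edge-of-the-wedge theorems require genuinely different tools (Fourier--Laplace representations, $H^{2}$ theory, or the continuity principle supplemented by a separate local extension lemma at the edge). Your ``sublemma'' on path independence is therefore not a technical verification: it is equivalent to the theorem itself, and the proposal offers no viable route to it. The quantitative part is also unfounded: shifting $x$ along bounded contours changes constants, not polynomial degrees, so your accounting cannot produce the exponent $nN$.

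For contrast, the paper's proof runs as follows. Since $V$ is bounded, pick $\lambda$ large and divide by $Q_{\lambda}(z)=\prod_{j=1}^{n}(z_{j}+i\lambda)^{N+2}$, which is zero-free on $T^{\operatorname{ch}(V)}$ and dominates the growth of $F$; then $G=F/Q_{\lambda}$ satisfies the hypotheses of the $L^{2}$ edge-of-the-wedge/tube theorem (the Stein--Weiss argument), giving a holomorphic extension $\tilde{G}$ to $T^{\operatorname{ch}(V)}$, and one sets $\tilde{F}=Q_{\lambda}\tilde{G}$. The bound \eqref{bveq13} is obtained first for $N=0$ by showing $\tilde{F}(T^{\operatorname{ch}(V)})\subseteq F(T^{V}\cup\mathbb{R}^{n})$: if $\zeta$ were a value of $\tilde{F}$ omitted by $F$, then $J=1/(F-\zeta)$ would be holomorphic on $T^{V}$ and continuous on $T^{V}\cup\mathbb{R}^{n}$, hence would itself extend holomorphically to $T^{\operatorname{ch}(V)}$, and $\tilde{J}(\tilde{F}-\zeta)\equiv 1$ by analytic continuation, contradicting that $\tilde{F}$ attains $\zeta$. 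The general case follows by applying this to $F/\prod_{j=1}^{n}(z_{j}+i\lambda)^{N}$; the true origin of the exponent $nN$ is the passage from the product weight $\prod_{j=1}^{n}(1+|x_{j}|)^{N}$ back to the radial weight $(1+|x|^{2})^{nN/2}$, not any accumulation of shift losses. Your proposal would need to be restructured around one of these genuine extension mechanisms before the remaining details could even be discussed.
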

\begin{remark}
\label{repsteinlemma}
If $V_{1}$ and $V_{2}$ are truncated cones, then the holomorphic function $\tilde{F}$ continuously extends on $T^{\operatorname*{ch}(V)}\cup \mathbb{R}^{n}$, as follows from Epstein's edge of the wedge theorem (cf. \cite[Sect. 11]{rudin1971}).
\end{remark}
\begin{proof}
Applying exactly the same argument as in \cite[Sect. 6.2, p. 122]{stein-weiss1971}, one can show that any function $G$, holomorphic on $T^{V}$, that fulfills the $L^{2}$ conditions
$$
\sup_{y\in V}\int_{\mathbb{R}^{n}} |G(x+iy)|^{2}dx<\infty \ \ \ \mbox{and} \  \  \ \underset{y\in V_1}{\lim_{y\to 0}} G(\: \cdot \:+iy)=\underset{y\in V_{2}}{\lim_{y\to 0}} G(\: \cdot \:+iy), \  \ \mbox{in } L^{2}(\mathbb{R}^{n}),
$$
admits a holomorphic extension $\tilde{G}$ to $T^{\operatorname*{ch}(V)}$. Find $r$ such that $|x|< r$ for all $x\in V$. Let $\lambda>r+1$ and set $Q_{\lambda}(z)=\Pi_{j=1}^{n}(z_{j}+i\lambda)^{N+2}$. The function $G(z)=F(z)/Q_{\lambda}(z)$ satisfies the above two $L^{2}$ conditions and so $\tilde{F}=Q_{\lambda} \tilde{G}$ is the desired holomorphic extension of $F$ to $T^{\operatorname*{ch}(V)}$. We first show (\ref{bveq13}) when $N=0$, which follows if we prove $\tilde{F}(T^{\operatorname*{ch}(V)})\subseteq F(T^{V}\cup \mathbb{R}^{n})$. Indeed, if $\zeta\in \tilde{F}(T^{\operatorname*{ch}(V)})\setminus F(T^{V}\cup\mathbb{R}^{n})$, then $J(z)=1/(F(z)-\zeta)$ would be continuous in $T^{V}\cup \mathbb{R}^{n}$ and holomorphic on $T^{V}$, but this would contradict the fact that $J$ must have a holomorphic extension to the tube $T^{\operatorname*{ch}(V)}$. For general $N$, take again $\lambda>r+1$ and define $F_\lambda (z)=F(z)/\Pi_{j=1}^{n}(z_{j}+i\lambda)^{N}$. Then, if $|F(x+iy)|\leq M(1+|x|^{2})^{N/2}$ for all $x+iy \in T^{V}$, we obtain that $\sup_{x+iy\in \operatorname*{ch}(V)}(1+|x|^{2})^{-nN/2} |\tilde{F}(x+iy)|\leq (\lambda +r)^{nN}\sup_{x+iy\in T^{\operatorname*{ch}(V)}} |\tilde{F}_\lambda (x+iy)|=  (\lambda +r)^{nN}\sup_{x+iy\in T^{V}} |F_\lambda (x+iy)|\leq M(\lambda+r)^{nN} $, which in turn implies the claimed inequality with $M_{N}=(2r+1)^{nN}$.
\end{proof}
We have the following $\mathcal{D}'_{E'_{\ast}}$ edge of the wedge theorem of Epstein type.
\begin{theorem}\label{bvth2} Let $V_1$ and $V_2$ be open connected bounded subsets of $\mathbb{R}^{n}$ with $0\in \partial V_{1}\cap \partial V_{2}$. Set $V=V_{1}\cup V_{2}$. If $F_{1}\in \mathcal{O}^{\:b}_{\mathcal{D}'_{E'_{\ast}}}(T^{V_{1}})$ and $F_{2}\in \mathcal{O}^{\:b}_{\mathcal{D}'_{E'_{\ast}}}(T^{V_{1}})$ have distributional boundary values on $\mathbb{R}^{n}$ and
$$
\underset{y\in V_{1}}{\lim_{y\to 0}}\: F_{1}(\:\cdot\:+iy)=\underset{y\in V_{2}}{\lim_{y\to 0}}\: F_{2}(\:\cdot\:+iy)\  \  \  \mbox{ weakly in }\mathcal{D}'_{E'_{\ast}},
$$
then, there is $F\in \mathcal{O}^{\:b}_{\mathcal{D}'_{E'_{\ast}}}(T^{\operatorname*{ch}(V)})$ such that $F(z)=F_{j}(z)$ for $z\in T^{V_{j}}$, $j=1,2$.
\end{theorem}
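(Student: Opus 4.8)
The plan is to reduce the statement to the structural decomposition provided by Theorem~\ref{th structure} together with the scalar extension Lemma~\ref{bvl2}, and then to reassemble the pieces into an element of $\mathcal{O}^{\:b}_{\mathcal{D}'_{E'_{\ast}}}(T^{\operatorname*{ch}(V)})$. First I would invoke Corollary~\ref{bvc2}: since each $F_{j}\in\mathcal{O}^{\:b}_{\mathcal{D}'_{E'_{\ast}}}(T^{V_{j}})$, the $E'_{\ast}$-valued maps $\mathbf{F}_{j}(x+iy)=T_{x}(F_{j}(\:\cdot\:+iy))$ are holomorphic on $T^{V_{j}}$, and there is a common $\kappa\geq 0$ with $\sup_{y\in V_{j}}(d_{V_{j}}(y))^{\kappa}(1+d_{V_{j}}(y))^{-\tau}\|F_{j}(\:\cdot\:+iy)\|_{E'}<\infty$. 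The idea is then to apply Theorem~\ref{th structure} to the bounded sets $\{F_{j}(\:\cdot\:+iy)\}$ to write each $F_{j}=\sum_{|\alpha|\leq N}\partial^{\alpha}_{z}F_{j,\alpha}$ with $F_{j,\alpha}(z)=F_{j}(\:\cdot\:+iy)\ast\check{\varrho}_{\alpha}$ for \emph{the same} compactly supported $\varrho_{\alpha}\in E$. Here one must take care that the mollifiers can be chosen uniformly: since the union $\{F_{1}(\:\cdot\:+iy)\}\cup\{F_{2}(\:\cdot\:+iy)\}$ is a single bounded subset of $\mathcal{D}'_{E'_{\ast}}$, Theorem~\ref{th structure} supplies $\varrho_{\alpha}$ depending only on this set, so both $F_{1}$ and $F_{2}$ share the order $N$ and the same $\varrho_{\alpha}$.

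Next I would pass to the scalar components. Each $F_{j,\alpha}$ is holomorphic and continuous up to $\mathbb{R}^{n}$ (by the pointwise estimate $|F_{j,\alpha}(x+iy)|\leq M\|\varrho_{\alpha}\|_{E}\,\omega(x)(d_{V_{j}}(y))^{-\kappa}$ of the type in \eqref{bveq011} and Corollary~\ref{bvc1}), and on $\mathbb{R}^{n}$ it satisfies a polynomial bound $|F_{j,\alpha}(x+iy)|\leq M'(1+|x|)^{\tau}$ coming from $\omega$. The weak boundary-value hypothesis, after convolving with $\check\varrho_{\alpha}$, forces $\lim_{y\in V_{1}\to0}F_{1,\alpha}(\:\cdot\:+iy)=\lim_{y\in V_{2}\to0}F_{2,\alpha}(\:\cdot\:+iy)$ in $E'$, and in particular pointwise on $\mathbb{R}^{n}$ by \eqref{estimateconv1}. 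Thus the functions $F_{1,\alpha}$ and $F_{2,\alpha}$ agree on $\mathbb{R}^{n}$ and glue to a single function $F_{\alpha}$ holomorphic on $T^{V}=T^{V_1\cup V_2}$, continuous on $T^{V}\cup\mathbb{R}^{n}$, with the polynomial majorant required by Lemma~\ref{bvl2}. Applying Lemma~\ref{bvl2} (and Remark~\ref{repsteinlemma}, since $V_{1},V_{2}$ need not be cones but one may shrink to truncated cones, or simply use the continuous-extension conclusion) yields a holomorphic extension $\tilde{F}_{\alpha}$ to $T^{\operatorname*{ch}(V)}$ satisfying the polynomial bound \eqref{bveq13}.

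The final step is to verify that $F:=\sum_{|\alpha|\leq N}\partial^{\alpha}_{z}\tilde{F}_{\alpha}$ lies in $\mathcal{O}^{\:b}_{\mathcal{D}'_{E'_{\ast}}}(T^{\operatorname*{ch}(V)})$ and restricts to $F_{j}$ on each $T^{V_{j}}$. The restriction property is automatic since $\tilde F_\alpha$ extends $F_{j,\alpha}$ and the decomposition \eqref{bveq08} holds on each $T^{V_{j}}$; by analytic continuation $F=F_{j}$ on $T^{V_{j}}$. To get the membership I would show that for $y$ in any compact $K\subset\operatorname*{ch}(V)$ the distribution $F(\:\cdot\:+iy)$ is bounded in $\mathcal{D}'_{E'_{\ast}}$: by Theorem~\ref{th structure}$(ii)$ it suffices to bound $\|F(\:\cdot\:+iy)\ast\psi\|_{E'}$ for each $\psi\in\mathcal{S}(\mathbb{R}^{n})$, and this follows because the polynomial growth of $\tilde F_\alpha(\:\cdot\:+iy)$ guarantees it lies in $E'$ (indeed in $\mathcal{S}'$) with locally uniform control, while convolution with $\ps$ together with the multiplier structure of $\mathcal{O}'_{C}$ keeps the $E'$-norms bounded on $K$. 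I expect the main obstacle to be precisely this last verification: Lemma~\ref{bvl2} only returns a \emph{scalar} polynomial bound on $\tilde F_\alpha$, not an $E'$-valued holomorphy or norm estimate on the extended region $T^{\operatorname*{ch}(V)}$. Recovering the $E'_{\ast}$-valued structure on the enlarged base — i.e.\ showing $\tilde F_\alpha(\:\cdot\:+iy)\in E'$ with locally bounded norm for $y\in\operatorname*{ch}(V)\setminus V$, where no a priori estimate was assumed — is the delicate point, and I would handle it by reconstructing the $E'$-bound from the Cauchy-integral representation of $\tilde F_\alpha$ over a polydisc contained in $T^{V}$ (where the $E'$-norms are controlled), mimicking the mechanism of Lemma~\ref{bvl1} to transfer norm control from $V$ into its convex hull.
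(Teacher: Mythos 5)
Your overall skeleton is the same as the paper's: decompose both functions via Theorem \ref{th structure} (applied to the union of the two bounded sets, so the same $N$ and the same $\varrho_{\alpha}$ serve for $F_{1}$ and $F_{2}$), glue the continuous components across $\mathbb{R}^{n}$, and extend the scalar functions with Lemma \ref{bvl2}. You have also correctly isolated the genuine difficulty: Lemma \ref{bvl2} returns only a scalar, polynomially bounded extension, and nothing assumed so far gives $F(\:\cdot\:+iy)\in E'$, let alone uniform norm bounds, for $y\in\operatorname*{ch}(V)\setminus V$. The problem is that your proposed repair of this gap does not work. A Cauchy-integral or power-series representation of $\tilde F_{\alpha}$ over a polydisc contained in $T^{V}$ represents the function only \emph{inside} that polydisc; the mechanism of Lemma \ref{bvl1} transfers norm control between points of the tube where the $E'$-bounds are already hypothesized, and the admissible radii are capped by $d_{V}(y)/\sqrt{n}$, so no chain of Cauchy estimates based in $T^{V}$ ever reaches a point of $T^{\operatorname*{ch}(V)}$ outside $T^{V}$. (If such an argument worked, it would also prove the scalar edge-of-the-wedge extension itself, bypassing the real content of Lemma \ref{bvl2}.) So the last step of your proof is missing, not merely delicate.

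The paper closes this gap by a duality argument that exploits the fact, stated explicitly in Lemma \ref{bvl2}, that the constant $M_{N}$ in (\ref{bveq13}) does not depend on the function. Take any $\varphi\in\mathcal{D}(\mathbb{R}^{n})$ with $\|\varphi\|_{E}\leq 1$ and set $G(z)=\int_{\mathbb{R}^{n}}F(t+z)\varphi(t)\,dt$, where $F$ is the scalar extension already produced. Then $G$ is holomorphic on all of $T^{\operatorname*{ch}(V)}$, its restriction to $T^{V}$ extends continuously to $T^{V}\cup\mathbb{R}^{n}$, and $|G(x+iy)|=|\langle F(\:\cdot\:+iy),T_{-x}\varphi\rangle|\leq M\omega(x)\leq M(1+|x|^{2})^{\tau/2}$ on $T^{V}$, because $\|T_{-x}\varphi\|_{E}\leq\omega(x)$ and $\|F_{j}(\:\cdot\:+iy)\|_{E'}\leq M$ there. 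Applying Lemma \ref{bvl2} to $G|_{T^{V}}$ and invoking uniqueness of analytic continuation, the bound (\ref{bveq13}) holds for $G$ on $T^{\operatorname*{ch}(V)}$; evaluating at $x=0$ gives $|\langle F(\:\cdot\:+iy),\varphi\rangle|=|G(iy)|\leq MM_{\tau}$ for every $y\in\operatorname*{ch}(V)$, uniformly in $\varphi$. Since $\mathcal{D}(\mathbb{R}^{n})$ is dense in $E$, this yields $\sup_{y\in\operatorname*{ch}(V)}\|F(\:\cdot\:+iy)\|_{E'}\leq MM_{\tau}$, which is exactly the $E'$-estimate your Cauchy-integral idea cannot recover; boundedness of $\{F(\:\cdot\:+iy):y\in\operatorname*{ch}(V)\}$ in $\mathcal{D}'_{E'_{\ast}}$ then follows. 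Replacing your final paragraph by this pairing argument turns your proposal into a correct proof.
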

\begin{remark} The existence of the limits $\lim_{y\in V_{j}\to 0}\: F_{j}(\:\cdot\:+iy)$ in $\mathcal{D}'_{E'_{\ast}}$, $j=1,2$, is part of the assumptions of Theorem \ref{bvth2}; however, as already mentioned, if $V_{1}$ and $V_{2}$ are truncated cones, such limits automatically exist and in particular $F(\:\cdot\:+iy)$ converges strongly in $\mathcal{D}'_{E'_{\ast}}$ to the common limit as $y\in \operatorname*{ch}(V)$ tends to $0$.
\end{remark}
\begin{proof}
Reasoning as in the proof of Theorem \ref{thbvw} (via Theorem \ref{sequences}), we may assume that $F_{j}$ have continuous extensions to $T^{V_{j}}\cup \mathbb{R}^{n}$ with $F_1(x)=F_{2}(x)$ for $x\in\mathbb{R}^{n}$ and that there is $M$ such that $\|F_{j}(\:\cdot\:+y)\|_{E'}\leq M $ and $|F_{j}(x+iy)|\leq \tilde{M}\omega(x)\leq M(1+|x|^{2})^{\tau/2}$ for $x+iy\in T^{V_{j}}$, $j=1,2$. The pointwise estimate and Lemma \ref{bvl2} imply the existence of $F$, holomorphic in $T^{\operatorname*{ch}(V)}$, such that $F(z)=F_{j}(z)$ for $z\in T^{V_{j}}$, $j=1,2$. It is remains to show that $F(\:\cdot\:+iy)\in E'$ for every $y\in \operatorname*{ch}(V)$ and $\{F(\:\cdot\:+iy):\:y\in \operatorname*{ch}(V)\}$ is bounded in $\mathcal{D}'_{E'_\ast}$. Let $\varphi\in \mathcal{D}(\mathbb{R}^{n})$  with $\|\varphi\|_{E}\leq 1$. Set $G(z):=\int_{\mathbb{R}^{n}}F(t+z)\varphi(t)dt$, $z\in T^{\operatorname*{ch}(V)}$. Then the restriction of $G$ to $T^{V}$ extends continuously to $T^{V}\cup \mathbb{R}^{n}$ and $|G(x+iy)|\leq M (1+|x|^{2})^{\tau/2}$ for $x+iy\in T^{V}$. The inequality (\ref{bveq13}) from Lemma \ref{bvl2} gives $|G(x+iy)|\leq M M_{\tau} (1+|x|^{2})^{n\tau/2}$ for $x+iy\in T^{\operatorname*{ch}(V)}$; in particular $|G(iy)|\leq MM_{\tau}$ for all $y\in \operatorname*{ch}(V)$. Since $\varphi$ is arbitrary and $\mathcal{D}(\mathbb{R}^{n})\hookrightarrow E$, we obtain that $\sup_{y\in \operatorname*{ch}(V)}\|F(\:\cdot\:+iy)\|_{E'}\leq M M_{\tau}$.

\end{proof}

In particular, we have the ensuing corollary on analytic continuation; the case $C_{2}=-C_{1}$ is an edge of the wedge theorem of Bogoliubov type.

\begin{corollary}
\label{bvc4} Let $C_{1}$ and $C_{2}$ be open cones such that $\operatorname*{int}(C_{1}^{\ast})\cap \operatorname*{int}(C_{2}^{\ast})=\emptyset$ and let $r_1,r_2>0$. Set $V=C_1(r_{1})\cup C_{2}(r_2)$. If $F_{j}\in \mathcal{O}^{\: b}_{\mathcal{D}'_{E'_{\ast}}}(T^{C_j(r_j)})$, $j=1,2$, are such that
$$
\underset{y\in C_{1}}{\lim_{y\to 0}}\: F_{1}(\:\cdot\:+iy)=\underset{y\in C_{2}}{\lim_{y\to 0}}\: F_{2}(\:\cdot\:+iy)\  \  \  \mbox{ in }\mathcal{D}'_{E'_{\ast}},
$$
then $F_{1}$ and $F_{2}$ can be glued together as a holomorphic function through $\mathbb{R}^{n}$; more precisely, the domain $T^{\operatorname*{ch}(V)}$ of their holomorphic extension $F \in \mathcal{O}^{\: b}_{\mathcal{D}'_{E'_{\ast}}}(T^{\operatorname*{ch}(V)})$ contains a tube $\mathbb{R}^{n}+i\{y\in\mathbb{R}^{n}:\: |y|< r\}$.
\end{corollary}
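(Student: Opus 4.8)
The plan is to deduce the statement directly from the Epstein-type theorem (Theorem~\ref{bvth2}), which already carries all the analytic content, and then to supply an elementary convex-geometric argument showing that the base $\operatorname{ch}(V)$ of the resulting tube is a neighbourhood of the origin.

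First I would set $V_1=C_1(r_1)$ and $V_2=C_2(r_2)$ and check that Theorem~\ref{bvth2} applies. Each truncated cone is open and bounded and has the origin on its boundary, so $0\in\partial V_1\cap\partial V_2$; connectedness holds for the convex cones relevant here (for a general open cone one first passes to its connected components). Since $F_j\in\mathcal{O}^{\:b}_{\mathcal{D}'_{E'_{\ast}}}(T^{C_j(r_j)})$ and each $C_j(r_j)$ is a truncated cone, the remark following Definition~\ref{defa1} (via Theorem~\ref{thbvw}) already guarantees that $\lim_{y\in C_j\to 0}F_j(\:\cdot\:+iy)$ exists strongly in $\mathcal{D}'_{E'_{\ast}}$, and the hypothesis says that these two boundary values coincide, hence in particular coincide weakly. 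Thus Theorem~\ref{bvth2} produces a function $F\in\mathcal{O}^{\:b}_{\mathcal{D}'_{E'_{\ast}}}(T^{\operatorname{ch}(V)})$ with $F(z)=F_j(z)$ for $z\in T^{V_j}$, $j=1,2$; this is exactly the asserted gluing of $F_1$ and $F_2$ across $\mathbb{R}^n$.

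It then remains to prove the geometric claim that $\operatorname{ch}(V)\supseteq\{y:\,|y|<r\}$ for some $r>0$, equivalently that $0\in\operatorname*{int}(\operatorname{ch}(V))$; note that $\operatorname{ch}(V)$ is open, being the convex hull of an open set. I would argue by contradiction using the supporting hyperplane theorem: if $0\notin\operatorname*{int}(\operatorname{ch}(V))$, there is $\xi\neq 0$ with $\langle\xi,y\rangle\geq 0$ for all $y\in\operatorname{ch}(V)\supseteq V$. For any $y\in C_1$ the scaled vectors $ty$ lie in $C_1(r_1)\subseteq V$ for small $t>0$, whence $\langle\xi,y\rangle\geq 0$; the same runs over $C_2$. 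Therefore $\xi\in(C_1^{\ast}\cap C_2^{\ast})\setminus\{0\}$. The assumption on the conjugate cones is precisely what excludes such a common direction, forcing $C_1^{\ast}\cap C_2^{\ast}=\{0\}$ — equivalently, that $C_1\cup C_2$ generates $\mathbb{R}^n$ as a convex cone — and giving the contradiction. With $0\in\operatorname*{int}(\operatorname{ch}(V))$ established, taking $r=d_{\operatorname{ch}(V)}(0)>0$ yields $\mathbb{R}^n+i\{y:\,|y|<r\}\subseteq T^{\operatorname{ch}(V)}$, as required.

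The genuinely hard analytic work — existence, $E'$-boundedness, and strong convergence of the extended family in $\mathcal{D}'_{E'_{\ast}}$ — is entirely delegated to Theorem~\ref{bvth2}, so no further manipulation of convolutions or $E'$-norm estimates is needed. The only point that demands care is thus the convex-geometric step: converting the condition on the interiors of $C_1^{\ast}$ and $C_2^{\ast}$ into the non-existence of a nonzero functional that is nonnegative on both cones, which is exactly what places the origin in the interior of $\operatorname{ch}(V)$. I expect this to be the main (though elementary) obstacle and the place where the cone hypothesis enters decisively; the case $C_2=-C_1$ of the Bogoliubov theorem is the prototypical situation in which it manifestly holds, since there any $\xi\in C_1^{\ast}\cap(-C_1^{\ast})$ is orthogonal to the spanning cone $C_1$ and hence vanishes.
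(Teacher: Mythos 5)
Your outline---delegating the analytic content to Theorem~\ref{bvth2} and finishing with convex geometry---is exactly the paper's strategy, and the first half of your geometric step is sound: if $0\notin\operatorname{ch}(V)$ (which is open, being the convex hull of an open set), separation yields $\xi\neq0$ with $\xi\cdot y\geq0$ on $\operatorname{ch}(V)$, and scaling into the truncated cones gives $\xi\in C_1^{\ast}\cap C_2^{\ast}\setminus\{0\}$. The genuine gap is the concluding claim that the hypothesis $\operatorname{int}(C_{1}^{\ast})\cap \operatorname{int}(C_{2}^{\ast})=\emptyset$ ``forces $C_1^{\ast}\cap C_2^{\ast}=\{0\}$''. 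It does not. Since $\operatorname{int}(A)\cap\operatorname{int}(B)=\operatorname{int}(A\cap B)$ for arbitrary sets, the hypothesis says only that the closed convex cone $C_1^{\ast}\cap C_2^{\ast}=\bigl(\operatorname{ch}(C_1\cup C_2)\bigr)^{\ast}$ has empty interior; it may still be a nonzero lower-dimensional cone. Concretely, take in $\mathbb{R}^{2}$ the open first and second quadrants $C_1=\{y_1>0,\ y_2>0\}$ and $C_2=\{y_1<0,\ y_2>0\}$: then $C_1^{\ast}$ and $C_2^{\ast}$ are the closed first and second quadrants, whose interiors are disjoint, yet $C_1^{\ast}\cap C_2^{\ast}$ is the ray $\{\xi_1=0,\ \xi_2\geq0\}$ and $\operatorname{ch}(C_1\cup C_2)$ is the upper half-plane $\{y_2>0\}$, which contains no ball around the origin. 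So the contradiction you aim for never materializes.

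Moreover, the gap cannot be patched under the literal hypothesis, because the corollary is then false: with the quadrants above and $E=L^{1}_{\eta}$, $\eta(x)=1+|x|$, the restrictions of $F(z)=\log z_2$ (principal branch, holomorphic for $\Im z_2>0$) to $T^{C_1(r_1)}$ and $T^{C_2(r_2)}$ belong to $\mathcal{O}^{\:b}_{\mathcal{D}'_{E'_{\ast}}}$ (the estimate $\|\log(\:\cdot\:+iy)\|_{E'}\lesssim 1+\log(1/y_2)$ together with Theorem~\ref{thbvw} gives the required boundedness) and have the same boundary value $\log(x_2+i0)$, but the branch point at $z_2=0$ excludes any single-valued holomorphic extension to a tube $\mathbb{R}^{2}+i\{|y|<r\}$. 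In fairness, the paper's own one-line proof has the very same defect: it asserts that the condition makes $\operatorname{ch}(C_1\cup C_2)$ ``contain a line, and therefore the origin as interior point'', but an open convex cone may contain affine lines without containing the origin---again the upper half-plane. What the stated hypothesis actually expresses is that $\operatorname{ch}(C_1\cup C_2)$ is not acute. Both your argument and the paper's become correct once the hypothesis is strengthened to $C_1^{\ast}\cap C_2^{\ast}=\{0\}$, equivalently $\operatorname{ch}(C_1\cup C_2)=\mathbb{R}^{n}$; this stronger condition does hold in the Bogoliubov case $C_2=-C_1$ (your closing remark proves exactly that), and under it your supporting-hyperplane argument completes the proof verbatim.
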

\begin{proof} The condition implies that the cone $C=\operatorname*{ch}(C_1\cup C_2)$ contains a line, and therefore the origin as interior point.
\end{proof}

Our next result is an edge of the wedge theorem of Martineau type \cite{martineau1995,Morimoto}, it is related to the classes of holomorphic functions on wedges introduced in the next definition.

\begin{definition}
\label{defa2} Let $C$ be an open convex cone and $a\geq 0$.
\begin{itemize}
\item [$(i)$] If $C\neq \mathbb{R}^{n}$, we define
$
\mathcal{O}_{\mathcal{D}'_{E'_{\ast}}}^{\:a,\:\textnormal{exp}}(T^{C})$ as the space of all holomorphic functions $F\in\mathcal{O}_{E'}(T^{C})$ such that there is $\kappa\geq 0$ such that  for every $\varepsilon>0$
$$\sup_{y\in C}e^{-(a+\varepsilon)|y|}\left(1+\frac{1}{d_{C}(y)}\right)^{-\kappa}\|F(\:\cdot\:+iy)\|_{E'}<\infty.$$
\item [$(ii)$] When $C=\mathbb{R}^{n}$, the space $\mathcal{O}_{\mathcal{D}'_{E'_{\ast}}}^{\:a,\:\textnormal{exp}}(\mathbb{C}^{n})$ consists of all $F\in\mathcal{O}_{E'}(\mathbb{C}^{n})$ such that for every $\varepsilon>0$
$$
\sup_{y\in \mathbb{R}^{n}}e^{-(a+\varepsilon)|y|}\|F(\:\cdot\:+iy)\|_{E'}<\infty.
$$
We also use the notation $\mathcal{O}_{E'}^{\:a,\:\textnormal{exp}}(\mathbb{C}^{n}):=\mathcal{O}_{\mathcal{D}'_{E'_{\ast}}}^{\:a,\:\textnormal{exp}}(\mathbb{C}^{n})$ for this space.
\end{itemize}
\end{definition}
\smallskip

Observe that Remark \ref{remarkemb1} and Theorem \ref{th structure} allow us to conclude that $\mathcal{O}_{\mathcal{D}'_{E'_{\ast}}}^{\:a,\: \textnormal{exp}}(T^{C})\subseteq \mathcal{O}_{\mathcal{B}'_{\omega}}^{\:a,\:\textnormal{exp}}(T^{C})$. In particular, every element of $\mathcal{O}_{E'}^{\:a,\:\textnormal{exp}}(\mathbb{C}^{n})$ is actually an entire function of exponential type.

According to Theorem \ref{analyticc} (cf. Corollary \ref{bvc6} for the case $C=\mathbb{R}^{n}$), every element $F\in \mathcal{O}^{\textnormal{a,exp}}_{\mathcal{D}_{E'_\ast}}(T^{C})$ is completely determined by its boundary value distribution, which we denote by $\operatorname*{bv} (F):=\lim_{y\in C\to 0} F(\:\cdot\:+iy)\in\mathcal{D}'_{E'_{\ast}}$. In the next theorem each $C_{j}$ is an open convex cone. Note that it considerably improves earlier results by Carmichael \cite{carmichael85}.

\begin{theorem}\label{arth3} Let $F_{j}\in \mathcal{O}^{\:a,\:\textnormal{exp}}_{\mathcal{D}'_{E'_{\ast}}}(T^{C_{j}})$, $j=1,2,\dots,k$, and let $\varepsilon>0$. Set $C_{j,\nu}=\operatorname*{ch}(C_{j}\cup C_{\nu})$ and $\widetilde{C}_{j}=\bigcap_{\nu\neq j}C_{j,\nu}$. If $\sum_{j=1}^{k}\operatorname*{bv} (F_{j})=0$, then for each $j$ there are $G_{j,\nu}\in \mathcal{O}^{\:a+\varepsilon,\:\textnormal{exp}}_{\mathcal{D}'_{E'_{\ast}}}(T^{C_{j,\nu}})$ such that $F_{j}=\sum_{\nu=1}^{k}G_{j,\nu}$. In particular, each $F_{j}$ has a holomorphic extension that belongs to $\mathcal{O}^{\:a+\varepsilon,\:\textnormal{exp}}_{\mathcal{D}'_{E'_{\ast}}}(T^{\widetilde{C}_{j}})$. The $G_{j,\nu}$ may be chosen such that $G_{\nu,j}=-G_{j,\nu}$.
\end{theorem}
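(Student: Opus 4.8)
The plan is to transplant the problem to the Fourier side and to realize the decomposition through an antisymmetric partition of unity in the frequency variable. First I would invoke Theorem \ref{analyticc} (together with Corollary \ref{bvc6} when some $C_j=\mathbb{R}^n$) to write each $F_j(z)=(2\pi)^{-n}\mathcal{L}\{\hat f_j;z\}$, where $f_j=\operatorname*{bv}(F_j)\in\mathcal{D}'_{E'_{\ast}}$ and $\operatorname*{supp}\hat f_j\subseteq C_j^\ast+\overline{B}(a)$. Passing to the $E'_{\ast}$-valued picture, I set $\mathbf{f}_j=\iota(f_j)$, so that $\hat{\mathbf{f}}_j\in\mathcal{S}'(\mathbb{R}^n,E'_{\ast})$ is supported in $C_j^\ast+\overline{B}(a)$, and the hypothesis $\sum_j\operatorname*{bv}(F_j)=0$ together with injectivity of the Fourier transform yields $\sum_j\hat{\mathbf{f}}_j=0$. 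Recalling that $C_{j,\nu}^{\ast}=(\operatorname{ch}(C_j\cup C_\nu))^{\ast}=C_j^\ast\cap C_\nu^\ast$, the target membership $G_{j,\nu}\in\mathcal{O}^{\:a+\varepsilon,\:\textnormal{exp}}_{\mathcal{D}'_{E'_{\ast}}}(T^{C_{j,\nu}})$ will, by the forward implication of Theorem \ref{analyticc}, follow automatically once $G_{j,\nu}$ is produced as the Laplace transform of an element of $\mathcal{D}'_{E'_{\ast}}$ whose Fourier transform is supported in $(C_j^\ast\cap C_\nu^\ast)+\overline{B}(a+\varepsilon)$.

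The decomposition itself rests on a classical trick. I would construct frequency multipliers $\chi_1,\dots,\chi_k\in\mathcal{O}_M(\mathbb{R}^n)$, homogeneous of degree $0$ outside a ball, with $\sum_\nu\chi_\nu\equiv 1$ on a neighbourhood of $\bigcup_\ell(C_\ell^\ast+\overline{B}(a))$ and with $\operatorname*{supp}\chi_\nu$ contained in a suitably thin conic neighbourhood of $C_\nu^\ast$; these are obtained by a construction analogous to that of the multipliers in Lemma \ref{arl1}. Since each $\chi_\nu$ is an $\mathcal{O}_M$-multiplier, multiplication by $\chi_\nu$ on the Fourier side corresponds to convolution with an element of $\mathcal{O}'_C(\mathbb{R}^n)$, so by the $\mathcal{O}'_C$-convolution closedness of $\mathcal{D}'_{E'_{\ast}}$ (\cite[Prop. 11]{dpv}) each inverse transform $\mathcal{F}^{-1}(\chi_\nu\hat{\mathbf{f}}_j)$ again lies in the range of $\iota$. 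I then set
$$
\hat g_{j,\nu}=\chi_\nu\hat{\mathbf{f}}_j-\chi_j\hat{\mathbf{f}}_\nu .
$$
By construction $\hat g_{\nu,j}=-\hat g_{j,\nu}$, and using $\sum_\nu\chi_\nu=1$ on $\operatorname*{supp}\hat{\mathbf{f}}_j$ together with $\sum_\nu\hat{\mathbf{f}}_\nu=0$ the sum telescopes to $\sum_\nu\hat g_{j,\nu}=\hat{\mathbf{f}}_j$. Defining $G_{j,\nu}(z)=(2\pi)^{-n}\mathcal{L}\{\hat g_{j,\nu};z\}$ and transforming back (Theorem \ref{sequences}), I obtain $F_j=\sum_\nu G_{j,\nu}$ with the stated antisymmetry. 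The final ``in particular'' assertion is then immediate, since $\widetilde{C}_j=\bigcap_{\nu\neq j}C_{j,\nu}\subseteq C_{j,\nu}$ forces each $G_{j,\nu}$ to restrict to $T^{\widetilde{C}_j}$, and the finite sum inherits the $\mathcal{O}^{\:a+\varepsilon,\:\textnormal{exp}}_{\mathcal{D}'_{E'_{\ast}}}$ estimate there.

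The main obstacle is the support bookkeeping, namely the containment
$$
\operatorname*{supp}(\chi_\nu\hat{\mathbf{f}}_j)\subseteq\operatorname*{supp}\chi_\nu\cap\bigl(C_j^\ast+\overline{B}(a)\bigr)\subseteq\bigl(C_j^\ast\cap C_\nu^\ast\bigr)+\overline{B}(a+\varepsilon),
$$
which is exactly what yields $\operatorname*{supp}\hat g_{j,\nu}\subseteq C_{j,\nu}^{\ast}+\overline{B}(a+\varepsilon)$ and thereby produces the gain from exponential type $a$ to type $a+\varepsilon$ on the enlarged cone. This is a statement about the convex geometry of the polar cones: a point lying in a sufficiently thin conic neighbourhood of $C_\nu^\ast$ and within Euclidean distance $a$ of $C_j^\ast$ must lie within distance $a+\varepsilon$ of $C_j^\ast\cap C_\nu^\ast$. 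I expect the delicate point to be calibrating the aperture of the conic neighbourhoods $\operatorname*{supp}\chi_\nu$ (and their necessary fattening near the origin, which is what keeps $\sum_\nu\chi_\nu=1$ on the bounded part of $\bigcup_\ell(C_\ell^\ast+\overline{B}(a))$) uniformly over the finitely many pairs $(j,\nu)$, so that the additive loss in the supports is controlled by the prescribed $\varepsilon$ rather than by a constant multiple of $a$ depending on the mutual position of the cones. Once this geometric lemma is established, the remaining steps are the routine transform bookkeeping outlined above.
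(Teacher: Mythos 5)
Your strategy is essentially the one the paper itself uses: pass to the Fourier side, where $\operatorname{supp}\hat f_j\subseteq C_j^{\ast}+\overline{B}(a)$, split each $\hat f_j$ by smooth cutoffs attached to the dual cones, and return via the Laplace transform. The paper takes cutoffs $\rho_j$ equal to $1$ on $\operatorname{supp}\hat f_j$ with $\operatorname{supp}\rho_j\subseteq C_j^{\ast}+B(a+\varepsilon)$, writes $\hat f_j=-\sum_{\nu\neq j}\rho_j\hat f_\nu$, and obtains the antisymmetry afterwards by an induction on $k$; your formula $\hat g_{j,\nu}=\chi_\nu\hat{\mathbf f}_j-\chi_j\hat{\mathbf f}_\nu$ builds the telescoping and the antisymmetry in from the start, which is genuinely tidier. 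The surrounding bookkeeping in your sketch is also fine: $C_{j,\nu}^{\ast}=C_j^{\ast}\cap C_\nu^{\ast}$, stability of $\mathcal{D}'_{E'_{\ast}}$ under $\mathcal{O}'_C$-convolution, and $G_{j,j}=0$ yielding the extension to $T^{\widetilde C_j}$.

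The gap is the support inclusion you explicitly postponed, and it is not a matter of calibrating apertures: that inclusion is false, and the loss it tries to suppress is intrinsically multiplicative, governed by the mutual position of the dual cones, never additive. Whatever the $\chi_\nu$ are, you need $\sum_\nu\chi_\nu=1$ on $\operatorname{supp}\hat{\mathbf f}_j$; for $k=2$ one has $\hat{\mathbf f}_2=-\hat{\mathbf f}_1$, so this support sits inside $(C_1^{\ast}+\overline{B}(a))\cap(C_2^{\ast}+\overline{B}(a))$, and points of that set can be far from $C_1^{\ast}\cap C_2^{\ast}$. Concretely, in $\mathbb{R}^2$ take $E=L^1$ (so $\mathcal{D}'_{E'_{\ast}}=\mathcal{B}'$), $C_1=\{y:\,y_1>0,\ |y_2|<\delta y_1\}$, $C_2=-C_1$; then $C_1^{\ast}=\{\xi:\,\xi_1\geq\delta|\xi_2|\}$, $C_1^{\ast}\cap C_2^{\ast}=\{0\}$, $C_{1,2}=\mathbb{R}^2$, and the point $\xi_0=(0,a/\delta)$ lies at distance $a/\sqrt{1+\delta^2}<a$ from both $C_1^{\ast}$ and $C_2^{\ast}$. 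The functions $F_1(z)=e^{i\xi_0\cdot z}$ and $F_2=-F_1$ satisfy all hypotheses of Theorem \ref{arth3} with exponent $a$ (indeed $\|F_j(\,\cdot\,+iy)\|_{L^{\infty}}=e^{-y\cdot\xi_0}\leq e^{a|y|}$ on $C_j$, and the boundary values cancel), yet $F_1$ is entire of exponential type $|\xi_0|=a/\delta$, so for $\delta<a/(a+2\varepsilon)$ it belongs to no $\mathcal{O}^{\:a+\varepsilon,\:\textnormal{exp}}_{\mathcal{D}'_{E'_{\ast}}}(T^{C_{1,2}})=\mathcal{O}^{\:a+\varepsilon,\:\textnormal{exp}}_{E'}(\mathbb{C}^2)$. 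Since antisymmetry forces $G_{1,1}=0$ and hence $F_1=G_{1,2}$, no cutoff scheme (yours or any other) can produce the asserted decomposition; the ``in particular'' extension clause fails for the same reason. You should also know that the paper's own proof commits exactly the step you flagged: it asserts without justification that $\mathcal{L}\{-\rho_jg_\nu;z\}$ lies in $\mathcal{O}^{\:a+\varepsilon,\:\textnormal{exp}}_{\mathcal{D}'_{E'_{\ast}}}(T^{C_{j,\nu}})$, which needs the same false inclusion $(C_j^{\ast}+\overline{B}(a+\varepsilon))\cap(C_\nu^{\ast}+\overline{B}(a))\subseteq(C_j^{\ast}\cap C_\nu^{\ast})+\overline{B}(a+\varepsilon)$. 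So you have isolated a genuine defect rather than a routine lemma; but as it stands your proposal does not prove the statement, and indeed no proof can: any correct version must replace $a+\varepsilon$ by a constant depending on the geometry of the cones, and for dual cones tangent along a ray in dimension $n\geq3$ (e.g.\ two circular cones touching along a common boundary ray) no finite constant exists at all.
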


\begin{proof} If some of the $C_{\nu}$ are $\mathbb{R}^{n}$, the corresponding terms in the sum can be absorbed into others. We may therefore assume that all $C_{1},\dots,C_{k}$ are open convex cones with $C_{\nu}\neq \mathbb{R}^{n}$. We can find $g_{\nu}$ such that $F_{\nu}(z)=\mathcal{L}\{g_{\nu};z\}$, with $\operatorname*{supp}g_{\nu}\subset C_{\nu}^{\ast}+\overline{B}(a)$ and $\hat{g}_{\nu}\in\mathcal{D}'_{E'_\ast}$. Find $\rho_{\nu}$ with bounded partial derivatives of any order such that $\operatorname*{supp}\rho_{\nu}\subseteq C_{\nu}^{\ast}+B(a+\varepsilon)$ and $\rho_{\nu}(x)=1$ for $x\in\operatorname*{supp}g_{\nu}$. Then, $g_{j}=-\sum_{\nu\neq j} \rho_{j}g_{\nu}$. Setting $G_{j,\nu}\in\mathcal{O}^{\:a+\varepsilon,\:\textnormal{exp}}_{\mathcal{D}'_{E'_{\ast}}}(T^{C_{j,\nu}})$ as the Laplace transform of $-\rho_{j}g_{\nu}$, we obtain $F_{j}=\sum_{\nu\neq j}G_{j,\nu}$. It remains to be shown that the $G_{j,\nu}$ may be chosen such that $G_{\nu,j}=-G_{j,\nu}$.
We proceed by induction over the number of summands. The cases $k=1,2$ are trivial. Assume that such a choice is possible for $k$. If $\sum_{j=1}^{k+1}\operatorname*{bv} (F_{j})=0$, from what we have shown we can write $F_{k+1}=\sum_{j=1}^{k}G_{k+1,\nu}$ where $G_{k+1,\nu}\in \mathcal{O}^{\:a+\varepsilon,\:\textnormal{exp}}_{\mathcal{D}'_{E'_{\ast}}}(T^{C_{k+1,\nu}})$. Thus, $\sum_{j=1}^{k}\operatorname*{bv} (G_{k+1,\nu}+F_{\nu})=0$. By the inductive hypothesis, we get that there are $G_{j,\nu}\in \mathcal{O}_{\mathcal{D}'_{E'_{\ast}}}^{\:a+\varepsilon,\:\textnormal{exp}}(T^{C_{j,\nu}})$ such that $G_{j,\nu}=-G_{\nu,j}$, $1\leq j,\nu\leq k$, and $F_{j}+G_{k+1,j}=\sum_{\nu=1}^{k}G_{j,\nu}$. The property is then satisfied if we define $G_{j,k+1}:=-G_{k+1,j}$ and $G_{k+1,k+1}=0$.
\end{proof}

\section{Analytic representation of $\mathcal{D}'_{E'_{\ast}}$ -- boundary value isomorphism}
\label{boundary isomorphism}
We have shown in Section \ref{analytic representations} that every element of $\mathcal{D}'_{E'_{\ast}}$ admits a representation as a sum of boundary values of holomorphic functions. We now give in Subsection \ref{hyperfunctions} an isomorphism between $\mathcal D'_{E'_*}$ and a quotient space of analytic functions in the spirit of the hyperfunction theory (see \cite{Morimoto,Kaneko} for hyperfunctions). Such an isomorphism will be accomplished through our version of Martineau's edge of the wedge theorem (Theorem \ref{arth3}). The results are then specialized to the one-dimensional case in Subsection \ref{one-dimensional case}.

\subsection{The boundary value isomorphism $\mathcal{D}'_{E'_{\ast}}\cong\mathcal{D}b_{E'}^{\textnormal{ exp}}(\mathbb R^n)$}\label{hyperfunctions}

We will use our results to represent the space $\mathcal{D}'_{E'_{\ast}}$ as a quotient space of analytic functions. We introduce suitable spaces of analytic functions. For an open convex cone $C$, we set (cf. Definition \ref{defa2})
$$
\mathcal{O}_{\mathcal{D}'_{E'_{\ast}}}^{\textnormal{ exp}}(T^{C})=\bigcup_{a\geq0} \mathcal{O}_{\mathcal{D}'_{E'_{\ast}}}^{\:a,\:\textnormal{exp}}(T^{C}).
$$
We also use the notation
$
\mathcal{O}_{E'}^{\textnormal{ exp}}(\mathbb{C}^{n})=\mathcal{O}_{\mathcal{D}'_{E'_{\ast}}}^{\textnormal{ exp}}(\mathbb{C}^{n})$.

We consider $\bigoplus_{C}{\mathcal {O}}_{\mathcal{D}_{E'_\ast}}^{\textnormal{ exp}}(T^{C}),
$
where $C$ runs over all open convex cones of $\mathbb{R}^{n}$ (so that $\mathcal{O}_{E'}^{\textnormal{ exp}}(\mathbb{C}^{n})$ is a term of the direct sum), and its subspace $\mathcal{N}_{\mathcal{D}_{E'_\ast}}^{\textnormal{ exp}}$ generated by all elements of the form
$F_{1}+F_2-F_3$, where $F_j\in {\mathcal {O}}_{\mathcal{D}'_{E'}}^{\textnormal{ exp}}(T^{C_{j}})$, $j=1,2,3,$ are such that
$C_3\subseteq C_1\cap C_2$, and $F_1(z)+F_2(z)=F_3(z)$ for $z\in C_{3}$. We remark that some of the three functions may be identically zero. Next, we
define the quotient vector space
$$\mathcal{D}b_{E'}^{\textnormal{ exp}}(\mathbb{R}^{n})=\left(\bigoplus_{C}{\mathcal {O}}_{\mathcal{D}_{E'_\ast}}^{\textnormal{ exp}}(T^{C})\right)/\mathcal{N}^{\textnormal{ exp}}_{\mathcal{D}_{E'_\ast}}.$$
The equivalence class of $\sum_{j=1}^{k}F_{j}\in \bigoplus_{C}\mathcal{O}_{\mathcal{D}_{E'_\ast}}^{\textnormal{ exp}}(T^{C})$ is denoted by $[\sum_{j=1}^{k}F_{j}]= \sum_{j=1}^{k}[F_{j}]$.

The mappings $\operatorname*{bv}:\mathcal{O}^{\textnormal{ exp}}_{\mathcal{D}_{E'_\ast}}(T^{C})\to \mathcal{D}'_{E'_{\ast}}$ clearly induce a well-defined boundary value mapping
\begin{equation}
\label{bvmeq}
\operatorname*{bv}:\mathcal{D}b_{E'}^{\textnormal{ exp}}(\mathbb{R}^{n})\to \mathcal{D}'_{E'_{\ast}},
\end{equation}
namely,
$\operatorname*{bv}(\sum_{j=1}^{k}[F_{j}])= \sum_{j=1}^{k}\operatorname*{bv} (F_{j}) \in \mathcal{D}'_{E'_{\ast}}$. Combining our previous results, we obtain:

\begin{theorem}\label{arth2} The boundary value mapping $\operatorname*{bv}:\mathcal{D}b_{E'}^{\textnormal{ exp}}(\mathbb{R}^{n})\to \mathcal{D}'_{E'_{\ast}}$ is a bijection.
\end{theorem}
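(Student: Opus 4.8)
The plan is to prove the two directions separately, drawing surjectivity from the analytic representation theorem (Theorem \ref{arth1}) and injectivity from the Martineau-type edge of the wedge theorem (Theorem \ref{arth3}). The essential preliminary step, which makes the quotient tractable, is to record two elementary manipulations permitted by the generating relations of $\mathcal{N}^{\textnormal{ exp}}_{\mathcal{D}_{E'_\ast}}$. First, if $F\in\mathcal{O}^{\textnormal{ exp}}_{\mathcal{D}_{E'_\ast}}(T^{C})$ and $C'\subseteq C$, then $F|_{T^{C'}}$ again lies in the corresponding class and $[F]=[F|_{T^{C'}}]$: apply the relation to the triple $(F,0,F|_{T^{C'}})$ with $C_{3}=C'\subseteq C\cap C$. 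The membership $F|_{T^{C'}}\in\mathcal{O}^{\textnormal{ exp}}_{\mathcal{D}_{E'_\ast}}(T^{C'})$ is immediate because $d_{C'}(y)\leq d_{C}(y)$ for $y\in C'$, so the defining exponential estimate only improves upon restriction. Second, for two functions $H_1,H_2$ defined on the \emph{same} cone $C$ one has $[H_1+H_2]=[H_1]+[H_2]$: apply the relation to $(H_1,H_2,H_1+H_2)$ with $C_3=C$.

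For surjectivity, take $f\in\mathcal{D}'_{E'_{\ast}}$. Theorem \ref{arth1} furnishes open convex cones $C_1,\dots,C_m$ with $\mathbb{R}^n=\bigcup_j C_j^{\ast}$ and holomorphic functions $F_j(z)=(2\pi)^{-n}\mathcal{L}\{\hat f_j;z\}$ on $T^{C_j}$ with $f=\sum_{j}\operatorname*{bv}(F_j)$. By Theorem \ref{analyticc} each $F_j$ obeys the bound (\ref{bveq014}), and dividing that estimate by $e^{(a+\varepsilon)|y|}(1+1/d_{C_j}(y))^{k}$ shows at once that $F_j\in\mathcal{O}^{\:a,\:\textnormal{exp}}_{\mathcal{D}'_{E'_{\ast}}}(T^{C_j})\subseteq\mathcal{O}^{\textnormal{ exp}}_{\mathcal{D}_{E'_\ast}}(T^{C_j})$. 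Hence $\operatorname*{bv}\bigl(\sum_{j}[F_j]\bigr)=\sum_j\operatorname*{bv}(F_j)=f$, so the map (\ref{bvmeq}) is onto.

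For injectivity, suppose $\sum_{j=1}^{k}[F_j]$ lies in the kernel, i.e. $\sum_{j=1}^{k}\operatorname*{bv}(F_j)=0$ with $F_j\in\mathcal{O}^{\textnormal{ exp}}_{\mathcal{D}_{E'_\ast}}(T^{C_j})$; enlarging the exponential type, I may assume all $F_j$ lie in a common class $\mathcal{O}^{\:a,\:\textnormal{exp}}_{\mathcal{D}'_{E'_{\ast}}}$. Theorem \ref{arth3} then produces $G_{j,\nu}\in\mathcal{O}^{\:a+\varepsilon,\:\textnormal{exp}}_{\mathcal{D}'_{E'_{\ast}}}(T^{C_{j,\nu}})$, with $C_{j,\nu}=\operatorname*{ch}(C_j\cup C_\nu)\supseteq C_j$, such that $F_j=\sum_{\nu}G_{j,\nu}$ on $T^{C_j}$ and $G_{\nu,j}=-G_{j,\nu}$ (with $G_{j,j}=0$). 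Using the two manipulations above I would compute, for each fixed $j$, $[F_j]=\bigl[\sum_\nu G_{j,\nu}|_{T^{C_j}}\bigr]=\sum_\nu[G_{j,\nu}|_{T^{C_j}}]=\sum_\nu[G_{j,\nu}]$, the first equality holding since $F_j$ and $\sum_\nu G_{j,\nu}$ agree on $T^{C_j}$, the second by same-cone additivity, and the third by restriction invariance. Summing over $j$ gives $\sum_j[F_j]=\sum_{j,\nu}[G_{j,\nu}]$, and the antisymmetry $G_{\nu,j}=-G_{j,\nu}$ (together with $C_{\nu,j}=C_{j,\nu}$, so the paired terms live on the same cone) collapses the terms via same-cone additivity into $[G_{j,\nu}]+[G_{\nu,j}]=[G_{j,\nu}-G_{j,\nu}]=0$, while the diagonal terms vanish. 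Thus $\sum_j[F_j]=0$ and $\operatorname*{bv}$ is injective.

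The main obstacle is the injectivity, and within it the bookkeeping that converts the purely analytic decomposition $F_j=\sum_\nu G_{j,\nu}$ supplied by Theorem \ref{arth3} into an identity of equivalence classes. One must take care that the cones $C_{j,\nu}$ \emph{enlarge} $C_j$, so the decomposition a priori lives on different tubes; the two quotient manipulations (restriction invariance and same-cone additivity) are precisely what reconciles this with the relations generating $\mathcal{N}^{\textnormal{ exp}}_{\mathcal{D}_{E'_\ast}}$. Everything else reduces to statements already established.
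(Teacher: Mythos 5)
Your proof is correct and follows exactly the paper's route: surjectivity from Theorem \ref{arth1} and injectivity from the Martineau-type Theorem \ref{arth3}. The paper's own proof consists of just this two-line observation, so your additional bookkeeping (restriction invariance via the triple $(F,0,F|_{T^{C'}})$ and same-cone additivity, used to convert the decomposition $F_j=\sum_\nu G_{j,\nu}$ with $G_{\nu,j}=-G_{j,\nu}$ into the identity $\sum_j[F_j]=0$) simply makes explicit the details the authors left to the reader.
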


Indeed, that (\ref{bvmeq}) is surjective follows at once from Theorem \ref{arth1}, whereas the injectivity is a consequence of Theorem \ref{arth3}.

\subsection{The one-dimensional case} \label{one-dimensional case} Assume that the dimension $n=1$. The previous construction significantly simplifies if we take into account the natural orientation of the real line. Consider first
$$
\mathcal{O}_{\mathcal{D}'_{E'_{\ast}}}^{\textnormal{ exp}}(\mathbb{C}\setminus \mathbb{R})=\{F\in \mathcal{O}_{E'}(\mathbb{C}\setminus \mathbb{R}):\: F_{|\mathbb{R}\pm i(0,\infty)}\in \mathcal{O}_{\mathcal{D}'_{E'_{\ast}}}^{\textnormal{ exp}}(\mathbb{R}\pm i(0,\infty))\}.
$$
If we replace the boundary value mapping by a jump across $\mathbb{R}$ mapping, we obtain that
$$
\mathcal{D}'_{E'_{\ast}}\cong \mathcal{O}^{\textnormal{ exp}}_{\mathcal{D}'_{E'_{\ast}}}(\mathbb{C}\setminus \mathbb{R})/\mathcal{O}_{E'}^{\textnormal{ exp}}(\mathbb{C});
$$
the isomorphism being realized by the mapping $\mathcal{O}^{\textnormal{ exp}}_{\mathcal{D}'_{E'_{\ast}}}(\mathbb{C}\setminus \mathbb{R})/\mathcal{O}_{E'}^{\textnormal{ exp}}(\mathbb{C})\to \mathcal{D}'_{E'_{\ast}}$ given by $[F]\mapsto \lim_{y \to 0^{+}} (F(\:\cdot\:+iy)- F(\:\cdot\:-iy))$.

We may also give another version of the quotient representation. Let $\Omega$ be a neighborhood of the real line of the form $\Omega=\mathbb{R}+iI$, where $I$ is an open interval containing 0. Set
$$
\mathcal{O}_{\mathcal{D}'_{E'_{\ast}}}(\Omega\setminus \mathbb{R})=\{F\in \mathcal{O}_{E'}(\Omega\setminus \mathbb{R}):\: (\forall I'\Subset I)(\exists \kappa)( \sup_{y\in I'\setminus\{0\}}|y|^{\kappa}\|F(\:\cdot\:+iy)\|_{E'}<\infty )\}.
$$
Then, in view of Theorem \ref{thbvw} and the edge of the wedge theorem of Epstein type (Theorem \ref{bvth2}), the jump across $\mathbb{R}$ mapping produces the isomorphism
$$
\mathcal{D}'_{E'_{\ast}}\cong \mathcal{O}_{\mathcal{D}'_{E'_{\ast}}}(\Omega\setminus \mathbb{R})/\mathcal{O}_{E'}(\Omega).
$$

\section{Heat kernel characterization}\label{heat}

We now turn our attention to the characterization of elements $\mathcal{D}'_{E'_{\ast}}$ as boundary values of solutions to the heat equation on $\mathbb{R}^{n}\times (0,t_{0})$. Given $f\in\mathcal{D}'(\mathbb{R}^{n})$, we consider the Cauchy problem for the heat equation
\begin{equation}
\label{heateq1}
\partial_{t}U- \Delta_{x} U=0, \ \ \ (x,t)\in\mathbb{R}^{n}\times (0,t_{0}),
\end{equation}
with initial value
\begin{equation}
\label{heateq2}
\lim_{t\to0^{+}} U(\:\cdot\:,t)= f \ \ \ \mbox{in }\mathcal{D}'(\mathbb{R}^{n}).
\end{equation}
Observe that under certain bounds over $U$, such as \cite{C}
\begin{equation*}
|U(x,t)|\leq M \exp{\left(\left(\frac{a}{t}\right)^{\alpha}+ a|x|^{2}\right)} \ \ \ (0<\alpha<1, \ a>0),
\end{equation*}
one can ensure uniqueness of the solution $U$ and, in such a case, $U$ is determined via convolution with the heat kernel: $U(x,t)=(4\pi t)^{-n/2}\left\langle f(\xi),e^{-\frac{|\xi-x|^{2}}{4t}}\right\rangle$.

In order to move further, we need a characterization of $\mathcal{D}'_{E'_{\ast}}$ in terms of norm growth bounds on convolutions with an approximation of the unity. For it, we employ the useful concept of the $\phi-$transform \cite{DZ2,PV,PV2014,estrada-vindasI}, which is defined as follows. Let $\phi\in\mathcal{S}(\mathbb{R}^{n})$ be such that $\int_{\mathbb{R}^{n}}\phi(x) d x=1$. The $\phi-$transform of $f\in\mathcal{S}'(\mathbb{R}^{n})$ is the smooth function
$$
F_\phi f(x,t)=\left\langle  f(x+t\xi),\phi\left(\xi\right)\right\rangle= (f\ast \check{\phi}_{t})(x), \ \ \ (x,t)\in \mathbb{R}^n\times\mathbb{R}_+.
$$
\begin{theorem}\label{phi transform}
A distribution $f\in \mathcal{S}'(\mathbb{R}^n)$ belongs to $\mathcal{D}_{E'_{\ast}}'$ if and only if $F_{\phi}f(\:\cdot\:, t)\in E'$ for all $t\in (0,t_{0})$ and there are constants $k\in \mathbb{N}$ and $M>0$ such that
\begin{equation}
\label{eq11}
\|F_\phi f(\:\cdot\:,t)\|_{E'}\leq\frac{M}{t^k}, \ \ \ t\in(0,t_{0}).
\end{equation}
In such a case,
\begin{equation}
\label{eq12}
\lim_{t\to0^{+}}F_\phi f(\:\cdot\:,t)= f \ \ \ \mbox{strongly in }\mathcal{D}'_{E'_{\ast}}.
\end{equation}
\end{theorem}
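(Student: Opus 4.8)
The plan is to prove both directions through the convolution module structure of $E'_{\ast}$, taking Theorem~\ref{th structure} and Theorem~\ref{sequences} as the main tools; the genuine difficulty, as I explain below, is the converse implication, where $f$ must be reconstructed from the \emph{single} controlled family $\{f\ast\check\phi_t\}_{0<t<t_0}$.

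For the direct implication I would start from a structural representation. If $f\in\mathcal{D}'_{E'_\ast}$, Theorem~\ref{th structure} applied to $\{f\}$ yields $f=\sum_{|\alpha|\le N}f_\alpha^{(\alpha)}$ with $f_\alpha\in E'_\ast$. Since $F_\phi f(\cdot,t)=f\ast\check\phi_t=\sum_{|\alpha|\le N}f_\alpha\ast(\check\phi_t)^{(\alpha)}$ and $(\check\phi_t)^{(\alpha)}=(-1)^{|\alpha|}t^{-|\alpha|}\bigl(\check{\phi^{(\alpha)}}\bigr)_t$, the module estimate (\ref{estimateconv2}) gives $\|f_\alpha\ast(\check\phi_t)^{(\alpha)}\|_{E'}\le t^{-|\alpha|}\|f_\alpha\|_{E'}\,\|(\check{\phi^{(\alpha)}})_t\|_{1,\check\omega}$. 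Because $\omega$ is polynomially bounded by (\ref{eqw}), the norms $\|(\check{\phi^{(\alpha)}})_t\|_{1,\check\omega}$ stay bounded for $t\in(0,t_0)$; summing over $\alpha$ produces $F_\phi f(\cdot,t)\in E'_\ast$ and the bound (\ref{eq11}) with $k=N$. The limit (\ref{eq12}) then follows from the same representation: each $f_\alpha\ast\check\phi_t\to f_\alpha$ in $E'$ by (\ref{eqapprox}) (note $\int\check\phi=1$), and Theorem~\ref{sequences} upgrades this to strong convergence $f\ast\check\phi_t\to f$ in $\mathcal{D}'_{E'_\ast}$.

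For the converse, my aim would be to prove $\psi\ast f\in E'_\ast$ for every $\psi\in\mathcal{S}(\mathbb{R}^n)$, since Theorem~\ref{th structure}(ii) then forces $f\in\mathcal{D}'_{E'_\ast}$. The device is a Calder\'on-type reproducing formula adapted to $\phi$. Using $\hat\phi(0)=1$ (hence $\hat\phi\neq0$ near the origin), I would fix $\chi\in C_c^\infty$ equal to $1$ near $0$, set $\hat\zeta=\chi/\hat\phi(-t_0\,\cdot\,)$, and construct a synthesis kernel $\eta\in\mathcal{S}$ whose moments vanish up to order $k$ and which satisfies $\hat\phi(-t_0\xi)\hat\zeta(\xi)+\int_0^{t_0}\hat\phi(-t\xi)\hat\eta(-t\xi)\,\frac{dt}{t}=1$. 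Convolving the resulting identity for $\hat f$ with $\psi$ gives
\[
\psi\ast f=\psi\ast\zeta\ast(f\ast\check\phi_{t_0})+\int_0^{t_0}(f\ast\check\phi_t)\ast(\check\eta_t\ast\psi)\,\frac{dt}{t}.
\]
The first term lies in $L^1_{\check\omega}\ast E'=E'_\ast$. For the integral, the $k$ vanishing moments of $\eta$ together with a Taylor expansion of $\psi$ yield $\|\check\eta_t\ast\psi\|_{1,\check\omega}\le C t^{k+1}$; combined with (\ref{estimateconv2}) and the hypothesis (\ref{eq11}) this bounds the integrand by $MC\,t$ in $E'$, so $\int_0^{t_0}\|(f\ast\check\phi_t)\ast(\check\eta_t\ast\psi)\|_{E'}\,\frac{dt}{t}\le MC\,t_0<\infty$ and the Bochner integral converges absolutely. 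Since each integrand sits in the closed subspace $E'_\ast$, so does $\psi\ast f$, whence $f\in\mathcal{D}'_{E'_\ast}$; the convergence (\ref{eq12}) is then obtained exactly as in the direct part.

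I expect the construction of the reproducing kernel to be the main obstacle. One must produce a \emph{single} Schwartz function $\eta$, with prescribed vanishing moments, solving the above Mellin-type identity for a general $\phi$ with $\int\phi=1$; the possible zeros of $\hat\phi$ at nonzero frequencies are the delicate point, though they should ultimately be harmless because near $t=0$ one has $\hat\phi(-t\xi)\approx1$, so the low-frequency part is absorbed into the single evaluation $f\ast\check\phi_{t_0}$. The secondary technical points---uniform bounds on $\|(\check{\phi^{(\alpha)}})_t\|_{1,\check\omega}$, the $O(t^{k+1})$ estimate on $\|\check\eta_t\ast\psi\|_{1,\check\omega}$, and the Bochner integrability justifying the interchange of integral and convolution---are routine once the formula is in place.
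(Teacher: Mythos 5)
Your direct implication is essentially the paper's own argument: the same decomposition $f=\sum_{|\alpha|\le N}f_\alpha^{(\alpha)}$ from Theorem \ref{th structure}, the same module estimate (\ref{estimateconv2}), and (\ref{eqapprox}) for the limit (\ref{eq12}). (One small remark: the final clause of Theorem \ref{sequences} asks for convergence of the $f_{\alpha,t}=f_t\ast\check\varrho_\alpha$ in both $E'_{\ast}$ \emph{and} $L^{\infty}_{\omega}$; it is smoother to invoke the first clause instead, since $(f\ast\check\phi_t)\ast\psi=(f\ast\psi)\ast\check\phi_t\to f\ast\psi$ in $E'$ by (\ref{eqapprox}).) The converse is where you genuinely diverge. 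The paper does not reconstruct $f$ by hand: it regularizes ($\phi_1=\phi\ast\varphi$), passes to the vector-valued distribution $\mathbf{f}=\iota(f)$, verifies the class estimate $\|F_{\phi_1}\mathbf{f}(x,t)\|_{E'}\le\tilde M(1+|x|)^{\tau}t^{-k}$, and then cites the Tauberian theorem of Drozhzhinov--Zav'yalov \cite{DZ2} (cf. \cite[Sect. 7]{PV}), which states that precisely such an estimate characterizes $\mathbf{f}\in\mathcal{S}'(\mathbb{R}^n,E'_{\ast})$; the conclusion follows from (\ref{eqrange}). Your Calder\'on-formula argument replaces this external machinery with a self-contained reconstruction, at the price of having to build the kernel pair $(\zeta,\eta)$.

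That construction, which you flag as the main obstacle, does go through, and your intuition about the zeros of $\hat\phi$ is the correct one: one arranges matters so that division by $\hat\phi$ only ever happens near the origin. Since $\hat\phi(0)=1$, fix $\delta>0$ with $\hat\phi(-\zeta)\ne0$ for $|\zeta|\le\delta$. Work at a scale $t_1<t_0$ (your hypothesis covers only the open interval, so $f\ast\check\phi_{t_0}$ is not available; take $t_1=t_0/2$). Choose $\chi\in\mathcal{D}(\mathbb{R}^n)$ with $\chi=1$ on $\{|\xi|\le\delta/(4t_1)\}$ and support in $\{|\xi|\le\delta/(2t_1)\}$, and set $g(\zeta)=-\zeta\cdot\nabla_{\zeta}\left[\chi(\zeta/t_1)\right]$, supported in the annulus $\{\delta/4\le|\zeta|\le\delta/2\}$. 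From $g(t\xi)=-t\frac{d}{dt}\left[\chi(t\xi/t_1)\right]$ one gets $\int_{0}^{t_1}g(t\xi)\frac{dt}{t}=1-\chi(\xi)$ for every $\xi$. Now define $\hat\eta(-\zeta)=g(\zeta)/\hat\phi(-\zeta)$ and $\hat\zeta(\xi)=\chi(\xi)/\hat\phi(-t_1\xi)$: both divisions occur only inside $\{|\zeta|\le\delta\}$, so both are legitimate elements of $\mathcal{D}(\mathbb{R}^n)$, the high-frequency zeros of $\hat\phi$ are never touched, and $\hat\phi(-t\xi)\hat\eta(-t\xi)=g(t\xi)$ identically, giving your exact identity with $t_0$ replaced by $t_1$. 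Since $\hat\eta$ vanishes near the origin, \emph{all} moments of $\eta$ vanish, more than the $k$ you need; your remaining steps (the $O(t^{k+1})$ bound, the Bochner convergence in the closed subspace $E'_{\ast}=L^1_{\check\omega}\ast E'$, the reading of Theorem \ref{th structure}$(ii)$ for singletons) are correct as stated, the interchange of the $t$-integral with the pairing against $f$ being harmless because $\hat\eta(-t\xi)=0$ for $t|\xi|\le\delta/4$, so the $\mathcal{S}$-valued integrand is $O(t^{\infty})$ as $t\to0^{+}$. In sum: your proof is correct once these details are inserted, and it is more elementary than the paper's, whereas the paper's route is shorter but leans on the vector-valued Tauberian theory.
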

\begin{proof} The relation (\ref{eq12}) follows by combining $(iii)$ of Theorem \ref{th structure} with (\ref{eqapprox}).
Let $f\in \mathcal{D}_{E'_{\ast}}'$. Write $f$ as in (\ref{eq:representation}). By (\ref{estimateconv2}), for $t\in(0,t_{0}]$,

\begin{align*}
||F_{\phi}f(\:\cdot\:,t)||_{E'} &\leq \sum_{|\alpha|\leq N}|| f_{\alpha}\ast (\check{\phi}_{t})^{(\alpha)}||_{E'}\\
&
\leq \frac{M'}{t^{N}}\sum_{|\alpha|\leq N}|| f_{\alpha}||_{E'}\int_{\mathbb{R}^{n}}|\phi^{(\alpha)}(x)|\:\omega(tx)dx\leq \frac{M}{t^{N}}.
\end{align*}

Conversely, assume (\ref{eq11}).  Let $\varphi\in \mathcal{S}(\mathbb{R}^{n})$ be also such that $\int_{\mathbb{R}^{n}}\varphi(x)dx=1$. Setting $\phi_{1}=\phi \ast \varphi$, we have that $F_{\phi_{1}}f(x,t)=(F_{\phi}f(\:\cdot\:,t)\ast \check{\varphi}_{t})(x)$, and so $F_{\phi_{1}} f(\:\cdot\:,t)\in E'\ast \mathcal{S}(\mathbb{R}^{n}) \subset E'_{\ast}$ for each $t\in(0,t_{0})$. We will use the theory of (Tauberian) class estimates from \cite{DZ2,PV}. Set $\mathbf{f}=\iota (f)\in \mathcal{S}'(\mathbb{R}^{n},\mathcal{S}'(\mathbb{R}^{n}))$, where $\mathbf{f}$ is given by (\ref{eq:8}). By $(ii)$ of Theorem \ref{th structure} and (\ref{eqrange}), it is enough to show that $\mathbf{f}\in\mathcal{S}'(\mathbb{R}^{n},E'_{\ast})$ (cf. (\ref{embedding})). The $\mathcal{S}'(\mathbb{R}^{n})$-valued $\phi_{1}-$transform of $\mathbf{f}$ is the vector-valued distribution $F_{\phi_{1}}\mathbf{f}:\mathbb{R}^{n}\times \mathbb{R}_{+}\to \mathcal{S}'(\mathbb{R}^{n})$ given by
$
F_{\phi_1}\mathbf{f}(x,t)=T_{x}F_{\phi_{1}}f(\cdot,t)\in \mathcal{S}'(\mathbb{R}^{n}).
$ From what has been shown we have that $F_{\phi_1}\mathbf{f}(x,t)\in E'_{\ast}\subset \mathcal{S}'(\mathbb{R}^{n})$ for all $(x,t)\in \mathbb{R}^{n}\times (0,t_{0})$ and, by property $(d)$ applied to $E'_{\ast}$ (cf. Section \ref{the space E}), we get that the the mapping $\mathbb{R}^{n}\to E'_{\ast}$ given by $x\mapsto F_{\phi_1}\mathbf{f}(x,t)$ is continuous for each fixed $t\in(0,t_{0})$. Furthermore, using the fact that $E'_{\ast}$ is a Banach modulo over $L^{1}_{\check{\omega}}$, we conclude that
\begin{align*}
||F_{\phi_1}\mathbf{f}(x,t)||_{E'}&=||T_{x}F_{\phi_1}f(\cdot,t)||_{E'}\\
&
\leq \omega(x) ||F_{\phi}f(\: \cdot\:,t)||_{E'} \int_{\mathbb{R}^{n}}|\varphi(\xi)|\omega(t\xi)d\xi \leq \tilde{M}\frac{(1+|x|)^{\tau}}{t^{k}},
\end{align*}
for all $(x,t)\in \mathbb{R}^{n}\times (0,t_{0})$. But, as shown in \cite{DZ2} (see also \cite[Sect. 7]{PV}), the very last estimate is necessary and sufficient for $\mathbf{f}\in\mathcal{S}'(\mathbb{R}^{n},E'_{\ast})$. This completes the proof.
\end{proof}

After this preliminaries, we are ready to state and prove the heat kernel characterization of $\mathcal{D}'_{E'_{\ast}}$.

\begin{theorem} \label{heat th1} Let $f\in\mathcal{D}'(\mathbb{R}^{n})$. Then, $f\in\mathcal{D}'_{E'_{\ast}}$ if and only if there is a solution $U$ to the Cauchy problem (\ref{heateq1}) and (\ref{heateq2}) that satisfies
\begin{equation}
\label{heateq3}
U(\:\cdot\:,t)\in E' \ \ \ \mbox{for all } t\in(0,t_{0})
\end{equation}
and there are constants $M>$ and $k\geq 0$ such that
\begin{equation}
\label{heateq4}
||U(\:\cdot\:,t)||_{E'}\leq \frac{M}{t^{k}}, \ \ \ t\in(0,t_{0}).
\end{equation}
In such a case,
\begin{equation}
\label{heateq5}
\lim_{t\to0^{+}}U(\:\cdot\:,t)=f \ \ \ \mbox{strongly in } \mathcal{D}'_{E'_{\ast}}.
\end{equation}
\end{theorem}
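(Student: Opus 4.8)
The plan is to reduce everything to the $\phi$-transform characterization of Theorem \ref{phi transform} by choosing $\phi$ to be the Gaussian. Set $\phi(x)=(4\pi)^{-n/2}e^{-|x|^{2}/4}$, so that $\int_{\mathbb{R}^{n}}\phi=1$, $\check\phi=\phi$, and a direct computation gives $\check\phi_{\sqrt{t}}=\phi_{\sqrt{t}}=K_{t}$, where $K_{t}(x)=(4\pi t)^{-n/2}e^{-|x|^{2}/4t}$ is the heat kernel. Consequently, for any $g\in\mathcal{S}'(\mathbb{R}^{n})$ one has the identity $F_{\phi}g(x,\sqrt{t})=(g\ast K_{t})(x)$, which identifies the Gaussian $\phi$-transform (after the substitution $t\mapsto\sqrt{t}$) with the heat semigroup acting on $g$. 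This correspondence is the backbone of the whole argument.

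First I would treat the direct implication. Given $f\in\mathcal{D}'_{E'_{\ast}}\subset\mathcal{S}'(\mathbb{R}^{n})$, define $U(x,t)=(f\ast K_{t})(x)=F_{\phi}f(x,\sqrt{t})$. Since $K_{t}\in\mathcal{S}(\mathbb{R}^{n})$ solves the heat equation and convolution commutes with $\partial_{t}-\Delta_{x}$, the function $U$ is smooth and solves (\ref{heateq1}); the initial condition (\ref{heateq2}) and in fact the strong limit (\ref{heateq5}) follow from (\ref{eq12}) of Theorem \ref{phi transform}. Theorem \ref{phi transform} also yields $F_{\phi}f(\:\cdot\:,s)\in E'$ with $\|F_{\phi}f(\:\cdot\:,s)\|_{E'}\leq M/s^{k}$; substituting $s=\sqrt{t}$ gives (\ref{heateq3}) and the bound (\ref{heateq4}) with exponent $k/2$ (after a harmless rescaling of $t_{0}$).

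For the converse the strategy is to show that any solution $U$ satisfying (\ref{heateq3})--(\ref{heateq4}) must coincide with $f\ast K_{t}=F_{\phi}f(\:\cdot\:,\sqrt{t})$; once this identification is available, the bound (\ref{heateq4}) transfers to $\|F_{\phi}f(\:\cdot\:,s)\|_{E'}\leq M/s^{2k}$ and Theorem \ref{phi transform} immediately gives $f\in\mathcal{D}'_{E'_{\ast}}$ together with (\ref{heateq5}). The main obstacle is precisely this identification, i.e.\ a uniqueness statement for the Cauchy problem within the tempered class. I would establish it through the reproducing (semigroup) property $U(\:\cdot\:,t)=U(\:\cdot\:,s)\ast K_{t-s}$ for $0<s<t<t_{0}$. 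To prove it, note that the embedding $E'\hookrightarrow\mathcal{S}'(\mathbb{R}^{n})$ turns (\ref{heateq4}) into a bound $|\langle U(\:\cdot\:,t),\chi\rangle|\leq C t^{-k}\rho_{N}(\chi)$ in a fixed Schwartz seminorm $\rho_{N}$, so that $\{U(\:\cdot\:,t)\}$ is a tempered family in $\mathcal{S}'(\mathbb{R}^{n})$, bounded on compact $t$-subintervals. Taking the partial Fourier transform in $x$ converts (\ref{heateq1}) into the ordinary differential equation $\partial_{t}\hat{U}=-|\xi|^{2}\hat{U}$, whence $\hat{U}(\:\cdot\:,t)=\hat{U}(\:\cdot\:,s)\,e^{-|\xi|^{2}(t-s)}$ as tempered distributions; since $e^{-|\xi|^{2}(t-s)}\in\mathcal{O}_{M}(\mathbb{R}^{n})$ is a multiplier on $\mathcal{S}'(\mathbb{R}^{n})$ for $t>s$, this is exactly the reproducing property.

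Finally I would let $s\to 0^{+}$. The reproducing property shows that $\hat{U}(\:\cdot\:,t)\,e^{|\xi|^{2}t}$ is independent of $t$, and the hypothesis $U(\:\cdot\:,t)\to f$ in $\mathcal{D}'(\mathbb{R}^{n})$ together with the uniform tempered bound forces $f\in\mathcal{S}'(\mathbb{R}^{n})$ with $\hat{f}=\hat{U}(\:\cdot\:,t)e^{|\xi|^{2}t}$, and hence $U(\:\cdot\:,t)=f\ast K_{t}$; this is the standard heat-kernel representation of tempered distributions (Matsuzawa's characterization of $\mathcal{S}'$), for which the locally bounded tempered family supplies exactly the required growth. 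I expect the delicate point to be this passage to the boundary value: one must upgrade the given $\mathcal{D}'$-convergence to the $\mathcal{S}'$-level in order to make $f\ast K_{t}$ meaningful, which is where the uniform tempered bound coming from $E'\hookrightarrow\mathcal{S}'$ is essential. With $U(\:\cdot\:,t)=F_{\phi}f(\:\cdot\:,\sqrt{t})$ in hand, Theorem \ref{phi transform} closes the argument.
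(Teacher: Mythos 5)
Your forward implication coincides with the paper's own argument: take $\phi$ to be the Gaussian and read off (\ref{heateq3})--(\ref{heateq5}) from Theorem \ref{phi transform}. Your converse, however, departs from the paper (which decomposes $U$ via Theorem \ref{th structure} into pieces $U_{\alpha}=U(\:\cdot\:,t)\ast\check{\varrho}_{\alpha}$ enjoying pointwise bounds $|U_{\alpha}(x,t)|\leq M_{\alpha}(1+|x|)^{\tau}t^{-k}$, then applies Matsuzawa's characterization of $\mathcal{S}'$ \cite{M1990} and Chung's uniqueness theorem \cite{C} to each piece), and it is in your route that a genuine gap occurs. The semigroup identity $U(\:\cdot\:,t)=U(\:\cdot\:,s)\ast K_{t-s}$, $0<s<t<t_{0}$, is fine: your Fourier-side ODE argument can be made rigorous because $\{U(\:\cdot\:,t)\}$ is bounded in $E'$, hence equicontinuous in $\mathcal{S}'$, on compact subintervals of $(0,t_{0})$. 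The gap is the final step, where you assert that the $\mathcal{D}'$-convergence $U(\:\cdot\:,t)\to f$ together with ``the uniform tempered bound'' forces $f\in\mathcal{S}'$ and $U(\:\cdot\:,t)=f\ast K_{t}$. The only bound valid up to $t=0$ is $\|U(\:\cdot\:,t)\|_{E'}\leq Mt^{-k}$, which blows up as $t\to0^{+}$; the family $\{U(\:\cdot\:,t)\}_{t\in(0,t_{0})}$ is therefore \emph{not} equicontinuous in $\mathcal{S}'$ near $t=0$, and one cannot let $s\to0^{+}$ in $\langle U(\:\cdot\:,t),\varphi\rangle=\langle U(\:\cdot\:,s),K_{t-s}\ast\varphi\rangle$, since $K_{t-s}\ast\varphi$ is not compactly supported and the hypothesis (\ref{heateq2}) controls pairings only against $\mathcal{D}(\mathbb{R}^{n})$; a cutoff argument fails because the tail is estimated by $\|U(\:\cdot\:,s)\|_{E'}=O(s^{-k})$ times a quantity that does not tend to $0$ as $s\to0^{+}$. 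For the same reason your parenthetical appeal to Matsuzawa is misapplied as stated: his theorem needs a pointwise bound $|U(x,t)|\leq Ct^{-k'}(1+|x|)^{m}$ valid up to $t=0$, which is precisely what a family that is merely ``bounded on compact $t$-subintervals'' does not provide.

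The gap can be repaired within your own scheme, and this is worth recording because it yields a proof genuinely different from the paper's. From the semigroup identity and (\ref{heateq4}) one gets, for $0<t<t_{0}$,
\begin{equation*}
|U(x,t)|=\left|\left\langle U(\:\cdot\:,t/2),T_{-x}\check{K}_{t/2}\right\rangle\right|\leq \|U(\:\cdot\:,t/2)\|_{E'}\,\omega(x)\,\|K_{t/2}\|_{E}\leq C(1+|x|)^{\tau}t^{-k'},
\end{equation*}
using $\|T_{-x}\|_{L(E)}=\omega(x)\leq M'(1+|x|)^{\tau}$ and the continuity of $\mathcal{S}(\mathbb{R}^{n})\hookrightarrow E$ (the Schwartz seminorms of $K_{t/2}$ grow at most polynomially in $1/t$). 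This pointwise estimate is exactly the hypothesis of Matsuzawa's theorem \cite{M1990}, which produces $g\in\mathcal{S}'(\mathbb{R}^{n})$ with $U(\:\cdot\:,t)\to g$ in $\mathcal{S}'(\mathbb{R}^{n})$; uniqueness of limits in $\mathcal{D}'(\mathbb{R}^{n})$ then gives $f=g\in\mathcal{S}'(\mathbb{R}^{n})$. Once the convergence holds in $\mathcal{S}'$, Banach--Steinhaus supplies the equicontinuity needed to pass to the limit $s\to0^{+}$ in the semigroup identity, whence $U(\:\cdot\:,t)=f\ast K_{t}=F_{\phi}f(\:\cdot\:,\sqrt{t})$, and Theorem \ref{phi transform} concludes $f\in\mathcal{D}'_{E'_{\ast}}$ together with (\ref{heateq5}). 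With this insertion your argument is complete; it trades the paper's structure-theorem decomposition and Chung's uniqueness criterion for the Fourier-side semigroup property plus the displayed pointwise bound.
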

\begin{proof}
If $f\in \mathcal{D}_{E'_{\ast}}'$, then $U(x,t)=F_{\phi}f(x,\sqrt{t})$ with $\phi(\xi)= (4\pi)^{-n/2}e^{-|\xi|^{2}/4}$ satisfies (\ref{heateq1}), (\ref{heateq2}), and (\ref{heateq3})--(\ref{heateq5}), as follows from Theorem \ref{phi transform}. Conversely, assume that (\ref{heateq1}), (\ref{heateq2}), (\ref{heateq3}), and (\ref{heateq4}) hold for $U$. Applying Theorem \ref{th structure}, we conclude that $U$ can be written as
\begin{equation}
\label{heateq6}
U(x,t)=\sum_{|\alpha|\leq N} \partial ^{\alpha}_x U_{\alpha}(x,t), \  \  \  (x,t)\in \mathbb{R}^{n}\times(0,t_{0}),
\end{equation}
where each $U_{\alpha}$ has the form $U_{\alpha}(x,t)= (U(\:\cdot\:,t)\ast \check{\varrho}_{\alpha})(x)=\int_{\mathbb{R}^{n}}U(x+\xi,t)\varrho_{\alpha}(\xi)d\xi$, with $\varrho_{\alpha}\in E$ being compactly supported and continuous. Each $U_{\alpha}$ is also a solution to the heat equation on $\mathbb{R}^{n}\times(0,t_{0})$, and it satisfies $U_{\alpha}(\:\cdot\:,t)\in E'_{\ast}$ for all $t\in(0,t_{0})$, the $E'$-norm estimate
\begin{equation}
\label{heateq7}
\|U_{\alpha}(\:\cdot\:,t)\|_{E'}\leq \frac{M\|\varrho_{\alpha}\|_{1,\omega}}{t^{k}}, \ \ \  t\in(0,t_{0}),
\end{equation}
and the pointwise estimate
\begin{equation}
\label{heateq8}
|U_{\alpha}(x,t)|\leq M \|\varrho_{\alpha}\|_{E}\frac{\omega(x)}{t^{k}}\leq M_{\alpha}\frac{(1+|x|)^{\tau}}{t^{k}}, \  \  \  (x,t)\in \mathbb{R}^{n}\times (0,t_0).
\end{equation}
Using the pointwise estimate (\ref{heateq8}) and applying Matsuzawa's heat kernel characterization of $\mathcal{S}'(\mathbb{R}^{n})$ \cite{M1990}, one concludes the existence of $f_{\alpha}\in \mathcal{S}'(\mathbb{R}^{n})$ such that $\lim_{t\to0^{+}}U_{\alpha}(\:\cdot\:,t)=f_{\alpha}$ in $\mathcal{S}'(\mathbb{R}^{n})$, for each $|\alpha|\leq N$. The uniqueness criterion for solutions to the heat equation \cite{C} yields $U_{\alpha}(x,t)=F_{\phi}f_{\alpha}(x,\sqrt{t})$ with again $\phi(\xi)= (4\pi)^{-n/2}e^{-|\xi|^{2}/4}$. The $E'$-norm estimate (\ref{heateq7}) thus implies that each $f_{\alpha}\in \mathcal{D}'_{E'_{\ast}}$ (again by Theorem \ref{phi transform}). Finally, by (\ref{heateq6}), we get $f=\sum_{|\alpha|\leq N}f_{\alpha}^{(\alpha)}\in \mathcal{D}'_{E'_{\ast}}$.
\end{proof}

Theorem \ref{heat th1} is complemented by the ensuing result, whose proof was already given within that of Theorem \ref{heat th1}.

\begin{corollary} \label{heat c1} Let $U$ be a solution to the heat equation (\ref{heateq1}) that satisfies (\ref{heateq3}) and the estimate (\ref{heateq4}). Then, there is a distribution $f\in\mathcal{D}_{E'_{\ast}}'$ such that (\ref{heateq5}) holds. Moreover, $U$ is uniquely determined by $f$.
\end{corollary}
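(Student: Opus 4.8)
The plan is to recognize that this corollary is exactly the converse half of Theorem \ref{heat th1}, but with the initial-value hypothesis (\ref{heateq2}) deleted; the point I would stress is that the construction of the limit distribution $f$ performed in that proof never used (\ref{heateq2}), so it can be carried out verbatim under the weaker hypotheses (\ref{heateq1}), (\ref{heateq3}), and (\ref{heateq4}), producing (\ref{heateq5})---and thereby (\ref{heateq2})---as a conclusion. First I would pass to a bounded family. Although $\{U(\:\cdot\:,t)\}_{t\in(0,t_0)}$ is not bounded in $E'$ because of the factor $t^{-k}$ in (\ref{heateq4}), the rescaled family $\{t^{k}U(\:\cdot\:,t)\}_{t\in(0,t_0)}$ is bounded in $E'$ and hence in $\mathcal{D}'_{E'_\ast}$. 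Applying part $(iii)$ of Theorem \ref{th structure} to this single bounded set yields one integer $N$ and fixed compactly supported continuous functions $\varrho_\alpha\in E$, all independent of $t$, and dividing the resulting representation by $t^{k}$ gives the decomposition (\ref{heateq6}) with $U_\alpha(\:\cdot\:,t)=U(\:\cdot\:,t)\ast\check{\varrho}_\alpha$ obeying the uniform estimates (\ref{heateq7}) and (\ref{heateq8}).

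Next I would run the scalar heat-kernel machinery component by component. Each $U_\alpha$ solves (\ref{heateq1}) and satisfies the polynomial-in-$x$, $t^{-k}$ pointwise bound (\ref{heateq8}), which places it inside Matsuzawa's growth class for $\mathcal{S}'(\mathbb{R}^{n})$; thus \cite{M1990} supplies $f_\alpha\in\mathcal{S}'(\mathbb{R}^{n})$ with $U_\alpha(\:\cdot\:,t)\to f_\alpha$ in $\mathcal{S}'(\mathbb{R}^{n})$, and the uniqueness criterion \cite{C} gives the representation $U_\alpha(x,t)=F_\phi f_\alpha(x,\sqrt{t})$ for the Gaussian $\phi(\xi)=(4\pi)^{-n/2}e^{-|\xi|^{2}/4}$. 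The $E'$-norm bound (\ref{heateq7}) then feeds Theorem \ref{phi transform} to upgrade each $f_\alpha$ from $\mathcal{S}'(\mathbb{R}^{n})$ to $\mathcal{D}'_{E'_\ast}$. Setting $f=\sum_{|\alpha|\le N}f_\alpha^{(\alpha)}\in\mathcal{D}'_{E'_\ast}$ and reading off the strong convergence already contained in Theorem \ref{phi transform} (then differentiating, which is continuous on $\mathcal{D}'_{E'_\ast}$) yields (\ref{heateq5}).

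For the uniqueness clause I would argue by linearity. If $U$ and $\tilde{U}$ both satisfy (\ref{heateq1}), (\ref{heateq3}), (\ref{heateq4}) and have the same boundary value $f$, then $W=U-\tilde{U}$ solves the heat equation, satisfies an estimate of the form (\ref{heateq4}), and has $\lim_{t\to0^{+}}W(\:\cdot\:,t)=0$ in $\mathcal{D}'(\mathbb{R}^{n})$. Decomposing $W$ exactly as above produces solutions $W_\alpha=W(\:\cdot\:,t)\ast\check{\varrho}_\alpha$ lying in the uniqueness class of \cite{C} with vanishing distributional boundary value, whence $W_\alpha\equiv0$ and therefore $W\equiv0$; thus $U$ is determined by $f$.

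The step requiring the most care is the uniform decomposition: one must resist applying Theorem \ref{th structure} to the unbounded family $\{U(\:\cdot\:,t)\}$ and instead apply it to the normalized family $\{t^{k}U(\:\cdot\:,t)\}$, so that a single $N$ and a single system $\{\varrho_\alpha\}$ serve every $t$ simultaneously. A secondary, purely routine verification is that (\ref{heateq8}) indeed falls within the Gaussian uniqueness class of \cite{C}, since $t^{-k}=e^{k\log(1/t)}$ is dominated by $e^{(a/t)^{\alpha}}$ for each $0<\alpha<1$ and $(1+|x|)^{\tau}\le Ce^{a|x|^{2}}$. Everything else is a direct transcription of the converse argument in Theorem \ref{heat th1}.
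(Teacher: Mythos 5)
Your proposal is correct and follows essentially the same route as the paper, whose own proof of this corollary is simply the converse half of the proof of Theorem \ref{heat th1} (the decomposition $U_\alpha=U(\:\cdot\:,t)\ast\check{\varrho}_\alpha$ via Theorem \ref{th structure}, Matsuzawa's characterization of $\mathcal{S}'(\mathbb{R}^{n})$, Chung's uniqueness criterion \cite{C}, and Theorem \ref{phi transform}), a proof which indeed never uses the hypothesis (\ref{heateq2}); your rescaling to the bounded family $\{t^{k}U(\:\cdot\:,t)\}$ is exactly the right way to make the application of Theorem \ref{th structure} with $t$-independent $\varrho_\alpha$ precise. The only cosmetic difference is the uniqueness clause: you argue by linearity on the difference of two solutions, whereas the paper reads uniqueness directly off the identity $U_\alpha(x,t)=F_\phi f_\alpha(x,\sqrt{t})$, hence $U(x,t)=F_\phi f(x,\sqrt{t})$; both rest on the same criterion from \cite{C}.
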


We end this article with the following corollary, the proof is analogous to that of Corollary \ref{bvc5}.

\begin{corollary}
\label{heat c2} Let $X\subset \mathcal{S}'(\mathbb{R}^{n})$ be a Banach space for which the inclusion mapping $X\to \mathcal{S}'(\mathbb{R}^{n})$ is continuous. If $U$ is a solution to the heat equation (\ref{heateq1}) that satisfies $U(\:\cdot\:,t)\in X$ for every $t\in(0,t_{0})$ and the estimate $\sup_{t\in(0,t_{0})} t^{k}\|U(\:\cdot\:,t)\|_{X}<\infty$ for some $k\geq0$, then $\lim_{t\to0^{+}}U(\:\cdot\:,t)$ exists in $\mathcal{S}'(\mathbb{R}^{n})$.
\end{corollary}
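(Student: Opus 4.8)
The plan is to follow the reduction carried out in Corollary \ref{bvc3}, with Theorem \ref{thbvw} replaced by the heat kernel characterization, in the form of Corollary \ref{heat c1}. The strategy is to factor the abstract Banach space $X$ through one of the concrete translation-invariant Banach spaces $\mathcal{S}_{j_0}(\mathbb{R}^n)$ from the proof of Corollary \ref{bvc3}, and then apply Corollary \ref{heat c1} with $E=\mathcal{S}_{j_0}$.

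First I would recall that $\mathcal{S}'(\mathbb{R}^n)=\operatorname*{ind}\lim_{j\in\mathbb{N}}\mathcal{S}'_j(\mathbb{R}^n)$ is a regular inductive limit of Banach spaces, because the linking maps $\mathcal{S}_{j+1}(\mathbb{R}^n)\hookrightarrow\mathcal{S}_j(\mathbb{R}^n)$ are compact. Since the inclusion $X\to\mathcal{S}'(\mathbb{R}^n)$ is continuous, the image of the closed unit ball of $X$ is bounded in $\mathcal{S}'(\mathbb{R}^n)$, and by regularity this bounded set is contained in, and bounded in, a single step $\mathcal{S}'_{j_0}(\mathbb{R}^n)$. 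This produces constants $M_1>0$ and $j_0\in\mathbb{N}$ with $\|g\|_{\mathcal{S}'_{j_0}}\leq M_1\|g\|_X$ for all $g\in X$, exactly as in Corollary \ref{bvc3}.

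Next I would transfer the hypotheses to $E=\mathcal{S}_{j_0}(\mathbb{R}^n)$, so that $E'=\mathcal{S}'_{j_0}(\mathbb{R}^n)$. From $U(\cdot,t)\in X$ and the estimate above we get $U(\cdot,t)\in E'$ for every $t\in(0,t_0)$, and the assumption $\sup_{t\in(0,t_0)}t^{k}\|U(\cdot,t)\|_X=:M<\infty$ turns into $\|U(\cdot,t)\|_{E'}\leq M_1 M/t^{k}$. These are precisely conditions (\ref{heateq3}) and (\ref{heateq4}) for this choice of $E$. Applying Corollary \ref{heat c1} then yields a distribution $f\in\mathcal{D}'_{E'_{\ast}}$ with $U(\cdot,t)\to f$ strongly in $\mathcal{D}'_{E'_{\ast}}$ as $t\to0^{+}$; since $\mathcal{D}'_{E'_{\ast}}\to\mathcal{S}'(\mathbb{R}^n)$ continuously by (\ref{eqemb2}), the limit $\lim_{t\to0^{+}}U(\cdot,t)$ exists in $\mathcal{S}'(\mathbb{R}^n)$, as claimed.

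The only genuinely delicate point is the factorization step that produces $j_0$ and $M_1$; this, however, is a standard consequence of the regularity of the (Silva-type) inductive limit defining $\mathcal{S}'(\mathbb{R}^n)$ and is identical to the corresponding argument in Corollary \ref{bvc3}. Everything else is a direct substitution into Corollary \ref{heat c1}, so I anticipate no further obstacles.
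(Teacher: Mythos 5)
Your proposal is correct and follows essentially the same route as the paper: the paper declares this corollary's proof analogous to that of Corollary \ref{bvc5}, which in turn reduces to the argument of Corollary \ref{bvc3}, i.e., factoring $X$ through a step $\mathcal{S}'_{j_0}(\mathbb{R}^{n})$ of the regular inductive limit $\mathcal{S}'(\mathbb{R}^{n})=\operatorname*{ind}\lim_{j}\mathcal{S}'_{j}(\mathbb{R}^{n})$ and then invoking the heat kernel result with $E'=\mathcal{S}'_{j_0}(\mathbb{R}^{n})$. Your use of Corollary \ref{heat c1} together with the continuous inclusion $\mathcal{D}'_{E'_{\ast}}\to\mathcal{S}'(\mathbb{R}^{n})$ is exactly the intended substitution.
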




\end{document}